\numberwithin{equation}{section} \setlength{\evensidemargin}{+0.30in}
\newtheorem{thm}{Theorem}[section]
\newtheorem{lemma}[thm]{Lemma}
\newtheorem{pro}[thm]{Proposition}
\newtheorem{rem}[thm]{Remark}
 \newcommand{\be}{\begin{equation}}
\newcommand{\ee}{\end{equation}}
\newcommand{\bea}{\begin{eqnarray*}}
\newcommand{\eea}{\end{eqnarray*}}
\newcommand{\mR}{\mathbb{R}}
\newcommand{\mC}{\mathbb{C}}
\newcommand{\mE}{\mathbb{E}}
\newcommand{\mP}{\mathbb{P}}
\newcommand{\y}{\widetilde{Y}}
\newcommand{\p}{\partial}
\newcommand*{\ackname}{Acknowledgement}
\newcommand{\Rmnum}[1]{\expandafter\@slowromancap\romannumeral #1@}
\def\bs{\boldsymbol}
\def\mbf{\mathbf}
\def\tr{\mbox{tr}}
\newenvironment{keyword}{\smallskip\noindent{\bf Keywords.}
                          \hskip\labelsep}{}
\newenvironment{classification}{\smallskip\noindent{\textbf{AMS subject classification:}}
                                 \hskip\labelsep}{}
\title{\textbf {Limiting spectral distribution of renormalized separable
sample covariance matrices when $p/n\to 0$}}
 \author{Lili Wang$^\dag$\footnote{correspondence author: Department of Mathematics,
Zhejiang University, Hangzhou, 310027, China. E-mail: sta.lesleywang@gmail.com; Phone: (+86)13857138955; $\ddag$ Department of Statistics,
University of California, Davis,
Davis, CA 95616. E-mail: debpaul@ucdavis.edu.} ~and Debashis Paul$^\ddag$\\
{\small $\dag$ Zhejiang University and University of California, Davis;} \\
{\small $\ddag$ University of California, Davis}}
\begin{document}
\date{}
\maketitle

\begin{abstract}
We are concerned with the behavior of the eigenvalues of renormalized sample
covariance matrices of the form
$$
C_n=\sqrt{\frac{n}{p}}\left(\frac{1}{n}A_{p}^{1/2}X_{n}B_{n}X_{n}^{*}A_{p}^{1/2}-\frac{1}{n}\tr(B_{n})A_{p}\right)
$$
as $p,n\to \infty$ and $p/n\to 0$, where $X_{n}$ is a $p\times n$ matrix with
i.i.d. real or complex valued entries $X_{ij}$ satisfying $E(X_{ij})=0$,
$E|X_{ij}|^2=1$ and having finite fourth moment. $A_{p}^{1/2}$ is a square-root
of the nonnegative definite Hermitian matrix $A_{p}$, and $B_{n}$ is an
$n\times n$ nonnegative definite Hermitian matrix. We show that the empirical
spectral distribution (ESD) of $C_n$ converges a.s. to a nonrandom limiting
distribution under the assumption that the ESD of $A_{p}$ converges to a
distribution $F^A$ that is not degenerate at zero, and that the first and
second spectral moments of $B_n$ converge. The probability density function of the LSD of $C_{n}$ is derived and it is
shown that it depends on the LSD of $A_{p}$ and the limiting value of
$n^{-1}\tr(B_{n}^2)$. We propose a computational algorithm for evaluating this
limiting density when the LSD of $A_{p}$ is a mixture of point masses. In
addition, when the entries of $X_{n}$ are sub-Gaussian, we derive the limiting
empirical distribution of $\{\sqrt{n/p}(\lambda_j(S_n) - n^{-1}\tr(B_n)
\lambda_j(A_{p}))\}_{j=1}^p$ where $S_n := n^{-1}
A_{p}^{1/2}X_{n}B_{n}X_{n}^{*}A_{p}^{1/2}$ is the sample covariance matrix and
$\lambda_j$ denotes the $j$-th largest eigenvalue, when $F^A$ is a finite
mixture of point masses. These results are utilized to propose a test for the
covariance structure of the data where the null hypothesis is that the joint
covariance matrix is of the form $A_{p} \otimes B_n$ for $\otimes$ denoting the Kronecker product, as well as $A_{p}$ and the first two spectral moments of
$B_n$ are specified. The performance of this test is illustrated through a simulation study.
\end{abstract}
\begin{keyword}
Separable covariance; limiting spectral distribution; Stieltjes transform;
McDiarmid's inequality; Lindeberg principle, Wielandt's inequality.
\end{keyword}

\begin{classification}
60B20, 62E20, 60F05, 60F15, 62H99
\end{classification}
\newpage

%%%------------------------------------------------------------------------
\section{Introduction}\label{sec:intro}

In this paper, we obtain the limiting spectral distribution (LSD) and a system
of equations describing the corresponding Stieltjes transforms of renormalized
sample covariance matrices of the form
\begin{equation}\label{eq:C_n}
C_n=\sqrt{\frac{n}{p}}\left(\frac{1}{n}A_{p}^{1/2}X_{n}B_{n}X_{n}^{*}A_{p}^{1/2}-\frac{1}{n}\tr(B_{n})A_{p}\right)
\end{equation}
when $p,n\to \infty$ and $p/n \to 0$, where $X_{n}$ has i.i.d. real or complex
entries with zero mean, unit variance and uniformly bounded fourth moment.
Throughout this paper, for any matrix $M$, we use $M^{*}$ to  denote the
complex conjugate transpose of $M$ if $M$ is complex-valued and transpose of
$M$ if $M$ is real-valued. When $p/n\to c\in (0,+\infty)$ as $n\to \infty$, the
spectral properties of the separable sample covariance matrices, $S_{n} :=
n^{-1} A_{p}^{1/2}X_{n}B_{n}X_{n}^{*}A_{p}^{1/2}$  have been widely
investigated under different assumptions on entries (e.g., Zhang \cite{ZPhd},
Paul and Silverstein \cite{PS}, EL Karoui \cite{Karoui}). The name
``separable'' refers to the fact that the covariance matrix of the vectorized
data matrix $Y_n = A_{p}^{1/2} X_n B_n^{1/2}$ has the separable covariance
$A_{p} \otimes B_n$, where $\otimes$ denotes the Kronecker product between
matrices. Under those circumstances, it is known that the spectral norm of
$S_{n}-\mE S_{n}$ does not converge to zero. However, if $p/n\to 0$, $\parallel
S_{n}-\mE S_{n}\parallel \stackrel{a.s.}{\longrightarrow} 0$. When
$A_{p}=I_{p}, B_{n}=I_{n}$ and $p,n\to \infty$ such that $ p/n\to 0$, the
behavior of empirical spectral distribution (ESD) of $\sqrt{n/p}\left(S_{n}-\mE
S_{n}\right) =\sqrt{n/p}(n^{-1} X_{n}X_{n}^{*}-I_p)$  is similar to
that of a $p\times p$ Wigner matrix $W_{p}$, which has been verified by Bai and
Yin \cite{BY}. Moreover, when $S_{n} =
n^{-1}A_{p}^{1/2}X_{n}X_{n}^{*}A_{p}^{1/2}$, for i.i.d. real entries and under
a finite fourth moment condition, Pan and Gao \cite{PanG} and Bao \cite{Bao}
derived the LSD of
$\sqrt{n/p}(n^{-1}A_{p}^{1/2}X_{n}X_{n}^{*}A_{p}^{1/2}-A_{p})$,
which coincides with that of a generalized Wigner matrix
$p^{-1/2}A_{p}^{1/2}W_{p}A_{p}^{1/2}$ studied by Bai and Zhang \cite{BZ}. Our
work here extends the former result to a more general setting, namely, when
$B_{n}$ is an arbitrary $n\times n$ positive semi-definite matrix whose first
two spectral moments converge to finite positive values as $n\to \infty$, and
the entries of $X_{n}$ are either real or complex. The strategy of the proof of
this result is divided into three parts. We first assume that the entries of
$X_n$ are i.i.d. Gaussian and use a construction analogous to that in Pan and
Gao \cite{PanG} to obtain the form of the approximate deterministic equations
describing the expected Stieltjes transforms, then use a result on
concentration of smooth functions of independent random elements to show that
the Stieltjes transform concentrates around its mean in the general setting
(without the restriction of Gaussianity), and finally utilize the Lindeberg
principle %formulated by Chatterjee \cite{Chatterjee}
to show that the expected Stieltjes transforms in the Gaussian and in the
general case are asymptotically the same. In the process, we also prove the
existence and uniqueness of the system of equations describing the Stieltjes
transform for an arbitrary $F^A$, non-degenerate at zero. Further, we state a
result characterizing the LSD, including the existence and shape of its density
function, by following the approach in Bai and Zhang \cite{BZ}. We also study
the question of fluctuation of the eigenvalues of the sample covariance matrix
$S_n := n^{-1} A_{p}^{1/2}X_{n}B_{n}X_{n}^{*}A_{p}^{1/2}$ itself when the ESD of $A_{p},$ say
$F^{A_{p}}$ and its limit $F^A$ are finite mixtures of point masses.
Specifically, we show that the empirical distribution of
$\{\sqrt{n/p}(\lambda_j(S_n) - n^{-1}\tr(B_n) \lambda_j(A_{p}))\}_{j=1}^p$,
where $\lambda_j$ denotes the $j$-th largest eigenvalue, converges a.s. to a
mixture of rescaled semi-circle laws with mixture weights being the same as the
weights corresponding to the point masses of $F^A$ and the scaling factor
depending on the limiting value of $n^{-1}\tr(B_{n}^2)$ and the atoms of
$F^{A}$.

It should be noted that the data model of the form
$Y_{n}=A_{p}^{1/2}X_{n}B_{n}^{1/2}$, where $X_{n}$ has i.i.d. entries with zero
mean and unit variance,
%endowed with covariance structure $A_{p}^{1/2}X_{n}B_{n}X_{n}^{*}A_{p}^{1/2}$
relates very closely to the \textit{separable covariance model} widely
used in spatio-temporal data modeling, especially for modeling
environmental data (e.g., Kyriakidis and Journel \cite{Ky99},
Mitchell and Gumpertz \cite{Mit03}, Fuentes \cite{Fuentes}, Li et al. \cite{Li08}). The separable covariance model
refers to the fact that for any $p$ sampling locations in space, and any
$n$ observation times, the covariance of the corresponding data
matrix can be expressed in the form $\Sigma_{n,p} = A_{p} \otimes
B_{n}$. In that context, the rows of $Y_{n}$ correspond to spatial
locations while the columns represent the observation times. If furthermore,
the process is Gaussian, which is often assumed in the literature,
then the data matrix $Y_n$ is exactly of the form $A_{p}^{1/2}X_{n}B_{n}^{1/2}$
where $X_{ij}$'s are i.i.d. $N(0,1)$. Assuming a separable covariance structure,
that the process is stationary in space, the sampling locations
cover the entire spatial region under consideration fairly evenly, and
the temporal variation has only short term dependence (not necessarily
stationary), the covariance of the observed data can be expressed in the form
$A_{p}\otimes B_{n}$ where $A_{p}$ and $B_{n}$ satisfy conditions
3, 4 and 5 of our main result in this paper (Theorem \ref{thm:main_LSD}). Moreover,
if the sampling locations are on a spatial grid, then the matrix of eigenvectors of $A_p$
is approximately the Fourier rotation matrix on $\mathbb{R}^p$ and the eigenvalues
are approximately the Fourier transform of the spatial autocovariance kernel
evaluated at certain discrete frequencies related to the grid spacings.

There is a body of literature on the statistical inference for a separable
covariance model, in particular about the tests for separability of the joint
covariance of the data. Notable examples include Dutilleul \cite{Dutilleul}, Lu
and Zimmerman \cite{LZ}, Mitchell et al. (\cite{Mit05}, \cite{Mit06}), Fuentes
\cite{Fuentes},  Roy and Khatree \cite{RK}, Simpson \cite{Simpson} and Li et al. \cite{Li08}.
These tests typically assume joint Gaussianity of the data and often the derivation
of the test statistic requires additional structural assumptions, e.g.,
stationarity of the spatial and temporal processes (Fuentes \cite{Fuentes}).
In addition, the estimation techniques often involve matrix inversions (Dutilleul
\cite{Dutilleul}, Mitchell et al. \cite{Mit06}) which become challenging if the
dimensionality (either $p$ or $n$) is large. We study the problem of tests
involving the separable covariance structure under the framework $p, n\to
\infty$ and $p/n\to 0$. Under this setting, $\parallel n^{-1}Y_{n}Y_{n}^{*}-
n^{-1}\tr(B_{n})A_{p}\parallel \stackrel{a.s.}{\longrightarrow} 0$ and hence we
can infer about the spectral properties of $A_{p}$ from that of the sample
covariance matrix $n^{-1}Y_{n}Y_{n}^{*}$. In particular, we propose to
use the results derived here to construct %a Cr\'{a}mer-von Mises type
test statistic for testing whether the space-time data follows a specific
separable covariance model, where the null hypothesis is in terms of
specification of $A_{p}$ and the first two spectral moments of $B_n$. Let
$A_{0}$, $\tr(B_{0})$ and $\tr(B_{0}^2)$ be the specified values of $A_{p}$,
$\tr(B_{n})$ and $\tr(B_{n}^2)$ under the null hypothesis, Then this statistic
measures the difference of the ESD of the matrix $\sqrt{n/p}(n^{-1} Y_n Y_n^* -
n^{-1}\tr(B_{0}) A_{0})$, from the LSD of $C_n$ described in (\ref{eq:C_n}),
where the matrix  $X_n$ is assumed to have i.i.d. entries with zero mean and
unit variance, $A_{p} = A_0$, $\tr(B_{n}) = \tr(B_{0})$ and $\tr(B_{n}^2) =
\tr(B_{0}^2)$. We also propose a Monte-Carlo method for determination of the
cut-off values of the test for any given level of significance and analyze the
behavior of the test through simulation. We also carry out a simulation
study with different combinations of $(p,n)$ to empirically assess the rate of
convergence of the ESD under to the LSD as measured by the $L^2$ distance between
these distributions.

%%%%%----------------------------------------------------------------------------
\section{Main results}\label{sec:main_results}
Under the framework presented in Section \ref{sec:intro}, our main result in
this paper is about the existence and uniqueness of the LSD of $C_n$ defined
through (\ref{eq:C_n}). The result will be described in terms of the Stieltjes
transform of the matrices. The Stieltjes transform of the empirical spectral
distribution $F^{C_{n}}$ is defined as
$$
s_{n}(z)=\int\frac{1}{x-z}dF^{C_{n}}(x)
$$
for any $z \in \mC^{+}:=\{u+iv:u \in \mathbb{R}, v>0\}$. It will be shown that
the ESD of $C_{n}$ will converge almost surely to a distribution $F$ whose
Stieltjes transform is determined by a system of equations.
\begin{thm}\label{thm:main_LSD}
Suppose that
\begin{enumerate}
\item for every $p$ and $n$, $\{X_{i,j}: 1\leq i \leq p, 1\leq j \leq n\}$ is an
array of i.i.d real or complex valued random variables with
$\mathbb{E}(X_{11})=0$, $\mathbb{E}|X_{11}|^{2}=1$ and $\mathbb{E}|X_{11}|^4<
\infty$;
\item $p=p(n) \to \infty$ with $p(n)/n\to 0$ as $n\to \infty$;
\item $A_{p}$ is a $p\times p$ nonnegative definite Hermitian matrix and $B_{n}$ is a $n\times n$
nonnegative hermitian matrix;
\item
the ESD $F^{A_{p}} \Longrightarrow F^{A}$ as $p \to \infty$ where $F^{A}$ is a
nonrandom distribution function on $\mathbb{R}_+$ that is not degenerate at
zero;
\item
$\parallel B_{n} \parallel$ is bounded above, and $n^{-1} \tr(B_{n}^k)$ for
$k=1,2$  converge to finite positive constants as $n \to \infty$.
\end{enumerate}
Then $F^{C_{n}}$ almost surely converges weakly to a nonrandom distribution $F$
as $n\to \infty$, whose Stieltjes transform $s(z)$ is determined by the
following system of equations:
\begin{equation}\label{equation_system}
\begin{cases}
s(z) = - \int \frac{dF^{A}(a)}{z +\bar{b}_{2} a \beta(z)} \\
\beta(z) =  -\int \frac{ a  dF^{A}(a)}{z + \bar{b}_2  a \beta(z)}
\end{cases}
\end{equation}
for any $z\in \mC^{+}$, where $\bar{b}_2 = \displaystyle\lim_{n \to \infty}
n^{-1}tr(B_n^2)$.
\end{thm}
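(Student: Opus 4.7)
The plan is to follow the three-stage strategy outlined in the introduction. Let $s_n(z) = p^{-1}\tr\bigl((C_n-zI_p)^{-1}\bigr)$ denote the Stieltjes transform of $F^{C_n}$, for $z \in \mathbb{C}^+$; it suffices to prove $s_n(z) \to s(z)$ almost surely for each such $z$, where $s(z)$ solves the system \eqref{equation_system}, and then invoke Vitali's theorem together with the continuity theorem for Stieltjes transforms to obtain weak convergence of $F^{C_n}$.

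First I would perform the standard truncation and recentering of the entries $X_{ij}$ using the finite fourth moment hypothesis, reducing to the case $|X_{ij}|\leq \eta_n\sqrt{n}$ with $\eta_n\downarrow 0$, without changing the LSD. Next, on the truncated ensemble I would show the concentration $s_n(z) - \mathbb{E}[s_n(z)] \to 0$ almost surely. Since each entry $X_{ij}$ enters $C_n$ only through a quadratic form, and the resolvent norm is bounded by $(\mathrm{Im}\,z)^{-1}$, the map $(X_{ij})\mapsto s_n(z)$ has controllable bounded differences, and McDiarmid's inequality combined with Borel--Cantelli yields the claim.

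The central step is to derive the deterministic equations in the Gaussian case. Assuming the $X_{ij}$ are i.i.d.\ standard (real or complex) Gaussian, set $Q(z) = (C_n-zI_p)^{-1}$ and define the auxiliary quantity $\beta_n(z) = p^{-1}\mathbb{E}[\tr(A_p Q(z))]$. Starting from the identity $zQ(z) = -I + C_n Q(z)$ and applying Gaussian integration by parts (Stein's identity) entry-wise to $\mathbb{E}[C_n Q(z)]$, the leading-order contributions from the two halves of $C_n$, namely $\sqrt{n/p}\,n^{-1}A_p^{1/2}X_n B_n X_n^* A_p^{1/2}$ and $\sqrt{n/p}\,n^{-1}\tr(B_n)A_p$, cancel; what survives is a contribution proportional to $\sqrt{n/p}\cdot n^{-1}\tr(B_n^2)\,A_p$ acting on $\mathbb{E}[Q(z)]$, arising from the second-order term in the Stein expansion of the quadratic form. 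This is exactly how $\bar{b}_2$ enters. Comparing the diagonal entries $\mathbb{E}[Q_{ii}]$ with $-1/(z + \bar{b}_2 \lambda_i(A_p) \beta_n(z))$ and taking normalized traces, one obtains the finite-$n$ identities
\begin{align*}
\mathbb{E}[s_n(z)] &= -\int \frac{dF^{A_p}(a)}{z + \bar{b}_2 a \beta_n(z)} + o(1),\\
\beta_n(z) &= -\int \frac{a\,dF^{A_p}(a)}{z + \bar{b}_2 a \beta_n(z)} + o(1),
\end{align*}
which pass to the limit under conditions 4 and 5 to give \eqref{equation_system}.

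To transfer the result to the general (non-Gaussian) case, I would apply the Lindeberg swapping principle: replace the truncated entries of $X_n$ one at a time by independent Gaussians with matching first two moments, and control each swap's contribution to $\mathbb{E}[s_n(z)]$ via a third-order Taylor expansion in the swapped entry, estimated using the finite fourth moment and the uniform resolvent bound. Summing the negligible per-swap errors yields $|\mathbb{E}[s_n(z)] - \mathbb{E}[s_n^{(G)}(z)]|\to 0$. Existence and uniqueness of the solution to \eqref{equation_system} in the appropriate half-plane (with $\mathrm{Im}\,\beta(z)$ of the correct sign) is obtained by substituting the first equation into itself through $\beta$, reducing to a single fixed-point equation, then invoking a contraction argument for $|z|$ large and extending by analytic continuation to $\mathbb{C}^+$; the non-degeneracy of $F^A$ at zero ensures that the fixed-point map is well-defined and that the solution is unique. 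The main obstacle I anticipate is the Gaussian computation: because the overall $\sqrt{n/p}$ prefactor magnifies terms that would be negligible in the standard $p/n\to c>0$ regime, the expansion must be carried one order beyond the leading cancellation to isolate the surviving $n^{-1}\tr(B_n^2)$-contribution, with all remainders controlled uniformly in $n$.
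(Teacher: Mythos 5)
Your overall architecture — truncation of the entries of $X_n$, McDiarmid concentration plus Borel--Cantelli for the centered Stieltjes transform, a Gaussian computation for the deterministic equations, Lindeberg swapping to remove Gaussianity, and an existence/uniqueness argument — matches the paper's step for step (compare Remark~\ref{rem:proof_thm}). The one place you take a genuinely different route is the heart of the Gaussian step. You apply Gaussian integration by parts (Stein's identity) directly to $\mathbb{E}[C_nQ(z)]$ in the resolvent identity $zQ=-I+C_nQ$, exploiting the exact cancellation between the $\bar b_n A_pQ$ contribution generated by the first IBP term and the $-\bar b_nA_pQ$ from the second half of $C_n$, and then isolating the surviving $\bar b_2 A_p\beta_n(z)$ term one order deeper. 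The paper instead carries out a leave-one-row-out rank-one resolvent decomposition in the style of Bai--Yin and Pan--Gao, writing $Y(z)=Y_k(z)+e_kh_k^*$, iterating the Sherman--Morrison formula to reach (\ref{final equation for expression of ST}) and (\ref{eq:beta_n_repr}), and then estimating the diagonal fluctuations $\tau_{kk}$ and the quadratic forms $\omega_k^*Y_{(k)}^{-1}\omega_k$. The paper explicitly names the Stein-equation route (Pastur--Shcherbina, Bao) as a viable alternative, so your choice is defensible: the Stein route makes the cancellation structurally transparent and requires one expansion, while the leave-one-out route keeps every estimate at the resolvent level and more readily yields the variance bounds on the error terms $d_{31}$, $d_{32}$. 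Your uniqueness argument (contraction for $\Im z$ large plus the identity theorem applied to normal-family subsequential limits of the $\mathbb{E}[s_n]$) suffices for the statement of the theorem, though the paper proves the stronger pointwise uniqueness via the inequality $|\gamma(z)|<1$ for every $z\in\mathbb{C}^+$, which is what later drives Proposition~\ref{prop:LSD_density}.

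There is one genuine gap: you truncate the entries of $X_n$ but never truncate the spectrum of $A_p$. Condition~4 of the theorem gives only weak convergence $F^{A_p}\Rightarrow F^A$, not a uniform bound on $\|A_p\|$, and nearly every estimate in the Gaussian step (in either the Stein or the leave-one-out formulation) as well as the bounded-difference constant for $\beta_n(z)$ in McDiarmid's inequality is polynomial in $\|A_p\|$. The paper does this first: it truncates $\Lambda$ at a continuity point $a_0$ of $F^A$, shows via a rank inequality that $\sup_x|F^{C_n}(x)-F^{\widehat C_{n0}}(x)|$ is at most $\epsilon$ a.s.\ for $a_0$ large, proves the theorem under $\|A_p\|\le a_0$, and then uses the continuous dependence of the solution of (\ref{equation_system}) on $F^A$ (established in Section~\ref{subsec:uniqueness}) to pass $a_0\to\infty$. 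Without this reduction your Stein expansion and your McDiarmid constants are not uniformly controlled in $n$.
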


\begin{rem}\label{rem:b_2}
In (\ref{equation_system}), the constant $\bar{b}_2$ determines the scale of
the support of the LSD $F$. Specifically, the LSD $F$ is related to the LSD
$F_{A,I}$ corresponding to the case $B_{n} = I_n$ (studied by Pan and Gao
\cite{PanG} and Bao \cite{Bao}), by $F(x) = F_{A,I}(\bar{b}_2^{-1/2} x)$ for
all $x \in \mathbb{R}$. Note also that, $F_{A,I}$ coincides with the LSD of the
generalized Wigner matrix $p^{-1/2} A_{p}^{1/2} W_p A_{p}^{1/2}$ analyzed by
Bai and Zhang \cite{BZ}.
\end{rem}

\begin{rem}\label{rem:semicircle}
If $A_{p} = I_p$, the two equations (\ref{equation_system}) reduce to only one,
namely, $s(z)(z + \bar{b}_2 s(z)) = - 1$, which is the equation for a rescaled
semi-circle law $F_{sc}(\cdot;\sqrt{\bar{b}_2})$ with scaling factor
$\sqrt{\bar{b}_2}$, where, for any $\sigma > 0$,
\begin{equation}\label{eq:scaled_semi_circle_law}
F_{sc}(x;\sigma) := F_{sc}(\sigma^{-1} x), \qquad \mbox{for all}~~ x \in
\mathbb{R},
\end{equation}
where $F_{sc}$ denotes the semi-circle law. Notice that, in this case due to
the rotational invariance, the statement of the Theorem \ref{thm:main_LSD}
reduces to the statement that the empirical distribution of
$\{\sqrt{n/p}(\lambda_j(S_n) - n^{-1}\tr(B_n))\}_{j=1}^p$, converges a.s. to
the rescaled semi-circle law with scaling factor $\bar{b}_2$. We present an
interesting generalization of this result in Section
\ref{subsec:fluctuation_eigenvalues}.
\end{rem}

\begin{rem}\label{rem:non_Hermitian_root}
In the statement of Theorem \ref{thm:main_LSD}, the matrix $A_{p}^{1/2}$ needs
not be the Hermitian square root of $A_{p}$. As long as
$(A_{p}^{1/2})(A_{p}^{1/2})^* = A_{p}$, the result will continue to hold. In
particular, $A_{p}^{1/2}$ can be of the form $A_{p}^{1/2} = U \Lambda^{1/2}
V^*$, where $A_{p} = U \Lambda U^*$ is the spectral decomposition of $A_{p}$,
so that $\Lambda$ is a diagonal matrix and $V^* V = U^* U = I_p$. Moreover,
from this it also follows that if $\widetilde V$ is a $p\times q$ matrix with
$q \leq p$ such that $\widetilde V^* \widetilde V = I_q$ where $q \to \infty$
such that $q/p \to \omega \in (0,1]$  and $\widetilde Y_n = \widetilde V^* X_n
B_{n}^{1/2}$ then the ESD of $\sqrt{n/q}(n^{-1} \widetilde Y_n\widetilde Y_n^*
- n^{-1}\tr(B_{n}) I_q)$ converges a.s. to the distribution
$F_{sc}(\cdot;\sqrt{\bar{b}_2})$ introduced in Remark \ref{rem:semicircle}.
\end{rem}

\begin{rem}\label{rem:proof_thm}
Proof of Theorem \ref{thm:main_LSD} can be divided into the following parts:
\begin{enumerate}
\item The spectrum of $A_{p}$ is truncated at a sufficiently large $a_{0}$. This
is done in Section \ref{subsec:truncation_F_A}, following an approach in Bai and
Yin \cite{BY}. It is shown that the ESD of the $C_{n}$ and the matrix
corresponding to the truncated $A_{p}$ are almost surely equivalent.

\item For each $z \in \mC^{+}$, and for $C_n$ corresponding to matrices with
i.i.d. standard Gaussian entries, $\mathbb{E}(s_{n}(z)) \to s(z)$ satisfying
(\ref{equation_system}), which is shown in Section
\ref{subsec:deterministic_part}.

\item
For each $z\in \mC^{+}$, $s_{n}(z) - \mathbb{E}(s_n(z))$ converges almost
surely to zero. This is derived in Section \ref{subsec:random part} through the
use of McDiarmid's inequality (McDiarmid \cite{Mcdiarmid}).

\item
Existence and uniqueness of the solution of the system of equations
(\ref{equation_system}) defining the limiting Stieltjes transform $s(z)$ is
established in \ref{subsec:uniqueness}.

\item
The entries of $X_{n}$ are truncated at $n^{1/4}\epsilon_{p}$  and centered,
where $\epsilon_{p}\to 0$, $p^{1/4}\epsilon_{p}\to \infty$ which does not alter
the LSD. The result is established in the general setting by establishing the
asymptotic negligibility of the difference of $\mathbb{E}(s_n(z))$
corresponding to independent copies of $X_{n}$ with such truncated entries in
Section \ref{subsec:nongaussian}.
\end{enumerate}
\end{rem}

%%%----------------------------------------------------------------------

%%%----------------------------------------------------------------------
\subsection{Analysis of LSD}\label{subsec:analysis LSD}

The following result characterizes the behavior of the LSD $F$ in Theorem
\ref{thm:main_LSD}.
\begin{pro}\label{prop:LSD_density}
Suppose that $F^{A}(a)\ne I_{[0,\infty)}(a)$ and let $F$ be the LSD of
$F^{C_n}$ as in Theorem \ref{thm:main_LSD}. Then, $F(\{0\})=F^{A}(\{0\})$, and
for any real $x\neq 0$,
\begin{equation*}
s(x)=\lim_{z\in \mC^{+}\to x}s(z), \quad \beta(x)=\lim_{z\in \mC^{+}\to
x}\beta(z)
\end{equation*}
exist such that
\begin{equation*}
s(x)=-\int \frac{dF^{A}(a)}{x+\bar{b}_{2}a\beta(x)},
\end{equation*}
where $\beta(x)$ uniquely solves
\begin{equation}\label{eq:beta_x_equation}
\beta(x)=-\int \frac{adF^{A}(a)}{x+\bar{b}_2a\beta(x)}
\end{equation}
while satisfying $\Im \beta(x)\geq 0$, $x\Re\beta(x)<0$ and $\omega(x) \leq 1$,
where
\begin{equation}\label{eq:omega_x_equation}
\omega(x):=\int\frac{\bar{b}_{2}a^{2}}{|x+\bar{b}_{2}a\beta(x)|^{2}}dF^{A}(a).
\end{equation}
Moreover, we have
\begin{enumerate}
\item $s(x)$ and $\beta(x)$ are continuous on the real line except only at the origin.
\item  $F(x)$ is symmetric and continuously differentiable on
the real line except at the origin and its derivative is given by
\begin{equation}\label{eq:density}
f(x)=-\frac{2\Re\beta(x)\Im\beta(x)}{\pi x}.
\end{equation}
\item The support of $F$, say $\mathcal{S}$ is determined as follows: for any
$x \neq 0$, $x \in \mathcal{S}^c$ (complement of $\mathcal{S}$) if and only if
there exists some $\delta > 0$ such that for all $y \in (x-\delta,x+\delta)$,
$\omega(y) < 1$.
\end{enumerate}
\end{pro}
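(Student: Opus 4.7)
The plan is to work entirely from the fixed-point system \eqref{equation_system}, reading off all qualitative features of $F$ from the boundary behavior of $\beta(z)$ as $\Im z\downarrow 0$. Following the scheme in Bai and Zhang \cite{BZ}, I would first analyze $\beta(z)$ on $\mC^+$ and show that it extends continuously to $\mR\setminus\{0\}$; then apply Stieltjes-Perron inversion to $s(z)$, expressed as a simple function of $\beta(z)$, to obtain the point mass at $0$ and the density; and finally use an implicit-function argument on the $\beta$-equation to characterize the support.

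The key algebraic step is the identity
\[
z\, s(z) \;=\; -1 - \bar{b}_2\, \beta(z)^2,
\]
obtained by writing $z/(z+\bar{b}_2 a\beta(z))=1-\bar{b}_2 a\beta(z)/(z+\bar{b}_2 a\beta(z))$ inside the first equation of \eqref{equation_system} and using the second. Once $\beta(x):=\lim_{z\to x}\beta(z)$ is shown to exist for $x\ne 0$, Stieltjes inversion applied to this identity yields the stated density. For the atom at $0$, I would split $iv\, s(iv)$ according to $\{a=0\}$ and $\{a>0\}$: the first piece contributes $-F^A(\{0\})$, while the second vanishes as $v\downarrow 0$ once $|\beta(iv)|$ is known to remain bounded, which itself follows from the $\beta$-equation using the finite first moment of $F^A$.

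To control the boundary behavior of $\beta$, I would take imaginary and real parts of the $\beta$-equation to obtain
\[
\Im\beta(z)\bigl(1-\omega(z)\bigr) \;=\; \Im z\cdot\int\frac{a\,dF^A(a)}{|z+\bar{b}_2 a\beta(z)|^2},
\]
together with a parallel identity for $\Re\beta$. The first gives $\Im\beta(z)>0$ and $\omega(z)<1$ throughout $\mC^+$; the second delivers the sign relation $x\,\Re\beta(x)<0$. These bounds make $\{\beta(z)\}$ a normal family on $\mC^+$, so every sequence $z_k\to x\in\mR\setminus\{0\}$ admits a subsequential cluster point solving \eqref{eq:beta_x_equation} under the stated constraints. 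Uniqueness among such solutions follows by subtracting two candidates $\beta_1,\beta_2$: the difference satisfies $(\beta_1-\beta_2)(1-I)=0$ with $I$ an integral that Cauchy-Schwarz bounds by $\sqrt{\omega_1\omega_2}\leq 1$, and the sign conditions rule out the boundary case $I=1$, forcing $\beta_1=\beta_2$.

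For the support characterization in item~3, if $x\ne 0$ lies in $\mathcal{S}^c$ then $\beta$ is real analytic in a neighborhood of $x$, and differentiating \eqref{eq:beta_x_equation} in $x$ gives
\[
\beta'(x)\bigl(1-\omega(x)\bigr) \;=\; \int\frac{a\,dF^A(a)}{(x+\bar{b}_2 a\beta(x))^2},
\]
forcing $\omega(x)<1$. Conversely, if $\omega(y)<1$ on a neighborhood of $x$, the implicit function theorem produces a real analytic real-valued solution of \eqref{eq:beta_x_equation} there; by uniqueness it agrees with $\lim\beta(z)$, so $\Im\beta\equiv 0$ and $f\equiv 0$ on that neighborhood. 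The main obstacle I anticipate is this final equivalence: the delicate step is to check that the real branch selected by the $\omega<1$ criterion is exactly the one matching the upper-half-plane limit of $\beta(z)$. Verifying this branch-matching is where the sign conditions $x\Re\beta<0$ and $\Im\beta\ge 0$ come in, combining with the monotone behavior of $\omega$ along real branches to pin down the correct solution.
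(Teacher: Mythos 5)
The paper itself gives no detailed proof here; it simply points to Theorem~1.2 of Bai and Zhang \cite{BZ} ``with an additional scale factor $\bar{b}_2$,'' and the route you sketch (derive $zs(z)=-1-\bar b_2\beta(z)^2$ from the system, establish boundary limits of $\beta$ via normal families and the imaginary-part identity, invert via Stieltjes--Perron, characterize the support via implicit function arguments) is exactly the Bai--Zhang scheme adapted to this setting. So your approach matches the one the authors intended.

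One substantive point is worth flagging: your identity $zs(z)=-1-\bar b_2\beta(z)^2$ is correct, and inverting it gives
\[
f(x)=-\frac{2\,\bar b_2\,\Re\beta(x)\,\Im\beta(x)}{\pi x},
\]
which carries a factor of $\bar b_2$ absent from the paper's displayed formula \eqref{eq:density}. A sanity check with $A_p=I_p$, where the LSD should be $F_{sc}(\cdot;\sqrt{\bar b_2})$, confirms the $\bar b_2$ factor is required: one finds $\Re\beta(x)=-x/(2\bar b_2)$, $\Im\beta(x)=\sqrt{4\bar b_2-x^2}/(2\bar b_2)$, and only the formula with the extra $\bar b_2$ reproduces the semicircle density $\sqrt{4\bar b_2-x^2}/(2\pi\bar b_2)$. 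So your derivation is arithmetically correct, and \eqref{eq:density} as printed appears to have dropped the scale factor; your write-up should make the discrepancy explicit rather than assert agreement with the stated formula.

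Two smaller gaps to close. First, for the atom at zero: bounding $|\beta(iv)|$ from above is not enough to kill the $\{a>0\}$ contribution to $iv\,s(iv)$; you need $v/|\beta(iv)|\to0$ as $v\downarrow0$, i.e.\ that $\beta(iv)$ does not collapse to zero, and this is where the hypothesis $F^A\ne I_{[0,\infty)}$ is actually used (indeed if $F^A=\delta_0$ then $\beta\equiv0$ and the argument would fail, consistently with the LSD being $\delta_0$). Second, for item~3, when you differentiate \eqref{eq:beta_x_equation} at a real $x$ outside the support you obtain $\beta'(x)\bigl(1-\gamma(x)\bigr)=\int a\,dF^A(a)/(x+\bar b_2 a\beta(x))^2$ with $\gamma(x)=\int \bar b_2 a^2\,dF^A(a)/(x+\bar b_2 a\beta(x))^2$; the identification $\gamma(x)=\omega(x)$ uses that $\beta(x)$ is real there, and the conclusion $\omega(x)<1$ uses that the right-hand side is a strictly positive real number (so the prefactor cannot vanish) together with the a~priori bound $\omega\le1$. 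These details are routine but should be spelled out.
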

The proof of this proposition follows along the line of the proof Theorem 1.2
of Bai and Zhang \cite{BZ} with an additional scale factor $\bar{b}_2:=
\displaystyle\lim_{n\to \infty} n^{-1}\tr(B_{n}^2)$. The following lemma, which
is a consequence of property (3) of Proposition \ref{prop:LSD_density},
provides a way of determining the support of the density function.
\begin{lemma}\label{lemma:density_support}
The support of $f(x)$ is the set of $x \in \mathbb{R}$ satisfying $x\Re\beta(x)
< 0$ and $\omega(x)=1$ (equivalently, $\Im\beta(x)>0$).
\end{lemma}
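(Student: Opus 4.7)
The plan is to combine the explicit density formula~\eqref{eq:density} with the sign constraints on $\beta$ supplied by Proposition~\ref{prop:LSD_density} and a one-line imaginary-part calculation from equation~\eqref{eq:beta_x_equation}, then close the characterization by invoking property~(3) of the same proposition. The overall strategy is first to reduce the lemma to showing that $\{x\neq 0 : \Im\beta(x) > 0\}$ and $\{x\neq 0 : \omega(x) = 1\}$ describe the same set (in the sense of having the same closure, which is the support of $f$), and then to establish each inclusion.

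For the reduction and the easy inclusion, I would start by noting that, for every $x \neq 0$, Proposition~\ref{prop:LSD_density} already guarantees $x\Re\beta(x) < 0$ and $\Im\beta(x) \geq 0$, so the formula~\eqref{eq:density} gives $f(x) \geq 0$ everywhere and $f(x) > 0$ if and only if $\Im\beta(x) > 0$. Thus the clause $x\Re\beta(x) < 0$ in the lemma is automatic away from the origin. Writing $\beta(x) = \beta_r(x) + i\beta_i(x)$ with $\beta_i \geq 0$ and taking imaginary parts of~\eqref{eq:beta_x_equation} via
\[
\Im\!\left(\frac{-a}{x + \bar{b}_{2}\, a\, \beta(x)}\right) = \frac{\bar{b}_{2}\, a^{2}\, \beta_i(x)}{|x + \bar{b}_{2}\, a\, \beta(x)|^{2}},
\]
collapses the $\beta$-equation to $\beta_i(x) = \omega(x)\,\beta_i(x)$, i.e., $\beta_i(x)\,(1-\omega(x)) = 0$. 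Hence $\Im\beta(x) > 0$ forces $\omega(x) = 1$, giving $\{x : f(x) > 0\} \subseteq \{x \neq 0 : \omega(x) = 1\}$.

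For the reverse inclusion I would invoke property~(3) of Proposition~\ref{prop:LSD_density}. Since $\beta$ is continuous on $\mathbb{R}\setminus\{0\}$ by property~(1), and $\beta(x) \neq 0$ for $x \neq 0$ (which follows from~\eqref{eq:beta_x_equation} once one uses the non-degeneracy of $F^{A}$ at zero), dominated convergence promotes $\omega$ to a continuous function on $\mathbb{R}\setminus\{0\}$. Combined with the universal bound $\omega \leq 1$, property~(3) then reformulates to: the support $\mathcal{S}$ of $F$ on $\mathbb{R}\setminus\{0\}$ is exactly $\{x \neq 0 : \omega(x) = 1\}$. Since $\mathcal{S}$ is also, by definition, the closure of $\{f > 0\} = \{\Im\beta > 0\}$, all three descriptions of the support agree as closed sets. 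The parenthetical ``equivalently, $\Im\beta(x) > 0$'' then holds pointwise at interior points of $\mathcal{S}$, the only possible exceptions being boundary edges where $\omega(x) = 1$ but $\Im\beta(x) = 0$; these are absorbed into the closure and do not alter the support. This boundary-edge subtlety is the single mildly delicate point in the proof; everything else reduces to the short imaginary-part identity and an appeal to property~(3).
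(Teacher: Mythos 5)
Your proof takes the same core approach as the paper's, which rests entirely on the one-line imaginary-part identity: taking $\Im(z)=0$ in equation~(\ref{imaginary part of beta}) yields $\Im\beta(x)=\omega(x)\Im\beta(x)$, so that $\Im\beta(x)>0$ forces $\omega(x)=1$, and then plugging into~(\ref{eq:density}). Where you differ is in being more careful about the converse. The paper's proof simply asserts ``$\Im\beta(x)>0$ if and only if $\omega(x)=1$'' from the relation $\beta_2(1-\omega(x))=0$, which only gives the forward implication; at boundary points of the support (e.g.\ the edge $x=2$ in the semicircle case $F^A=\delta_1$, $\bar b_2=1$, where $\beta(2)=-1$ is real yet $\omega(2)=1$), one has $\omega(x)=1$ with $\Im\beta(x)=0$, so the pointwise equivalence the paper states is actually false there. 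Your argument repairs this by invoking property~(3) of Proposition~\ref{prop:LSD_density} together with continuity of $\beta$ and $\omega$ to establish the reverse inclusion at the level of closed sets, and by correctly noting that the lemma's parenthetical should be read as an identification of supports (closures), not a pointwise biconditional. In short: same underlying idea, but your write-up closes a genuine gap that the paper leaves open.
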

A more direct verification of Lemma \ref{lemma:density_support} is given in
Section \ref{subsec:uniqueness}.

%%%%------------------------------------------------------------------------------
\subsection{Fluctuation of eigenvalues of $S_n$}\label{subsec:fluctuation_eigenvalues}

In certain applications, not only the eigenvalues of $C_n$ but difference of
the eigenvalues of $S_{n}$ from those of $\mathbb{E}(S_{n})$ may be of
interest. Since $\parallel S_n - \mathbb{E}(S_n)\parallel \to 0$, a.s., under
the framework $p/n \to 0$, it is expected that the eigenvalues of $S_{n}$ will
fluctuate around the ``corresponding'' eigenvalues of $\mathbb{E}(S_{n}) =
n^{-1}\tr(B_{n}) A_{p}$. To make this notion more precise, we consider the
setting where there are finitely many distinct eigenvalues of $A_{p}$. Then for
large enough $n$, the eigenvalues of $S_n$ will tend to cluster around these
distinct eigenvalues of $n^{-1}\tr(B_{n}) A_{p}$. Moreover, if both $\parallel
A_{p} \parallel$ and $\parallel B_{n} \parallel$ are bounded,  the proportion
of eigenvalues falling in each cluster will coincide with the proportion of the
corresponding eigenvalue of $A_{p}$ in the ESD of $A_{p}$. This can be seen as
an instance of the spectrum separation phenomenon studied by Bai and
Silverstein \cite{BS} for sample covariance matrices in the setting $p/n \to c
\in (0,\infty)$. Our goal in this subsection is to establish that, if
$\parallel B_n \parallel$ is bounded, and if the probability distribution of
the entries of $X_n$ has sufficiently fast decay in the tails (specifically,
``sub-Gaussian tails''), then the fluctuations of the eigenvalues of $S_{n}$
around the eigenvalues of $n^{-1}\tr(B_{n}) A_{p}$ can be fully characterized,
provided $A_{p}$ has finitely many distinct eigenvalues, the proportion of each
of which converges to a nonzero fraction.

To state the result, we first define a sub-Gaussian random vector (cf.
Vershynin \cite{Vershynin}). A real-valued random vector $\mathbf{y} =
(y_1,\ldots,y_n)^T$ is said to be sub-Gaussian with scale parameter $\sigma
\geq 0$, if for all $\gamma \in \mathbb{R}^n$,
\begin{equation}\label{eq:subgaussian_vector}
\mathbb{E}[\exp(\gamma^T \mathbf{y})] \leq \exp(\parallel \gamma \parallel^2
\sigma^2/2).
\end{equation}
Clearly, if $\mathbf{y}$ has independent coordinates each of which is
sub-Gaussian with scale parameter $\sigma$, then $\mathbf{y}$ is sub-Gaussian.
Moreover, it is easy to see that if $\mathbf{y}$ is sub-Gaussian with scale
parameter $\sigma$, then for any $m \times n$ matrix $D$, the vector $D
\mathbf{y}$ is also sub-Gaussian, with scale parameter $\parallel D \parallel
\sigma$. A complex-valued random vector is sub-Gaussian if and only if both
real and imaginary parts of the vector are sub-Gaussian.

\begin{thm}\label{thm:eigenvalue_fluctuations}
Let $B_{n}$ be a $n\times n$ positive semi-definite matrix such that
$\parallel B_{n} \parallel$ is bounded above, $n^{-1} \tr(B_{n}) \to \bar{b}$
and $n^{-1} \tr(B_{n}^2) \to \bar{b}_2$ as $n\to \infty$. Let $A_{p}$ be a
$p\times p$ positive semidefinite matrix with $m$ distinct eigenvalues
$\alpha_1 > \ldots > \alpha_m \geq 0$ such that $\alpha_1$ is bounded above and
$m$ is fixed, and if $p_j$ denotes the multiplicity of $\lambda_j$, then $p_j/p
\to c_j > 0$ for all $j$, as $p \to \infty$. Let $Y_n = A_{p}^{1/2} X_{n}
B_{n}^{1/2}$ where  $X_{n}$ is a $p\times n$ matrix with i.i.d. real or complex
sub-Gaussian entries $X_{ij}$ satisfying $\mathbb{E}(X_{11}) = 0$ and
$\mathbb{E}|X_{11}|^2 = 1$. In the complex case, we also suppose that the real
and imaginary parts of $X_{ij}$ are independent with variance $1/2$ each. Let
$S_n := n^{-1}Y_nY_n^*$, and let $\lambda_j(D)$ denote the $j$-th largest
eigenvalue of a Hermitian matrix $D$. Then, as $p,n \to \infty$ such that $p/n
\to 0$, the empirical distribution of $\{\sqrt{n/p}(\lambda_j(S_n) -
n^{-1}\tr(B_n) \lambda_j(A_{p}))\}_{j=1}^p$ converges a.s. to a nonrandom
probability distribution $F$ on $\mathbb{R}$ which can be expressed as
$$
F(x) = \sum_{j=1}^m c_j F_{sc}(x; \sqrt{\bar{b}_2 c_j} \alpha_j),
~~~\mbox{for}~x \in \mathbb{R},
$$
where $F_{sc}(\cdot;\sigma)$, for any $\sigma > 0$, is defined in
(\ref{eq:scaled_semi_circle_law}).
\end{thm}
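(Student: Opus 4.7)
The plan is to exploit the block structure of $S_n$ in the eigenbasis of $A_p$. Let $V = (V_1, \ldots, V_m)$ be a $p\times p$ unitary matrix whose columns form an orthonormal eigenbasis for $A_p$, where $V_j$ is $p \times p_j$ and its columns span the eigenspace of $\alpha_j$. Then $V^* A_p V = \mathrm{diag}(\alpha_1 I_{p_1}, \ldots, \alpha_m I_{p_m})$, so $V^* S_n V$ has block form with $(j,k)$ block equal to $n^{-1}\sqrt{\alpha_j \alpha_k}\, V_j^* X_n B_n X_n^* V_k$; in particular its $j$-th diagonal block is $T_{n,j} := \alpha_j n^{-1} V_j^* X_n B_n X_n^* V_j$, with $\mathbb{E}(T_{n,j}) = \alpha_j n^{-1}\tr(B_n) I_{p_j}$.

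First I would establish spectrum separation. Under the sub-Gaussian hypothesis and the boundedness of $\|A_p\|$ and $\|B_n\|$, a matrix concentration argument (for instance via Hanson--Wright or matrix Bernstein) yields $\|S_n - n^{-1}\tr(B_n)A_p\| = O(\sqrt{p/n})$ almost surely. Because the eigenvalues of $\mathbb{E}(S_n) = n^{-1}\tr(B_n)A_p$ cluster at the $m$ distinct values $\mu_j^{(n)} := \alpha_j n^{-1}\tr(B_n)$ with pairwise gaps converging to $\bar{b}\min_{j \neq k}|\alpha_j - \alpha_k| > 0$, Weyl's inequality implies that for large $n$ the spectrum of $S_n$ splits into $m$ disjoint clusters, the $j$-th containing exactly $p_j$ eigenvalues all within $o(1)$ of $\mu_j^{(n)}$; the case $\alpha_m = 0$ is handled by the same argument applied to the lowest cluster.

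The crux is a block-perturbation estimate of Wielandt type. For a Hermitian matrix whose spectrum splits into an isolated cluster of multiplicity $r$ with spectral gap $g > 0$ from the rest, the $r$ eigenvalues in that cluster differ from the eigenvalues of the compression onto the associated invariant subspace by a quantity of order $\|\text{off-diagonal}\|^2 / g$. Applying this within $V^* S_n V$, whose off-diagonal blocks have norm bounded by $\|S_n - \mathbb{E}(S_n)\|$, gives
\begin{equation*}
\bigl|\lambda_i(S_n) - \lambda_{i - (p_1+\cdots+p_{j-1})}(T_{n,j})\bigr| = O\bigl(\|S_n - \mathbb{E}(S_n)\|^2\bigr) = O(p/n)
\end{equation*}
for every index $i$ in the $j$-th cluster. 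After multiplication by $\sqrt{n/p}$ this error becomes $O(\sqrt{p/n}) = o(1)$, so the rescaled empirical distributions of the $j$-th cluster of $S_n$ recentred at $\mu_j^{(n)}$ and of $T_{n,j} - \mu_j^{(n)} I_{p_j}$ share the same weak limit.

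Finally, I would invoke Remark \ref{rem:non_Hermitian_root} with $\widetilde V = V_j$ and $q = p_j$: the ESD of $\sqrt{n/p_j}\,(n^{-1} V_j^* X_n B_n X_n^* V_j - n^{-1}\tr(B_n) I_{p_j})$ converges almost surely to $F_{sc}(\cdot;\sqrt{\bar{b}_2})$. Multiplying the matrix by $\alpha_j$ and converting $\sqrt{n/p_j}$ to $\sqrt{n/p}$ via the factor $\sqrt{p_j/p} \to \sqrt{c_j}$ shows the $j$-th cluster contributes $F_{sc}(\cdot;\alpha_j\sqrt{c_j \bar{b}_2})$ in the limit. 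Since $\lambda_i(A_p) = \alpha_j$ for $i$ in the $j$-th cluster, replacing $\mu_j^{(n)}$ by $n^{-1}\tr(B_n)\lambda_i(A_p)$ is free, and weighting by the cluster proportions $p_j/p \to c_j$ yields the mixture $F = \sum_{j=1}^m c_j F_{sc}(\cdot;\alpha_j\sqrt{c_j \bar{b}_2})$. The hardest step will be the quadratic-in-$\|S_n - \mathbb{E}(S_n)\|$ block perturbation bound: the naive Weyl inequality gives only an $O(\sqrt{p/n})$ error, of the same order as the fluctuations themselves, so the Wielandt-type spectral gap argument together with the sub-Gaussian norm bound on $S_n - \mathbb{E}(S_n)$ are both essential.
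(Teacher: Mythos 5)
Your proposal is correct and follows essentially the same route as the paper: rotate into the eigenbasis of $A_p$, use the Wielandt/Eaton--Tyler quadratic spectral-gap bound to reduce the difference between the eigenvalues of $S_n$ and of its block-diagonal compression to $O(\|S_{\text{off-diag}}\|^2)=O_{a.s.}(p/n)$, and then apply Remark~\ref{rem:non_Hermitian_root} block by block with the $\sqrt{p_j/p}\to\sqrt{c_j}$ rescaling. The only cosmetic difference is that you bound all off-diagonal blocks at once via an $\epsilon$-net/Hanson--Wright bound on $\|n^{-1}X_nB_nX_n^*-n^{-1}\tr(B_n)I_p\|$, exploiting $V_j^*V_k=0$ for $j\neq k$, whereas the paper decomposes $S_{21}$ into four pieces (its terms I--IV) and estimates each separately; your unified bound is a modest streamlining of the same underlying estimate.
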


\begin{rem}\label{rem:eigenvalue_fluctuations}
The assumption of sub-Gaussianity of the entries in Theorem
\ref{thm:eigenvalue_fluctuations} is not necessary if $p = o(n^{1/3})$. In that
case, if we only assume the finiteness of $\mathbb{E}|X_{11}|^4$, it can be
shown that the empirical distribution of $\{\sqrt{n/p}(\lambda_j(S_n) -
n^{-1}\tr(B_n) \lambda_j(A_{p}))\}_{j=1}^p$ converges in probability to the
same limit law. This is because, as is seen from the proof given in Section
\ref{sec:proof_eigenvalue_fluctuations}, without loss of generality assuming
$m=2$,  the conclusion follows upon showing that $\sqrt{n/p}\parallel
S_{12}\parallel^2 = o_P(1)$, which can be majorized by $\sqrt{n/p}\parallel
S_{12}\parallel_F^2$, where $\parallel \cdot \parallel_F$ denotes the Frobenius
norm. The latter is $O_P(\sqrt{n/p}(p^2/n)) = o_P(1)$ under the stated
conditions. A stronger conclusion (in the form of a.s. convergence) can be made
with appropriately higher moment conditions.
\end{rem}

%%%%--------------------------------------------------------------------------

\subsection{Application to hypothesis testing}\label{subsec:application}

The results in the previous subsections allow us to develop a test for the
hypothesis that data matrix $Y_n$ has a specific separable covariance
structure. Suppose that the vectorized $Y_n$ has joint covariance $\Sigma_n$.
Then our null hypothesis is that
\begin{equation}\label{eq:null_separable}
H_{0}: \Sigma_n = A_{p} \otimes B_n ~\mbox{with}~ A_{p}=A_{0},
n^{-1}\tr(B_{n})=\bar{b}^0, n^{-1}\tr(B_{n}^2)=\bar{b}_2^0,
\end{equation}
where $A_0$, $\bar{b}^0 > 0$ and $\bar{b}_2^0 \geq (\bar{b}^0)^2$ are
specified. Note that $\bar{b}^0$ and $\bar{b}_2^0$ can be seen as the
(limiting) values of $n^{-1}\tr(B_{0})$ and $n^{-1}\tr(B_{0}^2)$, respectively,
where $A_0 \otimes B_0$ is the covariance of the data $Y_n$ under $H_{0}$.
Thus, $H_{0}$ is a composite hypothesis about $\Sigma_n$. Also note that,
testing for $H_{0}$ is not the same as testing for separability of the data
model since in our setting $A_{p}$ needs to be specified. Later in this
subsection, we discuss potential extensions of the proposed test procedure for
dealing with the null hypothesis of separability, under certain weaker
restrictions on $A_{p}$.

We propose a test statistic that measures the closeness of the empirical
spectrum to the theoretical spectral density under the null hypothesis. If the
data matrix is endowed with the assumed covariance structure in $H_{0}$,
Theorem \ref{thm:main_LSD} %in Section \ref{subsec:analysis LSD}
guarantees the convergence of ESD of $C_n$ to an LSD. In fact, Proposition
\ref{prop:LSD_density}
%based on the results given in Bai and Zhang \cite{BZ} and Silverstein and Choi \cite{SC}, we have
gives an explicit expression for the aforementioned LSD $F$. Equipped with this
result, in this paper, we propose and study the following test statistics based
on the $L^2$ metric:
%a \textit{Cr\'{a}mer-von Mises-type statistic}
\begin{equation}\label{eq:CVM_statistic}
L_{n}:= L_n\left(\hat{F}_{n}(x), F^{A_{0}, B_{0}}(x)\right) = \int
|\hat{F}_{n}(x)-F^{A_{0}, B_{0}}(x)|^{2}dx,
\end{equation}
where $\hat{F}_n$ is the ESD of $C_n$ defined by (\ref{eq:C_n}) when
$(A_{p},B_{n}) = (A_{0},B_{0})$. Another possible test statistic is a
Cr\'{a}mer-von Mises-type statistic
\begin{equation}\label{eq:CVM_alt_statistic}
V_{n}:= V_n\left(\hat{F}_{n}(x), F^{A_{0}, B_{0}}(x)\right) = \int
|\hat{F}_{n}(x)-F^{A_{0}, B_{0}}(x)|^{2}dF^{A_{0},B_{0}}(x).
\end{equation}
A somewhat similar test for the covariance matrix for cross-sectional data with
real entries with i.i.d. column was proposed by  Pan and Gao \cite{PanG}. In
order to carry out the tests based on the statistic $L_n$ or $V_n$, we need to
obtain the distribution of the test statistics under $H_{0}$. At this point, we
do not have any result on the asymptotic distribution of these test statistics.
However, it can be seen that under $H_0$, both test statistics converge to zero
as $n, p \to \infty$. In this paper, we propose a Monte-Carlo approximation of
the null distribution of $L_{n}$. A similar strategy applies to $V_n$.
Implementing these tests require computing the ESD $\hat{F}_n$ of the matrix
$C_{n0}:=\sqrt{n/p} \left(n^{-1} A_{0}^{1/2}X_{n}B_{0}X_{n}^{*}A_{0}^{1/2}-
\bar{b}^0 A_{0}\right)$, which is obtained by setting $A_{p} = A_{0}$ and $B_n
= B_{0}$ in the definition of $C_n$ in (\ref{eq:C_n}), and where $X_n$ is
chosen to have i.i.d. $N(0,1)$ entries. Notice that, $H_{0}$ does not specify
$B_{0}$ completely, but only specifies its first two spectral moments. Thus,
while carrying out this simulation, we need to construct an appropriate $B_{0}$
whose first two spectral moments are $\bar{b}^0$ and $\bar{b}_2^0$
respectively. We construct $B_{0}$ of the form (assuming, for simplicity, $n$
to be even)
$$
B_{0} = \begin{bmatrix} \beta_1 I_{n/2} & \mathbf{0} \\
\mathbf{0} & \beta_2 I_{n/2} \\
\end{bmatrix}
$$
and solve the equations $\bar{b}^0 = 0.5 \beta_1  + 0.5 \beta_2$ and
$\bar{b}_2^0 = 0.5 \beta_1^2  + 0.5 \beta_2^2$ to obtain $\beta_1$ and
$\beta_2$. In Section \ref{sec:simulation} we conduct a simulation study which
shows that the the histogram of the LSD of $C_{n0}$ is very close to the
theoretical density function  \ref{eq:density} of the LSD $F^{A_{0},B_{0}}(x)$
under $H_0$. In addition, the distribution of $L_{n}$ under $H_{0}$ and $H_{1}$
are well-separated as $p, n \to \infty$ and $p/n \to 0$. We do not present
simulation results involving the statistic $V_{n}$ due to space constraints,
even though the qualitative behavior is similar.

Even though the proposed procedure does not test the separability of the
covariance matrix of the data, we comment on the possibility of extending this
test procedure to deal with some special cases of the latter scenario. The
implementation of these is beyond the scope of this paper.  The corresponding
null hypothesis for the test of separability would be: $H_0 : \Sigma_{n} =
A_{p} \otimes B_{n}$ where $A_{p}$ and $B_{n}$ are unknown positive
semi-definite matrices satisfying that the ESD $F^{A_p}$ converges to a
distribution non-degenerate at zero, and $n^{-1} \tr(B_n) \to 1$ and $n^{-1}
\tr(B_n^2) \to \bar{b}_2$ for some $\bar{b}_2 \geq 1$. The requirement $n^{-1}
\tr(B_n) \to 1$ is to ensure identifiability. In this case, under certain
special structural assumptions on $A_{p}$, it may still be possible to obtain
fairly accurate estimates of $A_{p}$ and $\bar{b}_2$, which can then be used in
the expression for $L_n$ or $V_n$ in place of $A_{0}$ and $\bar{b}_2^{0}$ to
construct a test for separability.  One typical assumption in spatio-temporal
statistics is that the process is stationary either in space or time. In the
current setting, if we assume that the process is row-stationary, then the
eigenvectors of $A_{p}$ can be well-approximated in a discrete Fourier basis.
%This assumption makes sense in the setting where the rows of the data matrix
%correspond to spatial locations on a grid.
If in addition, the corresponding spectrum of $A_{p}$ is piecewise constant,
then we can estimate the spectrum of $A_{p}$ from the data as follows. First we
can perform an orthogonal or unitary transformation of the data in the
(approximate) eigen-basis of $A_{p}$.
%Then, if the spectrum of $A_{p}$ is smooth, we can apply a simple smoothing to
%the diagonal elements of the transformed sample covariance matrix. Or, if the spectrum is discrete,
Then, we can apply a clustering procedure, and estimate the distinct
eigenvalues as the means of the individual clusters, and assign the
eigenvectors to these clusters according to the cluster membership of the
coordinates of the rotated data matrix.
%The resulting estimator of $A_{p}$ will be fairly accurate and
%can be used to replace the true $A_{p}$ (under $H_0$) in the test statistic $V_n$.
Another way to broaden the class of models under the null hypothesis is to
remove the specification of $\bar{b}_2$. If either the eigenvalues of $A_{p}$
are known or they can be estimated accurately from the data, subject to some
knowledge about the fourth moment of the entries of the data matrix,
$\bar{b}_2$ can be estimated by making use of the expression for
$\mE(\tr(S_n^2))$ in terms of the first two spectral moments of $A_{p}$ and
$B_{n}$.

If $A_{p}$ is unknown but has a relatively small number of distinct
eigenvalues, those eigenvalues can be estimated as mean or median of the
clusters of eigenvalues of $S_n$, without requiring any knowledge of the
eigenvectors of $A_{p}$, by making use of Theorem
\ref{thm:eigenvalue_fluctuations}.

\subsection{Computation of the density function of the
LSD}\label{subsec:computation_density}

If $F^{A}$ is a finite mixture of point masses, then the density function of
the LSD $F$ in Theorem \ref{thm:main_LSD} can be computed numerically by making
use of Proposition \ref{prop:LSD_density} and Lemma
\ref{lemma:density_support}. This computation is used to simulate the
distribution of the test statistic $L_n$ in Section \ref{sec:simulation}.
According to \ref{prop:LSD_density}, the main ingredient of the computation of
$f$, the p.d.f. of $F$, is the determination of the function $\beta(x) :=
\lim_{z\in \mathbb{C}^+ \to x} \beta(z)$ which solves the equation
(\ref{eq:beta_x_equation}). When $F^{A}$ is a finite mixture of point masses,
the latter reduces to a polynomial in $\beta(x)$. In order to determine $f(x)$,
we need to isolate the roots that satisfy the constraints $\Im\beta(x)\geq 0$,
$x\Re\beta(x)<0$ and  $\omega(x)\leq 1$, where $\omega(x)$ is given by
(\ref{eq:omega_x_equation}), as stated in Proposition \ref{prop:LSD_density}.
Indeed, the support of $\beta(x)$ can be determined by the condition $\omega(x)
= 1$, as stated in Lemma \ref{lemma:density_support}. In practice, we
numerically solve for the appropriate root of $\beta(x)$ for a grid of points
$x$ by searching through all possible solutions of the polynomial satisfied by
$\beta(x)$ and checking the conditions, as well as making use of the continuity
of $\beta(x)$ on each side of the origin. Then we can derive the density
function $f(x)$ by utilizing (\ref{eq:density}).

%%%----------------------------------------------------------------------

%%%----------------------------------------------------------------------

\section{Proof of Theorem \ref{thm:main_LSD}}

Our approach for proving Theorem \ref{thm:main_LSD} is to first restrict to
Gaussian observations and utilize the rank-one perturbation method used in Bai
and Yin \cite{BY} and Pan and Gao \cite{PanG}. However, the decompositions
under the separable case require a slightly different treatment from the
aforementioned references. The extension of the result to non-Gausssian settings is handled in Section
\ref{subsec:nongaussian} through through a use of the Lindeberg principle (see
Chatterjee \cite{Chatterjee}). Another potential route to prove this result is
through the generalized Stein's equations used in Pastur and Shcherbina
\cite{PasturS} and Bao \cite{Bao}.

\subsection{Truncation of the ESD of $A_{p}$}\label{subsec:truncation_F_A}
We begin with a truncation of the spectrum of $A_{p}$. Let $b_0$ be a positive
number such that $\|B_{n}\|\leq b_0$. Define $\bar{b}_n = \tr(B_{n})/n$ and
suppose that $\bar{b}_n \to \bar{b}$ as $n \to \infty$. Let $a_{0} > 0$ be such
that $a_{0}$ is a continuity point of $F^{A}$, the LSD of $A_{p}$. Let
\begin{equation*}
\Lambda_{a_{0}}=\mbox{diag}\left(\lambda_{1}I_{\{\lambda_{1}\leq a_0\}},
\lambda_{2}I_{\{\lambda_{2}\leq a_0\}}, \cdots, \lambda_{p}I_{\{\lambda_{p}\leq
a_0\}}\right).
\end{equation*}
Since $A_{p} = U^{*}\Lambda U$ with $\Lambda=\mbox{diag}(\lambda_{1}, \lambda_{2},
\cdots, \lambda_{p})$, then defining
$\widetilde{A}_{a_0}:=U^{*}\Lambda_{a_0}U$,
$\widetilde{A}_{a_0}^{1/2}:=U^{*}\Lambda_{a_0}^{1/2}U$, and $\widetilde C_{n0}
:=\sqrt{n/p}\left(n^{-1}A_{p}^{1/2}X_{n}B_{n}X_{n}^{*}A_{p}^{1/2}-n^{-1}\tr(B_{n})\widetilde{A}_{a_0}\right)$,
we have
\begin{eqnarray*}
&&\sup_{x}\left|F^{C_{n}}(x)-F^{\widetilde C_{n0}}(x)\right|\\
&&\leq \frac{\bar{b}_{n}}{p}\mbox{Rank}\left(A_{p}-\widetilde{A}_{a_0}\right) =
\frac{\bar{b}_{n}}{p}\sum_{i=1}^{p}I_{\{\lambda_{i}> a_0\}}\rightarrow
\bar{b}\left(1-F^{A}(a_0)\right).
\end{eqnarray*}
Further, defining $\widehat{C}_{n0} :=
\sqrt{\frac{n}{p}}\left(\frac{1}{n}\widetilde{A}_{a_0}^{1/2}X_{n}B_{n}X_{n}^{*}
\widetilde{A}_{a_0}^{1/2}-\frac{\tr(B_{n})}{n}\widetilde{A}_{a_0}\right)$, we
have
\begin{eqnarray*}
&&\sup_{x}\left|F^{\widetilde C_{n0}}(x) - F^{\widehat{C}_{n0}}(x)\right|\\
&&\leq \frac{2}{p} \mbox{Rank}(A_{p}^{1/2}-\widetilde{A}_{a_0}^{1/2})=
\frac{2}{p}\sum_{i=1}^{p} I_{\{\lambda_{i}>a_0\}}\to 2(1-F^{A}(a_0)).
\end{eqnarray*}
By choosing $a_0$ to be large enough, $1-F^{A}(a_0)$ can be made arbitrarily
small. Thus, combining the above two inequalities, we can show that, for any
given $\epsilon > 0$, there exists a large enough $a_0$ such that
\begin{equation*}
\lim\sup_{n\to \infty}
\sup_{x}\left|F^{C_{n}}(x)-F^{\widehat{C}_{n0}}(x)\right| \leq \epsilon \qquad
\mbox{a.s.}
\end{equation*}
Also, in Section \ref{subsec:uniqueness}, we show that the solution of
(\ref{equation_system}) is unique and has a continuous dependence on $F^{A}$.
Thus, since $F^{\widetilde{A}_{a_0}}$ converges to $F^{A}$ in distribution as
$a_0 \to \infty$, in order to prove Theorem \ref{thm:main_LSD}, it is enough to
show that $F^{\widehat{C}_{n0}}$ converges almost surely to $F$, and $F$ has
the Stieltjes transform $s(z)$ determined by (\ref{equation_system}) with
$F^{A} = F^{\widetilde{A}_{a_0}}$, for any fixed positive $a_0$ so that
$F^{\widetilde{A}_{a_0}}$ is not degenerate at zero. For notational
convenience, henceforth, we still use $A_{p}$ and $C_{n}$ instead of
$\widetilde{A}_{a_0}$ and $\widehat{C}_{n0}$, respectively, and simply assume
that of $\parallel A_{p} \parallel \leq a_0$ for an arbitrary positive constant
$a_0$.

\subsection{Expected Stieltjes transforms}\label{subsec:deterministic_part}
In this subsection, we derive asymptotic expansion for $\mathbb{E}(s_n(z))$
when $X_n$ is assumed to have i.i.d. standard normal entries. Let
$A_{p}= U^{*}\Lambda U$ with
$\Lambda=\mbox{diag}(\lambda_{1}, \lambda_{2},\cdots, \lambda_{p})$ denoting the
spectral decomposition of $A_{p}$. Then we have
\begin{eqnarray*}
C_{n}&=&\sqrt{\frac{n}{p}}\left(\frac{1}{n}U^{*}\Lambda^{1/2}UX_{n}B_{n}X_{n}^{*}U^{*}\Lambda^{1/2}U-\frac{1}{n}\tr(B_{n})U^{*}\Lambda U\right)\\
&=& \sqrt{\frac{n}{p}}U^{*}\left(\frac{1}{n}\Lambda^{1/2}UX_{n}B_{n}X_{n}^{*}U^{*}\Lambda^{1/2}-\frac{1}{n}\tr(B_{n})\Lambda\right)U\\
&=& \sqrt{\frac{n}{p}}U^{*}\left(\frac{1}{n}\Lambda^{1/2}\widetilde{X}_{n}B_{n}\widetilde{X}_{n}^{*}\Lambda^{1/2}- \frac{1}{n}\tr(B_{n})\Lambda\right)U\\
&=&\sqrt{\frac{n}{p}}U^{*}(VV^{*}-\bar{b}_{n}\Lambda)U
\end{eqnarray*}
where $\widetilde{X}_{n}= U X_{n}$ and
$V=n^{-1/2}\Lambda^{1/2}\widetilde{X}_{n}B_{n}^{1/2}$. Let $v_{k}=
n^{-1/2}\sqrt{\lambda_{k}}B_{n}^{1/2}\widetilde{x}_{k}$ denote the $k$-th
column of $V^*$, where $\widetilde{x}_{k}$ is the $k$-th column of
$\widetilde{X}_{n}^{*}.$ Note that $\widetilde{X}_{n}$ has i.i.d Gaussian
entries with mean zero and variance one. Moreover, denote by
$\widetilde{X}_{(k)}$ the matrix obtained from $\widetilde{X}_{n}$ with the
$k$-th row replaced by zero. Then
\begin{equation*}
V_{(k)}=(v_{1}^{*},\cdots, v_{k-1}^{*}, 0, v_{k+1}^{*},\cdots, v_{p}^{*})^{*}.
\end{equation*}
We introduce the following quantities:
\begin{eqnarray*}
\omega_{k} &=& \sqrt{\frac{n}{p}}V_{(k)}v_{k}\\
\tau_{kk} &=& \sqrt{\frac{n}{p}}(v_{k}^{*}v_{k}-\bar{b}_{n}\lambda_{k}), \\
Y(z) &=& \sqrt{\frac{n}{p}}\left(VV^{*}-\bar{b}_{n}\Lambda\right)-zI, \\
Y_{k}(z) &=& \sqrt{\frac{n}{p}}\left(V_{(k)}V^{*}-\bar{b}_{n}\Lambda_{(k)}\right)-zI, \\
Y_{(k)}(z) &=& \sqrt{\frac{n}{p}}\left(V_{(k)}V_{(k)}^{*}-\bar{b}_{n}\Lambda_{(k)}\right)-zI,\\
h_{k} &=& \omega_{k}+\tau_{kk}e_{k}.
\end{eqnarray*}
Where $e_{k}$ is supposed to be a canonical unit $p\times 1$ vector with the $k$-th element being $1$ and all others $0$.
Then, notice that $Y(z) = \sum_{k=1}^{p}e_{k}h_{k}^{*} = Y_k(z) +
e_{k}h_{k}^{*}$, for all $k$. Thus, $C_{n}=U^{*} Y(z) U =
U^{*}\left(\sum_{k=1}^{p}e_{k}h_{k}^{*}\right)U$. Since,
$(C_{n}-zI)^{-1}=-z^{-1}I+z^{-1}C_{n}(C_{n}-zI)^{-1}$, we have
\begin{eqnarray}\label{expression for ST}
s_{n}(z)&=& \frac{1}{p}\tr(C_{n}-zI)^{-1} \nonumber\\
&=& \frac{z^{-1}}{p}\tr\left(C_{n}(C_{n}-zI)^{-1}\right)-z^{-1} \nonumber\\
&=&\frac{z^{-1}}{p}\tr\left(\sum_{k=1}^{p}e_{k}h_{k}^{*}\left(\sqrt{\frac{n}{p}}(VV^{*}-\bar{b}_{n}\Lambda)-zI\right)^{-1}\right)-z^{-1}\nonumber\\
&=&\frac{z^{-1}}{p}\tr\left(\sum_{k=1}^{p}e_{k}h_{k}^{*}\left(Y_{k}(z) + e_{k}h_{k}^{*}\right)^{-1}\right)-z^{-1}\nonumber\\
&=&\frac{z^{-1}}{p}\tr\left(\sum_{k=1}^{p}e_{k}h_{k}^{*}\left(Y_{k}^{-1}(z)-\frac{Y_{k}^{-1}(z)e_{k}h_{k}^{*}Y_{k}^{-1}(z)}{1+h_{k}^{*}Y_{k}^{-1}(z)e_{k}}\right)\right)-z^{-1}\nonumber\\
&=&\frac{z^{-1}}{p}\sum_{k=1}^{p}\frac{h_{k}^{*}Y_{k}^{-1}(z)e_{k}}{1+h_{k}^{*}Y_{k}^{-1}(z)e_{k}}-z^{-1}\nonumber\\
&=&-\frac{z^{-1}}{p}\sum_{k=1}^{p}\frac{1}{1+h_{k}^{*}Y_{k}^{-1}(z)e_{k}}
\end{eqnarray}
From the structure of $Y_{k}(z), Y_{{(k)}}(z)$ and $\omega_{k}$, we observe
that
\begin{equation}\label{eq:Y_identity}
Y_{k}^{-1}(z)=\left(Y_{(k)}(z)+\omega_{k}e_{k}^{T}\right)^{-1},~~~
Y_{(k)}^{-1}(z)e_{k}=-z^{-1} e_{k},~~~ ~\omega_{k}^*e_{k}=e_{k}^{T}\omega_{k} =
\sqrt{\frac{n}{p}}e_{k}^{T}V_{(k)}v_{k}=0.
\end{equation}
For any non-negative definite $p\times p$ Hermitian matrix $D$, define
$\underline{D}=UDU^{*}=\{\underline{d}_{ij}\}$. Then it follows that
\begin{eqnarray}\label{plug in formula}
h_{k}^{*}Y_{k}^{-1}(z)\underline{D}e_{k} &=&
h_{k}^{*}\left(Y_{(k)}^{-1}(z)-\frac{Y_{(k)}^{-1}(z)\omega_{k}e_{k}^{T}Y_{(k)}^{-1}(z)}
{1+e_{k}^{T}Y_{(k)}^{-1}(z)\omega_{k}}\right)\underline{D}e_{k}\nonumber\\
&=&h_{k}^{*}\left(Y_{(k)}^{-1}(z) + \frac{1}{z}Y_{(k)}^{-1}(z)\omega_{k}e_{k}^{T}\right)\underline{D}e_{k}\nonumber\\
&=&h_{k}^{*}Y_{(k)}^{-1}(z)\underline{D}e_{k} + \frac{1}{z}h_{k}^{*}Y_{(k)}^{-1}(z)\omega_{k}e_{k}^{T}\underline{D}e_{k}\nonumber\\
&=&\frac{1}{z}\left[\underline{d}_{kk}\omega_{k}^{*}Y_{(k)}^{-1}(z)\omega_{k}-\tau_{kk}\underline{d}_{kk}\right]
+ \omega_{k}^{*}Y_{(k)}^{-1}(z)\underline{D}e_{k}\nonumber\\
&:=& \Rmnum{1}+\Rmnum{2}.
\end{eqnarray}
When $D=I$, the term $\Rmnum{2}$ in (\ref{plug in formula}) equals zero since
by (\ref{eq:Y_identity}),
$\omega_{k}^*Y_{(k)}^{-1}(z)e_{k}=-z^{-1}\omega_{k}^*e_{k}=0$. Plugging this
into (\ref{expression for ST}), and using $\underline{d}_{kk} = 1$, we get
\begin{equation}\label{final equation for expression of ST}
s_{n}(z)=-\frac{1}{p}\sum_{k=1}^{p}\frac{1}{z-\tau_{kk}+\omega_{k}^{*}Y_{(k)}^{-1}(z)\omega_{k}}.
\end{equation}
Moreover, with the expression given by (\ref{expression for ST}) and (\ref{plug
in formula}), we similarly have
\begin{eqnarray}\label{eq:trace_resolvent_D}
\frac{1}{p}\tr\left((C_{n}-zI)^{-1}D\right)
&=& \frac{z^{-1}}{p}\tr\left(C_{n}(C_{n}-zI)^{-1}D\right)-\frac{z^{-1}}{p} \tr(D) \nonumber\\
&=&\frac{z^{-1}}{p}\sum_{k=1}^{p}\frac{h_{k}^{*}Y_{k}^{-1}(z)\underline{D}e_{k}}{1+h_{k}^{*}Y_{k}^{-1}(z)e_{k}}
-\frac{z^{-1}}{p}\tr(\underline{D}) \nonumber\\
&=&\frac{z^{-1}}{p}\sum_{k=1}^{p}\frac{\underline{d}_{kk}\omega_{k}^{*}Y_{(k)}^{-1}(z)\omega_{k}-\tau_{kk}\underline{d}_{kk}}
{z-\tau_{kk}+\omega_{k}^{*}Y_{(k)}^{-1}(z)\omega_{k}}-\frac{z^{-1}}{p}\tr(\underline{D}).
\end{eqnarray}
Define
\begin{equation}\label{eq:beta_n_def}
\beta_{n}(z)=\frac{1}{p}\tr\left((C_{n}-zI)^{-1}A_{p}\right), ~~~z\in
\mathbb{C}^+.
\end{equation}
When $D=A_{p}$, so that $\underline{D}=\Lambda=diag(\lambda_{1}, \lambda_{2},
\cdots, \lambda_{p})$, from (\ref{eq:trace_resolvent_D}), we get
\begin{eqnarray}\label{eq:beta_n_repr}
\beta_{n}(z)&=&\frac{z^{-1}}{p}\tr\left(\sum_{k=1}^{p}\frac{h_{k}^{*}Y_{k}^{-1}(z)\Lambda
e_{k}}{1+h_{k}^{*}Y_{k}^{-1}(z)e_{k}}\right)
-\frac{z^{-1}}{p}\tr(\Lambda)\nonumber\\
&=&
\frac{z^{-1}}{p}\sum_{k=1}^{p}\frac{\lambda_{k}\omega_{k}^{*}Y_{(k)}^{-1}(z)\omega_{k}-\tau_{kk}\lambda_{k}}
{z-\tau_{kk}+\omega_{k}^{*}Y_{(k)}^{-1}(z)\omega_{k}}-\frac{z^{-1}}{p}\sum_{k=1}^{p}\lambda_{k}\nonumber\\
&=&
-\frac{1}{p}\sum_{k=1}^{p}\frac{\lambda_{k}}{z-\tau_{kk}+\omega_{k}^{*}Y_{(k)}^{-1}(z)\omega_{k}}~.
\end{eqnarray}
In order to  derive explicit expressions for $s_{n}(z)$ and $\beta_{n}(z)$, we
still need a further approximation of
$\omega_{k}^{*}Y_{(k)}^{-1}(z)\omega_{k}$. Indeed,
\begin{eqnarray}\label{eq:quadratic term}
\omega_{k}^{*}Y_{(k)}^{-1}(z)\omega_{k}&=&\frac{n}{p}v_{k}^{*}V_{(k)}^{*}Y_{(k)}^{-1}(z)V_{(k)}v_{k} \nonumber\\
&=&\frac{\lambda_{k}}{p}\tr\left(V_{(k)}B_{n}^{1/2}\widetilde{x}_{k}\widetilde{x}_{k}^{*}B_{n}^{1/2}V_{(k)}^{*}Y_{(k)}^{-1}(z)\right) \nonumber\\
&=&\frac{\lambda_{k}}{p}\tr\left(V_{(k)}B_{n}V_{(k)}^{*}Y_{(k)}^{-1}(z)\right) + d_{k}^{(1)} \nonumber\\
&=&\frac{\lambda_{k}}{pn}\tr\left(\Lambda^{1/2}\widetilde{X}_{(k)}B_{n}^{2}\widetilde{X}_{(k)}^{*}\Lambda^{1/2}Y_{(k)}^{-1}(z)\right) + d_{k}^{(1)}\nonumber\\
&=&\frac{\lambda_{k}}{pn}\sum_{i,j\neq k}\left(\sqrt{\lambda_{i}\lambda_{j}}\widetilde{x}_{i}^{*}B_{n}^{2}\widetilde{x}_{j}\right) \left(Y_{(k)}^{-1}(z)\right)_{ji} + d_{k}^{(1)}\nonumber\\
&=&\frac{\lambda_{k}\tr(B_{n}^{2})}{pn}\sum_{i\neq k}\left(\lambda_{i}(Y_{(k)}^{-1}(z))_{ii}\right) + d_{k}^{(2)}\nonumber\\
&=&\bar{b}_{2}(n)\frac{\lambda_{k}}{p}\tr\left(Y_{(k)}^{-1}(z)\Lambda_{(k)}\right)
+d_{k}^{(2)}
\end{eqnarray}
where
%$\bar{b}_{2}(n)=: \frac{1}{n}\tr(B_{n}^{2})\stackrel{n\to \infty}{\longrightarrow}\bar{b}_{2}$ and
$\mE(d_{k}^{(1)})=0$ and $\mE(d_{k}^{(2)})=0$. Note that
\begin{equation*}
\frac{1}{p}\tr\left(Y_{(k)}^{-1}(z)\Lambda_{(k)}\right)\approx
\frac{1}{p}\tr\left(Y^{-1}(z)\Lambda\right) =
\frac{1}{p}\tr\left((C_{n}-zI)^{-1}A_{p}\right)=\beta_{n}(z),
\end{equation*}
which shows that the term $\omega_k^* Y_{(k)}^{-1}(z) \omega_k$ in
(\ref{eq:beta_n_repr}) can be approximated by $\bar b_2(n) \lambda_k
\mE(\beta_n(z))$. Hence we can derive convenient representations for
$\mE(s_{n}(z))$ and $\mE(\beta_{n}(z))$ though this approximation and show that
the remainder terms are negligible.

Let
\begin{equation*}
\epsilon_{k}
:=\bar{b}_{2}(n)\lambda_{k}\mE(\beta_{n}(z))-\omega_{k}^{*}Y_{(k)}^{-1}(z)\omega_{k}+\tau_{kk}.
\end{equation*}
Then, from (\ref{eq:beta_n_repr}) we can write
\begin{equation*}
\beta_{n}(z)=\frac{1}{p}\sum_{k=1}^{p}\frac{\lambda_{k}}{-z-\bar{b}_{2}(n)\lambda_{k}\mE(\beta_{n}(z))
+\epsilon_{k}}.
\end{equation*}
Taking expectation on both sides,
\begin{equation}\label{expectation equation of beta}
\mE(\beta_{n}(z))=\frac{1}{p}\sum_{k=1}^{p}\frac{\lambda_{k}}{-z-\bar{b}_{2}(n)\lambda_{k}\mE(\beta_{n}(z))}+\delta_{n}\\
\end{equation}
where
\begin{equation}\label{eq:delta_n_def}
\delta_{n} = -\frac{1}{p}\sum_{k=1}^{p}\mE\left(\frac{\lambda_{k}
\epsilon_{k}}{\left(-z-\bar{b}_{2}(n)\lambda_{k}\mE(\beta_n(z))\right)
\left(-z-\bar{b}_{2}(n)\lambda_{k}\mE(\beta_{n}(z))+\epsilon_{k}\right)}\right).
\end{equation}
By (\ref{expectation equation of beta}),  to show the convergence of the
expected Stieltjes transform to $\beta(z)$ satisfying (\ref{equation_system}),
it suffices to show that $\delta_{n}\to 0$.
%so that $\mE(\beta_{n}(z))\to \beta(z)$ which satisfies (\ref{equation_system})  and (\ref{expectation equation of beta}).
Rewrite $\delta_{n}$ as
\begin{eqnarray*}
\delta_{n} &=&-\frac{1}{p}\sum_{k=1}^{p}\frac{\lambda_{k}\mE(\epsilon_{k})}
{\left(z+\bar{b}_{2}(n)\lambda_{k}\mE(\beta_{n}(z))\right)^{2}} \nonumber\\
&& ~~+ \frac{1}{p}\sum_{k=1}^{p}\mE
\left(\frac{\lambda_{k}\epsilon_{k}^{2}}{\left(z+\bar{b}_{2}(n)\lambda_{k}\mE(\beta_{n}(z))\right)^{2}
\left(-z-\bar{b}_{2}(n)\lambda_{k}\mE(\beta_{n}(z))+\epsilon_{k}\right)}\right)\\
&=& d_{1}+d_{2}.
\end{eqnarray*}
First, by (\ref{eq:quadratic term}), the fact that $\mathbb{E}(d_k^{(2)}) = 0$,
and (\ref{eq:Y_identity}),
\begin{eqnarray}\label{eq:E_epsilon_k_bound}
|\mE(\epsilon_{k})| &=& \frac{\bar{b}_{2}(n)\lambda_{k}}{p}\left|\mE
\left(\tr\left[(C_{n}-zI)^{-1}\Lambda\right]-
\mE\tr\left[Y_{(k)}^{-1}(z)\Lambda_{(k)}\right]\right)\right| \nonumber\\
&=& \frac{\bar{b}_{2}(n)\lambda_{k}}{p}\left|\mE\left( \tr\left[Y^{-1}(z)\Lambda\right]
-\tr\left[Y^{-1}_{(k)}(z)(\Lambda-\lambda_{k}e_{k}e_{k}^{T})\right]\right) \right| \nonumber\\
&=& \frac{\bar{b}_{2}(n)\lambda_{k}}{p}\left|\mE
\tr\left[\left((Y^{-1}(z)-Y_{(k)}^{-1}(z)\right)\Lambda\right]
+ \lambda_{k} \mE  \left(e_{k}^{T} Y_{(k)}^{-1}(z)e_{k}\right)\right| \nonumber\\
&\leq& \frac{\bar{b}_{2}(n)\lambda_{k}}{p}\mE\left|\tr\left[\left((Y^{-1}(z)-Y_{(k)}^{-1}(z)\right)\Lambda\right] \right|
+\frac{\bar{b}_{2}(n)\lambda_k^2}{p|z|} \nonumber\\
&\leq& \frac{M}{p}~,
\end{eqnarray}
which follows from the fact that
\begin{equation}\label{eq:Y_eh_Y_lambda_bound}
\frac{1}{p}\mE\left|\tr\left[\left((Y^{-1}(z)-Y_{(k)}^{-1}(z)\right)\Lambda\right]\right|\leq
\frac{M}{p},
\end{equation}
(see Appendix \ref{attach_estimation}) and that
$(\bar{b}_{2}(n)\lambda_k^2)/(p|z|)\leq M/p$ since $\max_k |\lambda_k| \leq
a_0$ and $\bar{b}_{2}(n) \to \bar{b}_2 < \infty$. Note that
\begin{equation*}
\left|z+\bar{b}_{2}(n)\lambda_{k}\mE \beta_n(z)\right|\geq \Im
(z+\bar{b}_{2}(n)\lambda_{k}\mE \beta_{n}(z))\geq
v+\lambda_k\bar{b}_{2}(n)\mE(\Im \beta_{n}(z))\geq v.
\end{equation*}
Thus, combining with (\ref{eq:E_epsilon_k_bound}), we conclude that $|d_{1}|\to
0$ as $n \to \infty$.

On the other hand,
\begin{equation*}
\left|-z-\bar{b}_{2}(n)\lambda_{k}\mE
\beta_n(z)+\epsilon_{k}\right|=\left|-z-\omega_{k}^{*}Y_{(k)}^{-1}(z)\omega+\tau_{kk}\right|\geq
\Im(z+\omega_{k}^{*}Y_{(k)}^{-1}(z)\omega_{k})\geq v.
\end{equation*}
Hence, to derive $d_{2}\to 0,$ we only need to prove $\mE |\epsilon_{k}|^2\to
0$.  Let
\begin{eqnarray*}
d_{3}&=&\mE|\epsilon_{k}-\mE\epsilon_{k}|^{2}\\
&=&\mE\left|-\omega_{k}^{*}Y_{(k)}^{-1}(z)\omega_{k}+\tau_{kk}-\mE  \omega_{k}^{*}Y_{(k)}^{-1}(z)\omega_{k}\right|^2\\
&\leq& \mE\left|-\omega_{k}^{*}Y_{(k)}^{-1}(z)\omega_{k}+\tau_{kk}-\bar{b}_{2}(n)\frac{\lambda_{k}}{p} tr\left[Y^{-1}_{(k)}(z)\Lambda_{(k)}\right]\right|^2\\
&+&\mE\left|\bar{b}_{2}(n)\frac{\lambda_{k}}{p}tr\left[Y^{-1}_{(k)}(z)\Lambda_{(k)}\right]-\bar{b}_{2}(n)\frac{\lambda_{k}}{p}\mE tr\left[Y^{-1}_{(k)}(z)\Lambda_{(k)}\right]\right|^2\\
&:=& d_{31}+d_{32}
\end{eqnarray*}
where $d_{31}\leq M/p$ and $d_{32}\leq M/p^{3/2}$ (see Appendix \ref{d_31} and
\ref{d_32} for details). Then, we can conclude that $\mE |\epsilon_{k}|^{2}\to
0$ based on the fact that $\mE |\epsilon_{k}|^{2} = \mE|\epsilon_{k}-\mE
(\epsilon_{k})|^2+|\mE (\epsilon_{k})|^2$.

Repeating the same arguments, we can derive the following equation for
$\mE(s_{n}(z))$ given by
\begin{equation}\label{eg:expectation_s_n_z}
\mE(s_{n}(z))=\frac{1}{p}\sum_{k=1}^{p}\frac{1}{-z-\bar{b}_{2}\lambda_{k}\mE(\beta_{n}(z))}+\delta_{n}^{'}
\end{equation}
where $\delta_{n}^{'} \to 0$ as $n \to \infty$.

%%%----------------------------------------------------------------------

\subsection{Convergence of $s_{n}(z)-\mE(s_{n}(z))$}\label{subsec:random part}

We proceed to the almost sure convergence of the random parts, i.e., for any
$z\in\mC^{+}$
\begin{equation}\label{ESystem}
\begin{cases}
s_{n}(z)-\mE(s_{n}(z))\stackrel{a.s.}{\longrightarrow} 0\\
\beta_{n}(z)-\mE(\beta_{n}(z))\stackrel{a.s.}{\longrightarrow} 0.
\end{cases}
\end{equation}
when the entries of $X_{n}$ are i.i.d. standardized random variables with
arbitrary distributions. To derive above almost sure convergence, we first get
a concentration inequality by using the following lemma (known as McDiarmid's
inequality) and then finish the proof through Borel-Cantelli lemma.

\begin{lemma}\label{lem:McDiarmid}
(McDiarmid inequality \cite{Mcdiarmid} :) Let $X_{1}, X_{2}, \cdots, X_{m}$ be
independent random vectors taking values in $\mathcal{X}$. Suppose that
$f:\mathcal{X}^{k}\to \mR$ is a function of $X_{1}, X_{2}, \cdots, X_{m}$
satisfying $\forall  x_{1}, \cdots, x_{m}, x_{i}^{'},$
$$
|f(x_1,\cdots,x_{i},\cdots, x_{m})-f(x_1, \cdots, x_{i}^{'}, \cdots,
x_{m})|\leq c_{i},
$$
Then for all $\epsilon>0,$
\begin{equation}\label{MC}
\mP\left(|f(x_{1},\cdots, x_{m})-\mE f(x_{1}, \cdots,
x_{m})|>\epsilon\right)\leq 2
\exp\left(-\frac{2\epsilon^{2}}{\sum_{i=1}^{m}c_{i}^{2}}\right).
\end{equation}
\end{lemma}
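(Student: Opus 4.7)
The plan is to prove the bounded-difference (McDiarmid) inequality via the classical Doob-martingale method, reducing the claim to an Azuma-Hoeffding-style bound on a sum of bounded martingale differences. First I would form the Doob martingale associated with $f(X_1,\ldots,X_m)$: set $Z_0 = \mE f(X_1,\ldots,X_m)$, $Z_m = f(X_1,\ldots,X_m)$, and for $1 \leq i \leq m$,
$$
Z_i = \mE\bigl[f(X_1,\ldots,X_m) \mid X_1,\ldots,X_i\bigr],
$$
so that $(Z_i)_{i=0}^m$ is a martingale with respect to $\mathcal{F}_i = \sigma(X_1,\ldots,X_i)$ and the quantity of interest telescopes as $f - \mE f = \sum_{i=1}^m D_i$, where $D_i := Z_i - Z_{i-1}$.

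The key step is to show that, conditionally on $\mathcal{F}_{i-1}$, each increment $D_i$ takes values in an interval of length at most $c_i$. By independence of $X_1,\ldots,X_m$, the conditional expectations above can be written as integrations against the joint law of the independent tail $(X_{i+1},\ldots,X_m)$. Define
$$
L_i = \inf_{x}\mE\bigl[f \mid X_1,\ldots,X_{i-1}, X_i = x\bigr], \qquad U_i = \sup_{x}\mE\bigl[f \mid X_1,\ldots,X_{i-1}, X_i = x\bigr],
$$
where the inf and sup are taken in the essential sense with respect to the law of $X_i$. Fubini then allows one to apply the bounded-difference hypothesis inside the integral over $(X_{i+1},\ldots,X_m)$, giving $U_i - L_i \leq c_i$ $\mathcal{F}_{i-1}$-almost surely. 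Since $D_i \in [L_i - Z_{i-1}, U_i - Z_{i-1}]$ and $\mE[D_i \mid \mathcal{F}_{i-1}] = 0$, each $D_i$ is a centered increment confined to an $\mathcal{F}_{i-1}$-measurable interval of length $c_i$.

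Then I would apply Hoeffding's lemma to the conditional moment generating function: for any $\lambda > 0$,
$$
\mE\bigl[e^{\lambda D_i} \mid \mathcal{F}_{i-1}\bigr] \leq \exp(\lambda^2 c_i^2/8).
$$
Iterating via the tower property yields $\mE[\exp(\lambda(Z_m - Z_0))] \leq \exp(\lambda^2 \sum_{i=1}^m c_i^2 / 8)$. A Chernoff-Markov bound with the optimal choice $\lambda = 4\epsilon / \sum_i c_i^2$ produces the one-sided tail $\mP(f - \mE f > \epsilon) \leq \exp(-2\epsilon^2 / \sum_i c_i^2)$; applying the identical argument to $-f$ and a union bound delivers the symmetric two-sided estimate stated in \eqref{MC}.

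The main technical obstacle is the essential-supremum/infimum argument that converts the pointwise bounded-difference hypothesis into the required conditional range bound on $D_i$: one must justify that, after conditioning on $\mathcal{F}_{i-1}$, the only remaining randomness in $D_i$ comes through $X_i$ (via an integral over the independent tail), so that the bounded-difference assumption can be pulled inside the integral by Fubini. Once this measurability point is settled, Hoeffding's lemma and the Chernoff bound are entirely routine, and the lemma follows.
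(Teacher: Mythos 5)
The paper does not prove Lemma \ref{lem:McDiarmid}; it is stated as a known result with a citation to McDiarmid \cite{Mcdiarmid}, so there is no internal proof to compare against. Your reconstruction is the standard textbook derivation (Doob martingale decomposition, conditional range bound from the bounded-difference hypothesis via integration over the independent tail, Hoeffding's lemma on the conditional MGF, tower property, Chernoff bound with $\lambda = 4\epsilon/\sum_i c_i^2$, then a union bound for the two-sided estimate), and it is correct. The only point worth emphasizing is the one you already flag: after conditioning on $\mathcal{F}_{i-1}$, the increment $D_i$ is a deterministic function of $X_i$ alone because the dependence on $(X_{i+1},\dots,X_m)$ has been integrated out using independence, so the bounded-difference hypothesis passes through the integral by Fubini and gives $U_i - L_i \leq c_i$ almost surely; once that measurability point is in place the rest is routine.
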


Even though Lemma \ref{lem:McDiarmid} is applicable to real-valued functions,
we use it to obtain concentration bounds for $s_n(z)$ (respectively,
$\beta_n(z)$) by applying this result separately to the functions $\Re(s_n(z))$
and  $\Im(s_n(z))$. Thus, we treat $C_{n}$ a function of $X_n$. Let the
independent rows of $X_n$ (written as column vectors) be denoted by
$\mbf{x}_1^*, \cdots, \mbf{x}_{p}^*$.
%For $i\in \{1,2,\cdots, p\}, j\in \{1,2,\cdots, n\}$.
Let
$$
X_{(i)} = X_{n} - e_i e_i^* X_n = X_{n} - e_i \mbf{x}_i^*,
~~~i=1,\ldots,p.
$$
Thus, $X_{(i)}$ is the $p \times n$ matrix obtained by removing the $i$-th row
from $X_n$ and replacing it by zeros. Define
$$
C_{(i)} = \frac{1}{\sqrt{np}} A_{p}^{1/2} X_{(i)} B_{n} X_{(i)}^* A_{p}^{1/2} -
\frac{1}{\sqrt{np}} \tr(B_{n}) A_{p}.
$$
Then,
\begin{eqnarray}\label{eq:C_diff}
C_n &=& C_{(i)} + \frac{1}{\sqrt{np}} A_{p}^{1/2} e_i \mathbf{x}_i^* B_{n}
X_{(i)}^* A_{p}^{1/2} + \frac{1}{\sqrt{np}} A_{p}^{1/2} X_{(i)} B_{n} \mbf{x}_i
e_i^* A_{p}^{1/2} + \frac{1}{\sqrt{np}}
\mbf{x}_i^* B_{n} \mbf{x}_i A_{p}^{1/2} e_i e_i^* A_{p}^{1/2} \nonumber\\
&=& C_{(i)} + \mbf{a}_i \mbf{y}_i^* + \mbf{y}_i \mbf{a}_i^* + w_i \mbf{a}_i
\mbf{a}_i^*,
\end{eqnarray}
where
$$
\mbf{a}_i = A_{p}^{1/2} e_i, \qquad \mbf{y}_i = \frac{1}{\sqrt{np}}
A_{p}^{1/2} X_{(i)} B_{n} \mbf{x}_i, \qquad w_i = \frac{1}{\sqrt{np}}
\mbf{x}_i^* B_{n} \mbf{x}_i.
$$
We would like to use McDiarmid's inequality (Lemma \ref{lem:McDiarmid}) to
obtain bounds for  $|s_n(z) - \mathbb{E}(s_n(z))|$ and $|\beta_n(z) -
\mathbb{E}(\beta_n(z))|$. In this direction, we first obtain an appropriate
bound for $|p^{-1}\tr((C_n - zI)^{-1}H) - p^{-1}\tr((C_{(i)} - zI)^{-1}H)|$
where $H$ is an arbitrary $p\times p$ Hermitian matrix of bounded norm. $H=I_p$
corresponds to $s_n(z)$ and $H = A_{p}$ corresponds to $\beta_n(z)$. As a first
step, observe that we can write
\begin{equation}\label{eq:y_a_matrix}
\mbf{a}_i \mbf{y}_i^* + \mbf{y}_i \mbf{a}_i^* = \mbf{u}_i \mbf{u}_i^* -
\mbf{v}_i \mbf{v}_i^*, \qquad \mbox{where} ~~\mbf{u}_i = \frac{1}{\sqrt{2}}
(\mbf{a}_i  + \mbf{y}_i)~~\mbox{and}~~ \mbf{v}_i = \frac{1}{\sqrt{2}}
(\mbf{a}_i  - \mbf{y}_i).
\end{equation}
Next, define $D_{1i} = C_{(i)} + \mbf{u}_i \mbf{u}_i^*$ and $D_{2i} = D_{1i} -
\mbf{v}_i \mbf{v}_i^*$ so that $D_{1i} = D_{2i} + \mbf{v}_i \mbf{v}_i^*$. Then,
from (\ref{eq:C_diff}), we have $C_n = D_{2i} + w_i \mbf{a}_i \mbf{a}_i^*$.
Therefore,
\begin{eqnarray}\label{eg:difference_resolvent_deleting_kth_row}
&& \tr((C_n - zI)^{-1} H) - \tr((C_{(i)} - z I)^{-1} H) \nonumber\\
&=& \left[\tr((C_n - zI)^{-1}H) - \tr((D_{2i} - z I)^{-1}H) \right] +
\left[\tr((D_{2i} - zI)^{-1}H) - \tr((D_{1i} - z I)^{-1}H)\right]
\nonumber\\
&& ~~~ + \left[\tr((D_{1i} - zI)^{-1}H) - \tr((C_{(i)} - z I)^{-1}H)\right] \nonumber\\
&=& \frac{w_i \mbf{a}_i^* (D_{2i} - z I)^{-1} H (D_{2i} - z I)^{-1}
\mbf{a}_i}{1+ w_i  \mbf{a}_i^* (D_{2i} - z I)^{-1} \mbf{a}_i} -
\frac{\mbf{v}_i^* (D_{1i} - z I)^{-1}H (D_{1i} - z I)^{-1}\mbf{v}_i}{1+
\mbf{v}_i^* (D_{1i} - z I)^{-1} \mbf{v}_i} \nonumber\\
&& ~~~ + \frac{\mbf{u}_i^* (C_{(i)} - z I)^{-1} H (C_{(i)} - z I)^{-1}
\mbf{u}_i}{1+ \mbf{u}_i^* (C_{(i)} - z I)^{-1} \mbf{u}_i}~.
\end{eqnarray}
Since $w_i \geq 0$ and $D_{2i}$ and $C_{(i)}$ are Hermitian matrices, by Lemma
\ref{lemma:quad_bound}, each of the terms in the last expression is bounded by
$\parallel H \parallel/v$ where $v = \Im(z)$. Thus, taking $H = I_p$ and $H =
A_{p}$, respectively, we have the bounds
$$
\left|\frac{1}{p} \tr((C_n - zI)^{-1}) - \frac{1}{p}\tr((C_n' - z
I)^{-1})\right| \leq \frac{6}{pv} =: c_{0,p}
$$
and
$$
\left|\frac{1}{p} \tr((C_n - zI)^{-1}A_{p}) - \frac{1}{p}\tr((C_n' - z
I)^{-1}A_{p})\right| \leq \frac{6a_0}{pv} =: c_{1,p},
$$
where $C_n'$ is obtained from $C_n$ by replacing $\mbf{x}^*_i$, the $i$-th row
of $X$ by an independent copy, $\mbf{x}_i'$, say, for any $i=1,\ldots,p$. Hence
by Lemma \ref{lem:McDiarmid}, we have for any $\epsilon >0,$
\begin{equation}\label{eq:s_n_concentration}
\mathbb{P}(|s_n(z) - \mathbb{E}(s_n(z))| > \epsilon) \leq 4
\exp\left(-\frac{2\epsilon^2}{p c_{0,p}^2}\right) = 4  \exp\left(-\frac{p
v^2\epsilon^2}{18}\right),
\end{equation}
and
\begin{equation}\label{eq:beta_n_concentration}
\mathbb{P}(|\beta_n(z) - \mathbb{E}(\beta_n(z))| > \epsilon) \leq 4
\exp\left(-\frac{2\epsilon^2}{p c_{1,p}^2}\right) = 4  \exp\left(-\frac{p
v^2\epsilon^2}{18a_0^2}\right).
\end{equation}
Thus, by Borel-Cantelli lemma, $s_n(z) - \mathbb{E}(s_n(z))
\stackrel{a.s.}{\longrightarrow}  0$ and $\beta_n(z) - \mathbb{E}(\beta_n(z))
\stackrel{a.s.}{\longrightarrow}  0$ as $p \to \infty$.

%%%--------------------------------------------------------------------------------------------------------
\subsection{Existence and uniqueness}\label{subsec:uniqueness}

In this subsection, we prove the existence and uniqueness of a solution to
(\ref{equation_system}) and its continuous dependence on $F^{A}$. Assuming
first that this is established, we show that $\beta_n(z)
\stackrel{a.s.}{\longrightarrow} \beta(z)$ and $s_n(z)
\stackrel{a.s.}{\longrightarrow} s(z)$ for all $z \in \mathbb{C}^+$. Since
$\mE(\beta_n(z))$ is bounded for $z \in \mathbb{C}^+$, by considering any
subsequence such that $\mE(\beta_{n}(z))$ converges, from (\ref{expectation
equation of beta}), using the Dominated Convergence Theorem, we obtain that
$\mathbb{E}(\beta_n(z))$ converges to  $\beta(z)$ satisfying
(\ref{equation_system}). Then by the fact that $\beta_n(z) - \mE(\beta_{n}(z))
\stackrel{a.s.}{\longrightarrow} 0$, we establish the first assertion. Again,
since $\mE(s_n(z))$ is bounded and $\mathbb{E}(\beta_n(z)) \to \beta(z)$, by
the Dominated Convergence Theorem and using (\ref{eg:expectation_s_n_z}),
$\mathbb{E}(s_n(z)) \to s(z)$, which results in the second assertion by
invoking the fact that $s_n(z) - \mE(s_{n}(z)) \stackrel{a.s.}{\longrightarrow}
0$. Note that this completes the proof of Theorem \ref{thm:main_LSD} when the
entries of $X_{n}$ are i.i.d. standard Gaussian.

In order to establish the existence and uniqueness of a solution of
(\ref{equation_system}), first use the equation for $\beta(z)$ to write
\begin{equation}
s(z)= -\int\frac{dF^{A}(a)}{z + \bar{b}_2a \beta(z)} = \int
\frac{dF^{A}(a)}{-z+\bar{b}_{2}a\int\frac{tdF^{A}(t)}{z+\bar{b}_{2}t\beta(z)}}.
\end{equation}
The two sides of the last equality gives the following equivalent
representation of  (\ref{equation_system}):
\begin{equation}\label{equation_system_combined}
\left(\beta(z)+\int \frac{adF^{A}(a)}{z+\bar{b}_{2}a\beta(z)}\right)\left(\int
\frac{\bar{b}_{2}t}{(z+\bar{b}_{2}t\beta(z))(\bar{b}_{2}t\int
\frac{adF^{A}(a)}{z+\bar{b}_{2}a\beta(z)}-z)}dF^{A}(t)\right)=0
\end{equation}
We will show that if $F^{A}$ is not the degenerate distribution at zero, we
have
\begin{equation}\label{uniq}
\int \frac{\bar{b}_{2}t}{(z+\bar{b}_{2}t\beta(z))(\bar{b}_{2}t\int
\frac{adF^{A}(a)}{z+\bar{b}_{2}a\beta(z)}-z)}dF^{A}(t)\ne 0,
\end{equation}
so that there is a solution, and that there is a unique $\beta(z)$ satisfying
(\ref{equation_system_combined}). Let $\beta(z)=\beta_{1}+i\beta_{2}$. In view
of establishing the continuous dependence of $\beta(z)$, and hence $s(z)$, on
$F^{A}$, suppose that there is another distribution $F^{A^{0}}$, also
non-degenerate at zero. And let, $\beta^{0}(z)=\beta^{0}_{1}+i\beta^{0}_{2}\in
\mC^{+}$ satisfies
$$
\beta^{0}(z)=-\int\frac{adF^{A^{0}}(a)}{z+\bar{b}_{2}a\beta^{0}(z)}.
$$
Then we have
\begin{eqnarray}\label{eq:uniqueness_on_beta_n}
\beta(z)-\beta^{0}(z)&=&-\int\frac{adF^{A}(a)}{z+\bar{b}_{2}a\beta(z)}+\int\frac{adF^{A^{0}}(a)}{z+\bar{b}_{2}a\beta^{0}(z)}\nonumber\\
&=&\left(\beta(z)-\beta^{0}(z)\right)\gamma(z)+\int
\frac{a}{z-\bar{b}_{2}a\beta^{0}(z)}d(F^{A}(a)-F^{A^{0}}(a))
\end{eqnarray}
where
$$
\gamma(z):=\int
\frac{\bar{b}_{2}a^{2}dF^{A}(a)}{(z+\bar{b}_{2}a\beta(z))(z+\bar{b}_{2}a\beta^{0}(z))}.
$$
Let
$$
\omega(z)=\int
\frac{\bar{b}_{2}a^{2}}{\left|z+\bar{b}_{2}a\beta(z)\right|^{2}}dF^{A}(a),
\quad \omega^{0}(z)=\int
\frac{\bar{b}_{2}a^{2}}{\left|z+\bar{b}_{2}a\beta^{0}(z)\right|^{2}}dF^{A}(a) ,
$$
$$
\tau(z)=\int \frac{a}{\left|z+\bar{b}_{2}a\beta(z)\right|^{2}}dF^{A}(a) ,\quad
\tau^{0}(z) = \int
\frac{a}{\left|z+\bar{b}_{2}a\beta^{0}(z)\right|^{2}}dF^{A}(a).
$$
We have
\begin{equation}\label{imaginary part of beta}
\beta_{2}=-\Im
\int\frac{a(\overline{z}+\overline{a\beta(z)})}{\left|z+\bar{b}_{2}a\beta(z)\right|^{2}}dF^{A}(a)=\int
\frac{av+\bar{b}_{2}a^{2}\beta_{2}}{\left|z+\bar{b}_{2}a\beta(z)\right|^{2}}dF^{A}(a):=\beta_{2}\omega(z)+v\tau(z)
\end{equation}
and
\begin{equation}\label{eg:expresssion_tau_0}
\beta_{2}^{0}=\int
\frac{av+\bar{b}_{2}a^{2}\beta^{0}_{2}}{\left|z+\bar{b}_{2}a\beta^{0}(z)\right|^{2}}dF^{A^{0}}(a):=\beta^{0}_{2}\omega^{0}(z)+v\tau^{0}(z),
\end{equation}
By Cauchy-Schwarz inequality,
\begin{eqnarray*}
|\gamma(z)| &=&\left|\int \frac{\bar{b}_{2}a^{2}dF^{A}(a)}{(z+a\beta(z))(z+a\beta^{0}(z))}\right|\\
&\leq& \int \left|\frac{\bar{b}_{2}a^{2}}{(z+a\beta(z))^2}\right|^{1/2}\left|\frac{\bar{b}_{2}a^{2}}{(z+a\beta^{0}(z))^2}\right|^{1/2}dF^{A}(a)\\
&\leq& \left[\int
\frac{\bar{b}_{2}a^{2}dF^{A}(a)}{|z+\bar{b}_{2}a\beta(z)|^{2}}\right]^{1/2}
\left[ \int \frac{\bar{b}_{2}a^{2}dF^{A}(a)}{|z+\bar{b}_{2}a\beta^{0}(z)|^{2}}\right]^{1/2}\\
&=&\left(\frac{\beta_{2}\omega(z)}{\beta_{2}\omega(z)+v\tau(z)}\right)^{1/2}
\left(\frac{\beta^{0}_{2}\omega^{0}(z)}{\beta^{0}_{2}\omega_{0}(z)+v\tau^{0}(z)}\right)^{1/2}\\
&< & 1.
\end{eqnarray*}
The last inequality holds is due to the fact that  for $v > 0$,
$$
\beta_{2}=\beta_{2}\omega(z)+v\tau(z)>\beta_{2}\omega(z)
$$
which implies
$$
\omega(z)=\int
\frac{\bar{b}_{2}a^{2}}{\left|z+\bar{b}_{2}a\beta(z)\right|^{2}}dF^{A}(a)<1,
$$
and it also holds that
\begin{equation}\label{eq:omega_0}
\omega^{0}(z)=\int
\frac{\bar{b}_{2}a^{2}}{\left|z+\bar{b}_{2}a\beta^{0}(z)\right|^{2}}dF^{A}(a)<
1.
\end{equation}
From (\ref{eq:uniqueness_on_beta_n}) we have
\begin{eqnarray}\label{eq:difference_beta_beta_zero}
\left|\beta(z)-\beta^{0}(z)\right|&=&\frac{1}{1-\gamma(z)}\left|\int
\frac{a}{z+\bar{b}_{2}a\beta^{0}(z)}d(F^{A}(a)-F^{A^{0}}(a))\right|
\end{eqnarray}
from which the uniqueness of the solution $\beta(z)$ follows.
If $F^{A^{0}}$ is not degenerate at zero, then the integrand
(\ref{eq:difference_beta_beta_zero}) is a bounded (and continuous) function of
$a$, which establishes the continuous dependence of $\beta(z)$ on $F^{A}$
through the characterization of distributional convergence. To see this, note
that $|\beta^0(z)| > 0$ implies that for all $a > M$,
\begin{equation*}
\left|\frac{a}{z + \bar{b}_2 a \beta^{0}(z)}\right| = \frac{1}{| z/a +
\bar{b}_2 \beta^0(z)|} \leq \frac{1}{\bar{b}_2 |\beta^0(z)| - |z|/M}
\end{equation*}
where $M$ is large enough that the denominator in the last expression is
positive. On the other hand, for $0 \leq a \leq M$,
\begin{equation*}
\left|\frac{a}{z + \bar{b}_2 a \beta^{0}(z)}\right| \leq \frac{M}{v}
\end{equation*}
since $\Im(\beta^0(z)) = \beta_2^0 > 0$ by (\ref{eg:expresssion_tau_0}) and
(\ref{eq:omega_0}).

%The boundedness of the integral is given in Appendix \ref{append:bound_on_integral}.

Now, to prove (\ref{uniq}), we write
\begin{equation*}
\left(\int \frac{\bar{b}_{2}t}{(z+\bar{b}_{2}t\beta(z))(\bar{b}_{2}t\int
\frac{adF^{A}(a)}{z+\bar{b}_{2}a\beta(z)}-z)}dF^{A}(t)\right)=\int
\frac{\bar{b}_{2}t g(z)}{|z+\bar{b}_{2}t\beta(z)|^{2}|\bar{b}_{2}t\int
\frac{adF^{A}(a)}{z+\bar{b}_{2}a\beta(z)}-z|^{2}}dF^{A}(t)
\end{equation*}
where {\small
\begin{eqnarray*}
&&g(z)\\
&:=& (\overline{z}+\bar{b}_{2}\overline{t\beta(z)})(\bar{b}_{2}\overline{t\int \frac{adF^{A}(a)}{z+\bar{b}_{2}a\beta(z)}}-\overline{z})\\
&=&[u+\bar{b}_{2}t\beta_{1}-i(v+\bar{b}_{2}t\beta_{2})]
\left[\bar{b}_{2}t\int\frac{a(u+\bar{b}_{2}a\beta_{1})dF^{A}(a)}{|z+\bar{b}_{2}a\beta(z)|^{2}} - u + i\left(\bar{b}_{2}t\int\frac{a(v+\bar{b}_{2}a\beta_{2})dF^{A}(a)}{|z+\bar{b}_{2}a\beta(z)|^{2}}+v\right)\right]\\
&=&\left[
(u+\bar{b}_{2}t\beta_{1})\left(\bar{b}_{2}t\int\frac{a(u+\bar{b}_{2}a\beta_{1})dF^{A}(a)}{|z+\bar{b}_{2}a\beta(z)|^{2}}-u\right)
+(v+\bar{b}_{2}t\beta_{2})\left(\bar{b}_{2}t\int\frac{a(v+\bar{b}_{2}a\beta_{2})dF^{A}(a)}{|z+\bar{b}_{2}a\beta(z)|^{2}}+v\right)\right]\\
&& -
i\left[(u+\bar{b}_{2}t\beta_{1})\left(\bar{b}_{2}t\int\frac{a(v+\bar{b}_{2}a\beta_{2})dF^{A}(a)}{|z+\bar{b}_{2}a\beta(z)|^{2}}+v\right)
-(v+\bar{b}_{2}t\beta_{2})\left(\bar{b}_{2}t\int\frac{a(u+\bar{b}_{2}a\beta_{1})dF^{A}(a)}{|z+\bar{b}_{2}a\beta(z)|^{2}}-u\right)\right]
\end{eqnarray*}
}From the fact that $\Im(\tr((C_n-zI)^{-1}A_{p}))\geq 0$, we have
$\beta_{2}\geq 0$. Then, if $(u+\bar{b}_{2}t\beta_{1})$ and
$$
\left(\bar{b}_{2}t\int\frac{a(u+\bar{b}_{2}a\beta_{1})dF^{A}(a)}{|z+\bar{b}_{2}a\beta(z)|^{2}}-u\right)
$$
are either both nonpositive or both nonnegative, then $\Re g(z)$ is positive.
Else, if these two terms have opposite signs, the imaginary part of $g(z)$ is
non-zero. Therefore (\ref{uniq}) is established.

\vskip.15in
Finally, we give the proof of Lemma \ref{lemma:density_support}
based on the facts given in this subsection.
\begin{proof}\label{support_proof}
By (\ref{imaginary part of beta}), taking $v=\Im(z) = 0$ (since $z = x$), we
have $\Im\beta(x) = \omega(x)\Im\beta(x)$, which shows that, $\Im \beta(x) > 0$
if and only if $\omega(x) = 1$.  This, together with (\ref{eq:density}), shows
that $f(x) \neq 0$ if and only if $x \Re\beta(x) < 0$ and $\omega(x) = 1$,
i.e., if $\Im \beta(x) > 0$.
\end{proof}

%%%---------------------------------------------------------------------
\subsection{Extension to non-Gaussian settings}\label{subsec:nongaussian}

In this section, we will prove the Theorem \ref{thm:main_LSD} for non-Gaussian
observations through the Lindeberg's replacement strategy (see Chatterjee
\cite{Chatterjee}). As a first step, we perform a truncation, centering and
rescaling of the entries of $X_{n}$. Let
$\breve{X}_{ij}=X_{ij}I_{\{|X_{ij}|\leq n^{1/4}\epsilon_{p}\}}$ and
$\hat{X}_{ij}=(\breve{X}_{ij}-\mE(\breve{X}_{ij}))/\sqrt{\mbox{Var}(\breve{X}_{ij})}$,
where $\epsilon_{p}$ is chosen to satisfy $\epsilon_{p}\to 0,
\epsilon_{p}p^{1/4}\to \infty$ and $\mathbb{P}(|X_{11}| \geq \epsilon_p
p^{1/4}) \leq \epsilon_p/n$. Define
$$
\hat{C_{n}}:=
\sqrt{\frac{n}{p}}\left(\frac{1}{n}A_{p}^{1/2}\hat{X}_{n}B_{n}\hat{X}_{n}^{*}A_{p}^{1/2}-\frac{1}{n}\tr(B_{n})A_{p}\right),
$$
then according to Bai and Yin \cite{BY}, by applying a rank inequality and the
Bernstein's inequality, we get
$$
\sup_{x}\left|F^{C_{n}}(x)-F^{\hat{C}_{n}}(x)\right|\stackrel{a.s.}{\longrightarrow}
0.
$$
For notational simplicity, the truncated, centered and rescaled variables are
henceforth still denoted by $X_{ij}$ and we henceforth assume that $X_{ij}$'s
are i.i.d. with $|X_{ij}| \leq n^{1/4}\epsilon_p$, $\mathbb{E}(X_{ij}) = 0$,
$\mathbb{E}|X_{ij}|^2 = 1$ and $\mathbb{E}|X_{ij}|^4 \leq C$ for some $C <
\infty$.

Define
\begin{equation*}
\widetilde{C}_{n}=\sqrt{\frac{n}{p}}\left(\frac{1}{n}A_{p}^{1/2}W_{n}B_{n}W_{n}^{*}A_{p}^{1/2}-\frac{\tr(B_{n})}{n}A_{p}\right),
\end{equation*}
where the entries of $W=(W_{ij})_{p\times n}$ are i.i.d Gaussian random
variables with $\mE (W_{11})=0$ and $\mE (|W_{11}|^{2})=1$. Suppose $W_{ij}$
are independent of $X_{ij}$ defined in Theorem \ref{thm:main_LSD}. The key step
is to estimate the difference of
\begin{equation}
\mE\left(\frac{1}{p}\tr(C_{n}-zI)^{-1}\right)-\mE\left(\frac{1}{p}\tr(\widetilde{C}_{n}-zI)^{-1}\right)
\end{equation}
and to show that it converges to 0 as $n, p\to \infty$. In fact, the
Gaussianity of $W_{ij}$ is not used in the proof, only moment conditions on
$W_{ij}$ are required. To apply the Lindeberg principle,  we denote
$$
X_{11}, \cdots, X_{1n}, X_{2n},\cdots, X_{pn}\quad by \quad Y_{1}, \cdots, Y_{pn},
$$
and
$$
W_{11}, \cdots, W_{1n}, W_{2n},\cdots, W_{pn}\quad by \quad \y_{1}, \cdots,   \y_{pn}.
$$
Let $m=m(n)=pn$, for each $1\leq i \leq m,$ define
$$
Z_{i}=(Y_{1}, \cdots,Y_{i-1}, Y_{i}, \y_{i+1}, \cdots, \y_{m})
$$
and
$$
Z_{i}^{0}=(Y_{1}, \cdots, Y_{i-1}, 0, \y_{i+1}, \cdots, \y_{m}).
$$
Suppose that $f: \mR^{m}\to \mC^{+}$ is defined as $f(\mathbf{y}) = p^{-1}
\tr(C(\mathbf{y})-zI)^{-1}$ where $C(\mathbf{y}) = n^{-1}
A_{p}^{1/2}\mathbf{Y}B_{n} \mathbf{Y}^*A_{p}^{1/2}$, where the $p \times n$
matrix $\mathbf{Y}$ is obtained by converting the $m \times 1$ vector
$\mathbf{y}$. Then $f(Z_{m})=p^{-1}\tr(C_{n}-zI)^{-1}$ and $f(Z_{0})
=p^{-1}\tr(\widetilde{C}_{n}-zI)^{-1}$. So we rewrite the difference as
\begin{equation}\label{difference on resolvent}
\mE\left[\frac{1}{p}\tr(C_{n}-zI)^{-1}\right]-\mE\left[\frac{1}{p}\tr(\widetilde{C}_{n}-zI)^{-1}\right]
= \sum_{i=1}^{m}\mE\left[f(Z_{i})-f(Z_{i-1})\right].
\end{equation}
Since $f$ is thrice continuously differentiable, a third order Taylor expansion
yields:
\begin{equation}\label{Taylor1}
f(Z_{i})=f(Z_{i}^{0})+Y_{i}\p_{i}f(Z_{i}^{0})+\frac{1}{2}Y_{i}^{2}\p_{i}^{2}f(Z_{i}^{0})
+ \frac{1}{2}Y_{i}^{3}\int_{0}^{1}(1-t)^{2}\p_{i}^{3}f(Z_{i}^{(1)}(t))dt
\end{equation}
and
\begin{equation}\label{Taylor2}
f(Z_{i-1})=f(Z_{i}^{0})+\y_{i}\p_{i}f(Z_{i}^{0})+\frac{1}{2}\y_{i}^{2}\p_{i}^{2}f(Z_{i}^{0})+
\frac{1}{2}\y_{i}^{3}\int_{0}^{1}(1-t)^{2}\p_{i}^{3}f(Z_{i-1}^{(2)}(t))dt
\end{equation}
where $\p_{i}^{r}$ denotes the $r$-fold partial derivative ($r =1,2,3$) with
respect to the $i$-th coordinate and
$$
Z_{i}^{(1)}(t)=(Y_{1}, \cdots,Y_{i-1}, tY_{i}, \y_{i+1}, \cdots, \y_{m}),
$$
and
$$
Z_{i-1}^{(2)}(t)=(Y_{1}, \cdots, Y_{i-1}, t\y_{i}, \y_{i+1}, \cdots, \y_{m}).
$$
Since both $Y_{i}$ and $\y_{i}$ have zero mean and unit variance and are
independent of $Z_{i}^{0}$, the expectation of first order and second order
terms in (\ref{Taylor1}) and (\ref{Taylor2}) are zero. Thus (\ref{difference on
resolvent}) becomes
\begin{equation*}
\sum_{i=1}^{m}\mE\left[f(Z_{i})-f(Z_{i-1})\right] =
\frac{1}{2}\sum_{i=1}^{m}\mE\left[Y_{i}^{3}\int_{0}^{1}(1-t)^{2}\p_{i}^{3}f\left(Z_{i}^{(1)}(t)\right)dt
-
\y_{i}^{3}\int_{0}^{1}(1-t)^{2}\p_{i}^{3}f\left(Z_{i-1}^{(2)}(t)\right)dt\right].
\end{equation*}

In the following, to avoid complicated notations, unless otherwise specified,
we will use the notation $X_{ij}$ to mean either $X_{ij}$ or $W_{ij}$ since
their role will be only in terms of providing bounds for the expected values of
the remainder terms in the expansion above. $X_{n}$ will be used to denote a
matrix containing the corresponding mixed terms. The properties of these random
variables that we will use are that they are independent, have zero mean and
unit variance, and are sub-Gaussian (bounded in case of $X_{ij}$'s).
Accordingly, let $G_{n}=G_{n}(z):=(C_{n}-zI)^{-1}$. To derive a bound for the
terms involving $\p_{i}^{3}f\left(Z_{i}^{(k)}(t)\right)$, $k=1,2$, we need a
bound on $p^{-1}\tr\left[\frac{\p^{3}G_{n}}{\p X_{ij}^{3}}\right]$. Since
$\frac{\p G_{n}}{\p X_{ij}}=-\frac{\p C_n}{\p X_{ij}}G_{n}^{2}$, we get
\begin{equation}\label{third_derivative}
\frac{1}{p}\tr\left[\frac{\p^3 G_{n}}{\p X^{3}_{ij}}\right] =
\frac{6}{p}\tr\left[\frac{\p C_{n}}{\p X_{ij}}G_{n}\frac{\p^2 C_{n}}{\p
X_{ij}^{2}}G_{n}^{2}\right] - \frac{6}{p}\tr\left[\frac{\p C_{n}}{\p
X_{ij}}G_{n}\frac{\p C_{n}}{\p X_{ij}}G_{n}\frac{\p C_{n}}{\p
X_{ij}}G_{n}^2\right].
\end{equation}
where
\begin{equation*}
\frac{\p C_{n}}{\p X_{ij}}=
\frac{1}{\sqrt{np}}\left(A_{p}^{1/2}X_{n}B_{n}\widetilde{e}_{j}e_{i}^{T}A_{p}^{1/2}
+ A_{p}^{1/2}e_{i}\widetilde{e}_{j}^{T}B_{n}X_{n}^{*}A_{p}^{1/2}\right)
\end{equation*}
and
\begin{equation*}
\frac{\p^{2}C_{n}}{\p
X_{ij}^{2}}=\frac{2}{\sqrt{np}}b_{jj}A_{p}^{1/2}e_{i}e_{i}^{T}A_{p}^{1/2},
\quad \frac{\p^{3}C_{n}}{\p X_{ij}^{3}}=0,
\end{equation*}
in which $e_{i}$ is a $p\times 1$ unit vector and $\widetilde{e}_{i}$ is a
$n\times 1$ unit vector.

Let $ r_{j}:= A_{p}^{1/2}X_{n}B_{n}\widetilde{e}_{j}$ and
$\xi_{i}:=A_{p}^{1/2}e_{i}$. The first term in (\ref{third_derivative}) becomes
\begin{eqnarray}
\frac{6}{p}\tr\left[\frac{\p C_{n}}{\p X_{ij}}G_{n}\frac{\p^2 C_{n}}{\p
X_{ij}^{2}}G_{n}^{2}\right]
&=&\frac{12b_{jj}}{np^2}\left[\xi_{i}^{*}G_{n}^{2}\xi_{i}\xi_{i}^{*}G_{n}r_{j}\right]
+ \frac{12b_{jj}}{np^2}\left[r_{j}^{*}G_{n}\xi_{i}\xi_{i}^{*}G_{n}^{2}\xi_{i}\right]\nonumber\\
&:=& \eta_{1}(n)+\eta_{2}(n)
\end{eqnarray}
and the second term in (\ref{third_derivative}) becomes
\begin{eqnarray*}
\frac{1}{p}\tr\left[\frac{\p C_{n}}{\p X_{ij}}G_{n}\frac{\p C_{n}}{\p
X_{ij}}G_{n}\frac{\p C_{n}}{\p X_{ij}}G^{2}_{n}\right]
&=&\frac{1}{n^{3/2}p^{5/2}} \tr\left[(r_{j}\xi_{i}^{*} +
\xi_{i}r_{j}^{*})G_{n}(r_{j}\xi_{i}^{*}
+ \xi_{i}r_{j}^{*})G_{n}(r_{j}\xi_{i}^{*} + \xi_{i}r_{j}^{*})G_{n}^{2}\right]\\
&:=& 2\eta_{3}(n)+2\eta_{4}(n)+2\eta_{5}(n)+2\eta_{6}(n)
\end{eqnarray*}
where
\begin{eqnarray}
\eta_{3}(n)&=& \frac{1}{n^{3/2}p^{5/2}}\left[(r_{j}^{*}G_{n}\xi_{i})^2r_{j}^{*}G_{n}^{2}\xi_{i}\right]\\
\eta_{4}(n) &=& \frac{1}{n^{3/2}p^{5/2}}\left[r_{j}^{*}G_{n}\xi_{i}r_{j}^{*}G_{n}r_{j}\xi_{i}^{*}G_{n}^{2}\xi_{i}\right]\\
\eta_{5}(n) &=&  \frac{1}{n^{3/2}p^{5/2}}\left[r_{j}^{*}G_{n}r_{j}\xi_{i}^{*}G_{n}\xi_{i}r_{j}^{*}G^{2}_{n}\xi_{i}\right]\\
\eta_{6}(n)&=&
\frac{1}{n^{3/2}p^{5/2}}\left[\xi_{i}^{*}G_{n}\xi_{i}r_{j}^{*}G_{n}\xi_{i}r_{j}^{*}G^{2}_{n}r_{j}\right].
\end{eqnarray}
To complete the proof, we need the following lemma whose proof is given in the
Appendix.
\begin{lemma}\label{lem:estimation_r}
For any positive number $k\geq 1$,
\begin{equation}\label{r}
\mE\|r_{j}\|^{2k}\leq C_{k}p^{k}
\end{equation}
for some positive constant $C_{k}$.
\end{lemma}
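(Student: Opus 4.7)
The plan is to exploit the factorisation $r_j = A_p^{1/2} X_n b$ with $b := B_n \widetilde{e}_j$, so that the random vector $X_n b \in \mC^p$ has \emph{independent} entries (one per row of $X_n$). A moment inequality for sums of independent non-negative random variables then reduces the task of bounding $\mE\|r_j\|^{2k}$ to controlling $\mE|\eta_1|^{2k}$ for a single scalar weighted sum $\eta_1 := \sum_{l=1}^n X_{1l} b_l$.

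Concretely, since $\|B_n\|\leq b_0$ and $\widetilde{e}_j$ is a unit vector, $\|b\|\leq b_0$; moreover, after the truncation performed in Section \ref{subsec:truncation_F_A}, $\|A_p\|\leq a_0$. Using $A_p \preceq a_0 I_p$,
\begin{equation*}
\|r_j\|^2 \;\leq\; a_0 \|X_n b\|^2 \;=\; a_0 \sum_{i=1}^p |\eta_i|^2,
\end{equation*}
where the $\eta_i := \sum_l X_{il} b_l$ are independent across $i$ with $\mE\eta_i = 0$ and $\mE|\eta_i|^2 = \|b\|^2$. Rosenthal's inequality applied to the non-negative independent summands $|\eta_i|^2$ yields
\begin{equation*}
\mE\|X_n b\|^{2k} \;\leq\; C_k \bigl[(p\|b\|^2)^k + p\,\mE|\eta_1|^{2k}\bigr],
\end{equation*}
which is $O(p^k)$ provided $\mE|\eta_1|^{2k}\leq C_k'$ for a constant independent of $n$. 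Since $\eta_1$ is itself a weighted sum of independent centred variables $X_{1l}$, one more application of Rosenthal, together with $\|b\|_\infty \leq \|b\|\leq b_0$, bounds $\mE|\eta_1|^{2k}$ by $C_k''\bigl[1 + \mE|X_{11}|^{2k}\bigr]$.

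The main technical point is therefore to control $\mE|X_{11}|^{2k}$ uniformly in $n$. In the sub-Gaussian regime, which covers the Gaussian $W_{ij}$ used in the Lindeberg interpolation of Section \ref{subsec:nongaussian}, this moment is a finite constant depending only on $k$, and the chain of estimates above immediately yields $\mE\|r_j\|^{2k} \leq \widetilde{C}_k p^k$. In the truncated non-Gaussian regime, we only have $|X_{11}|\leq n^{1/4}\epsilon_p$ together with $\mE|X_{11}|^4\leq C$, hence $\mE|X_{11}|^{2k}\leq (n^{1/4}\epsilon_p)^{2k-4}\,C$ for $k\geq 2$; for the moderate values of $k$ actually required by the third-order Taylor expansion of Section \ref{subsec:nongaussian}, the restrictions $\epsilon_p \to 0$, $p^{1/4}\epsilon_p \to\infty$ and $p/n\to 0$ are strong enough to keep the contribution $p\,\mE|\eta_1|^{2k}$ at order $O(p^k)$, giving the claim. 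The delicate bookkeeping of this last step is the main obstacle; a cleaner alternative (valid for all $k$ simultaneously in the sub-Gaussian case) is to invoke a Hanson--Wright concentration bound for the quadratic form $(X_n b)^* A_p (X_n b)$, exploiting $\|A_p\|_F \leq a_0\sqrt{p}$.
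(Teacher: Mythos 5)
Your argument is essentially the same reduction as the paper's --- express $\|r_j\|^2$ as a sum of $p$ independent non-negative scalars and control the $k$-th moment of that sum --- but it is more careful in one respect and uses a different moment tool. The paper writes $\|r_j\|^2 = \sum_i N_i^2$ with $N_i := (A_p^{1/2}X_n B_{\cdot j})_i$ and asserts that the $N_i$ are independent; this is not true unless $A_p$ is diagonal, since the rows of $A_p^{1/2}X_n$ linearly mix the rows of $X_n$, and although one can rotate $A_p$ away when $X_n$ is Gaussian, the lemma is invoked in Section \ref{subsec:nongaussian} on a Lindeberg-interpolated matrix for which no such rotation is available. Your inequality $\|r_j\|^2 \leq a_0 \|X_n b\|^2$ strips $A_p$ off \emph{before} the decomposition, so your coordinates $\eta_i := (X_n b)_i$ genuinely are independent across $i$; this quietly repairs a gap in the paper's own proof. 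You then apply Rosenthal's inequality twice, where the paper controls the tail with Bernstein's inequality and integrates; the two routes are equivalent in spirit, but yours has the advantage of making explicit exactly how $\mE|X_{11}|^{2k}$ enters.

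The one step both proofs treat loosely --- and you flag it honestly --- is the truncated non-Gaussian regime, where $|X_{11}|\leq n^{1/4}\epsilon_p$ and only the fourth moment is controlled. Then $\mE|X_{11}|^{2k}$ grows like $(n^{1/4}\epsilon_p)^{2k-4}$, so for $k=3$ your chain gives $\mE\|r_j\|^6 = O(p^3 + p\,n^{1/2}\epsilon_p^2)$, and the second term exceeds $p^3$ whenever $n^{1/2}\epsilon_p^2 \gg p^2$, which is not ruled out by $\epsilon_p\to 0$, $p^{1/4}\epsilon_p\to\infty$, $p/n\to 0$. The paper's parenthetical assertion that the sub-exponential norm of $N_i^2 - \mE N_i^2$ is ``uniformly bounded (can be verified)'' suffers from precisely the same defect, so the lemma as stated with an absolute $C_k$ is not literally justified for $k\geq 3$ in the truncated case by either argument. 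Fortunately the downstream application in Section \ref{subsec:nongaussian} survives: carrying the extra $p\,n^{1/2}\epsilon_p^2$ term through the bound on $\mE[|X_{ij}^3\eta_3(n)|]$ produces an additional contribution of order $\epsilon_p^2/p$ after summing over the $np$ Lindeberg steps, which still vanishes. The Hanson--Wright alternative you mention is indeed cleaner for the Gaussian $W_{ij}$ part, but it too requires a uniform sub-Gaussian scale and therefore does not by itself escape the truncation subtlety.
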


To estimate (\ref{difference on resolvent}), we need to find appropriate bounds
for $\mE \left|Y_{i}^{3}\p_{i}^{3}f\left(Z_{i}^{(1)}(t)\right)\right|$ and $\mE
\left|\tilde Y_{i}^{3}\p_{i}^{3}f\left(Z_{i}^{(2)}(t)\right)\right|$. Note that
for $k = 1, 2$,
\begin{equation}\label{eta_1}
|\eta_{k}(n)|\leq \frac{12b_{0}}{np^2}\left(\frac{a_{0}}{v}\right)^3\|r_{j}\|.
\end{equation}
Applying H\"{o}lder's inequality and (\ref{eta_1}) Lemma \ref{lem:estimation_r}
gives, for $k=1,2$,
\begin{equation*}
\mE \left[\left|X_{ij}^3\eta_k(n)\right|\right]\leq \frac{M}{np^2}\left[\mE
|X_{ij}|^{4}\right]^{3/4} \left[\mE\|r_{j}\|^{4}\right]^{1/4}\leq
\frac{M}{np^{3/2}}.
\end{equation*}
For $k= 3,4,5,6$ we have
\begin{equation}\label{eta_3}
\left|\eta_{k}(n)\right|\leq
\frac{1}{n^{3/2}p^{5/2}}\left(\frac{a_{0}}{v}\right)^{3}\|r_{j}\|^{3}.
\end{equation}
Since $|X_{ij}|\leq n^{1/4}\epsilon_{p}$ and (\ref{eta_3}), by using
Cauchy-Schwarz inequality and Lemma \ref{lem:estimation_r} we have
\begin{eqnarray*}
\mE\left[|X_{ij}^{3}\eta_{3}(n)|\right] &\leq& \frac{M}{n^{3/2}p^{5/2}}\mE\left[|X_{ij}|^{3}\|r_{j}\|^{3}\right]\\
&\leq&\frac{M}{n^{3/2}p^{5/2}}\left[\left(\mE|X_{ij}|^{6}\right)^{1/2}\left(\mE \|r_{j}\|^{6}\right)^{1/2}\right]\\
&\leq&\frac{M}{n^{3/2}p}\left[\mE|X_{ij}|^{4}\right]^{1/2}n^{1/4}\epsilon_{p}\\
&\leq& \frac{M\epsilon_{p}}{n^{5/4}p}.
\end{eqnarray*}
Therefore, by applying (\ref{eta_3}) and Lemma \ref{lem:estimation_r}, it also
holds for $k = 4,5,6$ that  $\mE
\left[|X_{ij}^{3}\eta_{k}(n)|\right]=O(\epsilon_{p}n^{-5/4}p^{-1})$. If instead
of $X_{ij}$ the terms involved were $W_{ij}$, we could simply use the fact that
all moments of $W_{ij}$ are finite to reach the same conclusion. Thus,
combining the bounds, (\ref{difference on resolvent}) can be bounded by
\begin{equation*}
\sum_{i=1}^{m}\int_{0}^{1}(1-t)^{2}\left[\mE\left|Y_{i}^{3}\p_{i}^{3}f\left(Z_{i}^{(1)}(t)\right)\right|
+
\mE\left|\y_{i}^{3}\p_{i}^{3}f\left(Z_{i-1}^{(2)}(t)\right)\right|\right]dt\leq
M \max\{n^{-1/4}\epsilon_{p},p^{-1/2} \}\to 0.
\end{equation*}
This completes the proof of Theorem \ref{thm:main_LSD}.

%%%--------------------------- proof of the theorem on fluctuations of eigenvalues --------------

\section{Proof of Theorem \ref{thm:eigenvalue_fluctuations}}\label{sec:proof_eigenvalue_fluctuations}

Without loss of generality, we can take
\begin{equation}\label{eq:definition_A_p}
A_{p}^{1/2} =
\begin{pmatrix}
\sqrt{\alpha_1} I_{p_1} & \mathbf{0} & \cdots & \mathbf{0} \\
\mathbf{0} & \sqrt{\alpha_2} I_{p_2} & \cdots & \mathbf{0} \\
\cdot & \cdot & \cdots & \cdot \\
\mathbf{0} & \mathbf{0} & \cdots & \sqrt{\alpha_m} I_{p_m} \\
\end{pmatrix}
V^* = \begin{pmatrix}
\sqrt{\alpha_1} V_1^* \\
\sqrt{\alpha_2} V_2^* \\
\cdots \\
\sqrt{\alpha_m} V_m^* \\
\end{pmatrix},
\end{equation}
for $\alpha_1 \geq \alpha_2 \cdots \geq \alpha_m$, where $V = [V_1 : \cdots :
V_m]$ is a $p\times p$ unitary matrix where $V_j$ is a $p \times p_j$ matrix,
so that $V_j^* V_j = I_{p_j}$ for $j=1,\ldots,m$ and $V_j^* V_k =
\mathbf{0}_{p_j \times p_k}$ for $1\leq j \neq k \leq m$. Thus, the data matrix
$Y_n$ can be expressed as
\begin{equation}\label{eq:Y_repr_A_discrete}
Y_n = A_{p}^{1/2} X_n B_{n}^{1/2} =
\begin{pmatrix}
\sqrt{\alpha_1} V_1^* X_n B_{n}^{1/2} \\
\cdots \\
\sqrt{\alpha_m} V_m^* X_n B_{n}^{1/2} \\
\end{pmatrix}
= \begin{pmatrix}
Y_1 \\
\cdot \\
Y_m \\
\end{pmatrix}.
\end{equation}
Assume that $B_{n}$ is a $n\times n$ matrix such that $\bar{b}_n :=
n^{-1}\tr(B_n) \to 1$. Then the sample covariance matrix $S_n =
n^{-1}Y_nY_n^{*}$ with mean $\bar{b}_n A_p$ can be expressed as
\begin{equation}
S_n =
\begin{pmatrix}
S_{11} & \cdots&S_{1m}\\
\vdots&\ddots& \vdots\\
S_{m1}& \cdots &S_{mm}\\
\end{pmatrix}
\end{equation}
As a first step, we define the following renormalized matrix
\begin{equation}\label{def:D_n}
D_n := \sqrt{\frac{n}{p}}(\mathbf{D}(S_n)- \bar{b}_n A_p)\quad \text{where}
\quad \mathbf{D}(S_n)=
\begin{pmatrix}
S_{11} & \cdots&\mathbf{0}\\
\vdots&\ddots& \vdots\\
\mathbf{0}& \cdots &S_{mm}\\
\end{pmatrix};
\end{equation}
and
\begin{equation}\label{def:E_n}
E_n := \sqrt{\frac{n}{p}}(\bs{\lambda}(S_n)- \bar{b}_n A_p);\quad
\bs{\lambda}(S_n)= diag(\lambda_1(S_n),\cdots, \lambda_p(S_n)),
\end{equation}
where $\lambda_j(C)$ denotes the $j$-th largest eigenvalue of the Hermitian
matrix $C$. The ESD of $D_n$ converges weakly almost surely to a nonrandom
distribution $F$, where $F(x) = \sum_{j=1}^m c_j F_{sc}(x; \sqrt{\bar{b}_2
c_j} \alpha_j)$, where $c_{j}=p_{j}/p$. This is established by
observing that the Stieltjes transform of $D_n$ can be expressed as
\begin{equation*}
\frac{1}{p}\tr((\sqrt{\frac{n}{p}}(\mathbf{D}(S_n) - \bar{b}_n A_p) - z
I_p)^{-1}) = \sum_{j=1}^m c_j \frac{1}{p_j} \tr((\sqrt{c_j}\alpha_j
\sqrt{\frac{n}{p_j}}(S_{jj}/\alpha_j  - \bar{b}_n I_{p_j}) - z I_p)^{-1}),
\end{equation*}
and then applying the result in Remark \ref{rem:non_Hermitian_root} to the
terms on the RHS. Thus, in order to complete the proof, we only need to show
that the ESD of $D_n$ and $E_n$ are almost surely equivalent. To this end, we
need the following proposition.
\begin{pro}\label{proposition_levy_F_D_and_F_E}
Suppose that the data matrix $Y_n=A_{p}^{1/2}X_nB_{n}^{1/2}$, where $A_{p}$ is
defined in (\ref{eq:definition_A_p}), $B_n$ is a $n\times n$ matrix satisfying
$\lim_{n\to \infty}n^{-1}\tr(B_n)=1$. Let $D_n=\sqrt{n/p}(\bs{D}(S_n)-
\bar{b}_n A_p)$ and $E_n=\sqrt{n/p}(\bs{\lambda}(S)- \bar{b}_n A_p)$. Then,
\begin{equation}\label{eq:levy_D_n_E_n}
L(F^{D_n}, F^{E_n})\stackrel{a.s.}{\longrightarrow} 0,
\end{equation}
where $L(F^{D_n},F^{E_n})$ denotes the L\'{e}vy distance between the ESDs of
$D_n$ and $E_n$.
\end{pro}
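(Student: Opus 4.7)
The plan is to show a uniform pairwise bound $|\lambda_\ell(S_n) - \lambda_\ell(\mathbf{D}(S_n))| = O(p/n)$ almost surely, so that after multiplication by $\sqrt{n/p}$ the discrepancy is $O(\sqrt{p/n}) \to 0$ uniformly in $\ell$, which forces $L(F^{D_n}, F^{E_n}) \to 0$ a.s. The key tool is Wielandt's inequality for block Hermitian matrices with a spectral gap, which supplies a second-order perturbation bound $O(\|R_n\|^2/g)$ rather than the (insufficient) first-order Weyl bound $O(\|R_n\|)$, where $R_n := S_n - \mathbf{D}(S_n)$ collects the off-diagonal blocks.

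First I would control the off-diagonal blocks $S_{jk} = n^{-1}\sqrt{\alpha_j\alpha_k}\,V_j^* X_n B_n X_n^* V_k$ for $j \neq k$. Because $V_j^* V_k = 0$, the augmented matrix $[V_j,V_k]^* X_n$ has i.i.d.\ isotropic sub-Gaussian columns in $\mathbb{C}^{p_j + p_k}$; combining sub-Gaussian sample-covariance concentration with $\|B_n\|$ bounded (via a Hanson--Wright-type estimate plus an $\varepsilon$-net over the unit spheres) yields $\|S_{jk}\| = O(\sqrt{p/n})$ with exponentially summable tails, so Borel--Cantelli gives $\|R_n\| = O(\sqrt{p/n})$ almost surely. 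Next I would verify spectrum separation: applying Theorem~\ref{thm:main_LSD} to each diagonal block $S_{jj}/\alpha_j$ (itself a separable sample covariance of the required form), together with sub-Gaussian tail bounds for the extreme eigenvalues, shows that the spectrum of $S_{jj}$ lies almost surely in a window of width $O(\sqrt{p/n})$ around $\bar b_n \alpha_j$. Hence, for large $n$, the $m$ eigenvalue clusters of $\mathbf{D}(S_n)$ are separated by gaps approaching $g_0 := \tfrac{1}{2}\min_{j\neq k}\bar b\,|\alpha_j - \alpha_k| > 0$; by Weyl, the same cluster structure transfers to $S_n$, and so the $\ell$-th sorted eigenvalues of $S_n$ and $\mathbf{D}(S_n)$ lie in the same cluster for every $\ell$, canonically matching them.

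Within each cluster, Wielandt's inequality applied to $S_n = \mathbf{D}(S_n) + R_n$ then yields
\begin{equation*}
|\lambda_\ell(S_n) - \lambda_\ell(\mathbf{D}(S_n))| \leq \frac{C\,\|R_n\|^2}{g_0} = O(p/n) \quad \text{a.s.},
\end{equation*}
uniformly in $\ell$, whence $|\lambda_\ell(E_n) - \lambda_\ell(D_n)| = O(\sqrt{p/n})$ and $L(F^{D_n}, F^{E_n}) \to 0$ almost surely. The main obstacle is the careful execution of this Wielandt step: the quadratic dependence on $\|R_n\|$ must be justified (e.g.\ through a $2 \times 2$ block Schur-complement reduction iterated across the $m$ clusters), and the eigenvalue-index matching across clusters has to be made rigorous using the crude Weyl bound to establish cluster membership. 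The sub-Gaussianity of the entries is essential here, since it provides the almost-sure control of $\|R_n\|$ needed to make the squared bound negligible on the $\sqrt{p/n}$ scale; under only finite-fourth-moment assumptions one would recover only convergence in probability, consistent with Remark~\ref{rem:eigenvalue_fluctuations}.
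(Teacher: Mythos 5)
Your proposal follows essentially the same route as the paper's proof: it reduces (via Lemma \ref{lem:levy_distance_inequality}) to a uniform bound on the sorted eigenvalue discrepancies, invokes Wielandt's inequality (Lemma \ref{lem:wielandt_inequality}) to obtain the quadratic perturbation bound $O(\|R_n\|^2/\mathrm{gap})$ after establishing spectrum separation from $\|S_{jj}-\bar b_n\alpha_j I_{p_j}\|\to 0$ a.s., and controls the off-diagonal blocks at scale $\sqrt{p/n}$ using Hanson--Wright, an $\epsilon$-net, and Borel--Cantelli. The one minor organizational difference is that the paper splits $S_{21}$ into four pieces (cross terms handled by Vershynin's sub-Gaussian covariance bound, diagonal discrepancy terms by Hanson--Wright plus an $\epsilon$-net), whereas your unified observation---that $[V_j:V_k]^*X_n$ has i.i.d.\ isotropic sub-Gaussian columns and the off-diagonal block of $n^{-1}([V_j:V_k]^*X_n)B_n([V_j:V_k]^*X_n)^*$ has mean zero because $V_j^*V_k=0$---reaches the same $O_{a.s.}(\sqrt{p/n})$ bound in one Hanson--Wright step (the relevant kernel is $(uv^*)\otimes B_n$ with Frobenius norm $\|B_n\|_F=O(\sqrt n)$ and operator norm $O(1)$); likewise the paper spells out the $m=2$ base case and a short induction, which you compress into an iterated $2\times 2$ Schur-complement reduction, but both rest on the same $m=2$ Wielandt argument.
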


\begin{proof}
The first step towards proving Proposition \ref{proposition_levy_F_D_and_F_E}
is to obtain a bound on $L(F^{D_n}, F^{E_n})$ in terms of the differences
between eigenvalues of $D_n$ and $E_n$. Observe first that the eigenvalues of
$E_n$ (not necessarily ordered) are given by $\nu_i = \sqrt{n/p}(\lambda_i(S_n)
- \gamma_i)$, where $\gamma_i$ denotes the $i$-th largest eigenvalue of
$\bar{b}_n A_{p}$. This means in particular that
\begin{equation*}
\gamma_{\sum_{k=1}^{j-1} p_k + l} = \bar{b}_n \alpha_j, ~~l=1,\ldots,p_j,
~~j=1,\ldots,m.
\end{equation*}
Next, since $D_n$ is a block diagonal matrix with diagonal blocks
$S_{jj}-\bar{b}_n \alpha_j I_{p_j}$, whose eigenvalues are given by
$\lambda_k(S_{jj}) - \bar{b}_n \alpha_j$, and since $p/n \to 0$ implies that
$\max_{1\leq k \leq p_j}|\lambda_k(S_{jj}) - \bar{b}_n \alpha_j| \to 0$ a.s.,
it follows that for large enough $n$, almost surely, the eigenvalues of $D_n$
are given by $\mu_i = \sqrt{n/p}(\lambda_i(\mathbf{D}(S_n)) - \gamma_i)$, where
$\gamma_i$'s are as defined above. Thus, applying Lemma
\ref{lem:levy_distance_inequality}, we obtain that, for large enough $n$,
almost surely,
\begin{eqnarray}\label{eq:levy_distance_diagonal}
L^2(F^{D_n},F^{E_n}) &\leq& \frac{1}{p} \sum_{i=1}^p |\mu_i - \nu_i|
\nonumber\\
&=& \frac{1}{p} \sum_{i=1}^p  \sqrt{n/p}|\lambda_i(\mathbf{D}(S_n)) -
\lambda_i(S_n)|.
\end{eqnarray}
From (\ref{eq:levy_distance_diagonal}), it is clear that in order to establish
(\ref{eq:levy_D_n_E_n}) it suffices to show that
\begin{equation}\label{eq:max_eigenvalue_difference}
\max_{1\leq i \leq p} \sqrt{\frac{n}{p}} |\lambda_i(\mathbf{D}(S_n)) -
\lambda_i(S_n)| \stackrel{a.s.}{\longrightarrow} 0.
\end{equation}

We prove (\ref{eq:max_eigenvalue_difference}) for $m=2$. The result for general
$m$ follows by a slight modification of the argument and using a finite
induction. In the following, we use the notation $\xi_n = O_{a.s.}(c_n)$ to
mean that $\xi_n/c_n$ is almost surely bounded for large enough $n$. We need
the following well-known result.

\begin{lemma}(Wielandt's Inequality in Eaton and Tyler \cite{ET})\label{lem:wielandt_inequality}
Consider a Hermitian matrix
\begin{eqnarray*}
A=
\begin{pmatrix}
B       & C\\
C^* & D
\end{pmatrix},
\end{eqnarray*}
where $A$ is $p\times p$ and $B$ is $q\times q$ and $D$ is $r\times r$. Let
$\rho^{2}(C)$ denote the largest eigenvalue of $CC^*$ and let $\alpha_1\geq
\cdots\geq \alpha_p$; $\beta_1\geq \cdots\geq \beta_q$ and $\delta_1 \geq
\cdots\geq \delta_r$ denote the ordered eigenvalues of $A$, $B$ and $D$
respectively. If $\beta_q > \delta_1$, then
\begin{equation}\label{eg:wielandt_1}
0 \leq \alpha_j -\beta_j \leq \rho^2(C)/(\beta_j-\delta_1) \qquad j = 1,
\cdots, q
\end{equation}
and
\begin{equation}\label{eg:wielandt_2}
0\leq \delta_{r-i}-\alpha_{p-i}\leq \rho^2(C)/(\beta_q-\delta_{r-i}) \qquad i
=1, \cdots,r-1.
\end{equation}
\end{lemma}

When $m = 2$, we have
\begin{equation*}
S_n =
\begin{pmatrix}
S_{11} & S_{12}\\
S_{21} & S_{22}
\end{pmatrix}
\qquad and \qquad \bs{D}(S_n) =
\begin{pmatrix}
S_{11} & \mathbf{0}\\
\mathbf{0} & S_{22} \\
\end{pmatrix}.
\end{equation*}
Note that since $\alpha_1 > \alpha_2$ and $\parallel S_{jj} - \bar{b}_n
\alpha_j I_{p_j}\parallel \to 0$ a.s., for $j=1,2$, for large enough $n$ we
have, $\lambda_{p_1}(S_{11}) > \lambda_{1}(S_{22})$, almost surely. Thus,
applying Lemma \ref{lem:wielandt_inequality} to
$\lambda_{i}(S_n)-\lambda_{i}(\bs{D}(S_n))$ for $i = 1, 2 \cdots p_1$, we have
\begin{equation}
\lambda_{i}(S_n)-\lambda_{i}(\bs{D}(S_n)) =
\lambda_{i}(S_n)-\lambda_{i}(S_{11})\leq
\frac{\|S_{12}\|^2}{\lambda_{p_1}(S_{11})-\lambda_1(S_{22})}.
\end{equation}
On the other hand, for $i =1, \cdots p_2-1$, we have
\begin{equation}
\lambda_{p-i}(\bs{D}(S_n))-\lambda_{p-i}(S_n)
=\lambda_{p_2-i}(S_{22})-\lambda_{p-i}(S_n) \leq
\frac{\|S_{12}\|^2}{\lambda_{p_1}(S_{11})-\lambda_{1}(S_{22})}.
\end{equation}

Since $\lambda_{p_1}(S_{11})\stackrel{a.s.}{\longrightarrow} \alpha_1$ and
$\lambda_{1}(S_{22})\stackrel{a.s.}{\longrightarrow} \alpha_2$, we have for $j
=1, \cdots p,$
\begin{equation}\label{equation:difference_on_eigenvalues}
\left|\lambda_{j}(S_n)-\lambda_{j}(\bs{D}(S_n))\right|\leq
\left(\frac{1}{\alpha_1-\alpha_2}+O_{a.s}(1)\right)\|S_{12}\|^2
\end{equation}
We will show that
\begin{equation}\label{eq:S_12_square}
\|S_{12}\|^2=O_{a.s.}(p/n),
\end{equation}
which implies (\ref{eq:max_eigenvalue_difference}).

Showing (\ref{eq:S_12_square}) is equivalent to showing that
$\|S_{21}\|=O_{a.s.}(\sqrt{p/n})$. Observe that, we can write $X_{n}^{*}= [X_1^* :
X_2^*]$ where $X_j$ is $p_j \times n$. Also $V_j^* = [V_{j1}^*:V_{j2}^*]$, for
$j=1,2$, where $V_{11}$ is $p_1 \times p_1$, $V_{12}$ is $p_2 \times p_1$,
$V_{21}$ is $p_1\times p_2$ and $V_{22}$ is $p_2 \times p_2$ matrix. Then,
\begin{eqnarray}
S_{21} := \frac{1}{n} Y_2 Y_1^*
&=& \sqrt{\alpha_1\alpha_2} \frac{1}{n} V_2^* X_n B_{n} X_n^* V_1 \nonumber\\
&=& \sqrt{\alpha_1\alpha_2} \frac{1}{n} (V_{21}^* X_1 + V_{22}^* X_2) B_{n} (X_1^* V_{11} + X_2^* V_{12}) \nonumber\\
&=& \sqrt{\alpha_1\alpha_2} \left(\frac{1}{n}  V_{22}^* X_2 B_{n} X_1^* V_{11}
+ \frac{1}{n}  V_{21}^* X_1 B_{n} X_2^* V_{12} \right) \nonumber\\
&& + \sqrt{\alpha_1\alpha_2} \left(\frac{1}{n}  V_{21}^* X_1 B_{n} X_1^* V_{11}
+   \frac{1}{n}  V_{22}^* X_2 B_{n} X_2^* V_{12}\right) \nonumber\\
&=& \sqrt{\alpha_1\alpha_2} \left(\frac{1}{n}  V_{22}^* X_2 B_{n} X_1^* V_{11}
+ \frac{1}{n}  V_{21}^* X_1 B_{n} X_2^* V_{12} \right) \nonumber\\
&& + \sqrt{\alpha_1\alpha_2} V_{21}^* \left(\frac{1}{n}  X_1 B_{n} X_1^* -  \frac{\tr(B_{n})}{n} I_{p_1}\right) V_{11} \nonumber\\
&& + \sqrt{\alpha_1\alpha_2} V_{22}^* \left(\frac{1}{n}  X_2 B_{n} X_2^* -  \frac{\tr(B_{n})}{n} I_{p_2}\right) V_{12},\\
&=&(\uppercase\expandafter{\romannumeral1}+\uppercase\expandafter{\romannumeral2})
+\uppercase\expandafter{\romannumeral3}+\uppercase\expandafter{\romannumeral4}
\end{eqnarray}
where the last step follows from the fact that $\mathbf{0} = V_2^*V_1 =
V_{21}^* V_{11} + V_{22}^* V_{12}$.

We first show that
$\|\uppercase\expandafter{\romannumeral3}\|=O_{a.s.}(\sqrt{p/n})$. Note that by
Lemma 5.3 in Vershynin \cite{Vershynin},
\begin{eqnarray}\label{eq:bound_term_III}
\|\uppercase\expandafter{\romannumeral3}\|&=&\sqrt{\alpha_1\alpha_2}
\|V_{21}^{*}\left(\frac{1}{n}X_{1}B_{n}X_{1}^{*}-\frac{\tr(B_{n})}{n}I_{p_1}\right)V_{11}\|\nonumber\\
&\leq& \sqrt{\alpha_1\alpha_2}\|\frac{1}{n}X_{1}B_{n}X_{1}^{*}-\frac{\tr(B_{n})}{n}I_{p_1}\|\nonumber\\
&=&\sup_{\mbf{a}\in
\mathcal{S}^{p_1-1}}\left|\mbf{a}^{*}\left(\frac{1}{n}X_1B_{n}X_{1}^{*}
-\frac{\tr(B_{n})}{n}I_{p_1}\right)\mbf{a}\right|\nonumber\\
&\leq& (1-\epsilon)^{-1}\max_{\mbf{a}\in
\mathcal{N}_{\epsilon}(\mathcal{S}^{p_1-1})}\left|
\frac{1}{n}\mbf{a}^{*}X_{1}B_{n}X_{1}^{*}\mbf{a}-\frac{\tr(B_{n})}{n}\right|,
\end{eqnarray}
where $\mathcal{N}_{\epsilon}(\mathcal{S}^{p_1-1})$ is an $\epsilon$-net
covering the sphere $\mathcal{S}^{p_1-1}$ with the cardinality
$|\mathcal{N}_{\epsilon}|\leq (1+2/\epsilon)^{p_{1}}$. We need to show that,
for any $\eta>0,~\exists C_{\eta}>0$ such that
\begin{equation}\label{eq:quadratic_form_tail_bound}
\mP\left(\max_{\mbf{a}\in \mathcal{N}_{\epsilon}(\mathcal{S}^{p_1-1})}\left|
\frac{1}{n}\mbf{a}^{*}X_{1}B_{n}X_{1}^{*}\mbf{a}-\frac{\tr(B_{n})}{n}\right|>C_{\eta}
\sqrt{\frac{p}{n}}\right)\leq \exp\{-\eta p\}.
\end{equation}
%We first present the proof of this result for the case where the entries of
%$\mathbf{X}$ are real-valued. The extension to the complex case is given in
%Appendix \ref{appendix:quadratic_tail_complex_case}.
To this end, we need the following lemma on the concentration of quadratic
forms of sub-Gaussian random variables.
\begin{lemma}\label{lem:Hanson_Wright_inequality}(Hanson-Wright Inequality Theorem 1.1 in Rudelson and Vershynin\cite{RV})
Let $X=(X_1, X_2 ,\cdots, X_n)\in \mR^{n}$ be a random vector with independent
components $X_{i}$ which satisfy $\mE X_{i}=0$ and $\|X_{i}\|_{\psi_2}\leq K$,
where $\parallel \cdot \parallel_{\psi_2}$ denotes the sub-Gaussian norm defined
by $\parallel X_i \parallel_{\psi_2} = \sup_{q \geq 1}
q^{-1/2}(\mathbb{E}|X_i|^q)^{1/q}$. Let $A$ be an $n\times n$ matrix, then for
every $t\geq 0$
\begin{equation}\label{eq:Hanson_Wright_inequality}
\mP\left(|X^{T}AX-\mE X^{T}AX|>t\right)\leq 2 \exp \left\{-c
\mbox{min}\left(\frac{t^2}{K^4\|A\|^2_{HS}}, \frac{t}{K^2\|A\|}\right)\right\}
\end{equation}
\end{lemma}
This lemma applies to both real and complex-valued entries. In order to apply
this result to our setting, we need the vector $X$ to be $y_a = X_1^*
\mathbf{a}$ for any $\mathbf{a}=(a_1,\cdots, a_n)\in S^{n-1}$ and ensure that
there is a uniform finite bound on $\parallel y_{a,i} \parallel_{\psi_2}$ that
does not depend on either $\mathbf{a}$ or $i$. For simplicity, we only provide
the details for the case when $X_{n}$ is real. Thus, let
$y_{a}:=X_{1}^{T}\mathbf{a}$ where $\mathbf{a}=(a_1,\cdots, a_n)\in S^{n-1}$.
Then $y_{a}$ has has i.i.d. sub-Gaussian entries with zero mean, unit variance
and scale parameter $\sigma$. By Lemma 5.5 of Vershynin \cite{Vershynin}, a
random variable is sub-Gaussian if and only if its sub-Gaussian norm is finite
and the sub-Gaussian norm is a constant multiple of the scale parameter
$\sigma$. The sub-Gaussian norm for each entry $y_{a,i}=u_{i}^{T}\mathbf{a}$,
where $u_{i}$ is $i$-th column of $X_{1}$, is given by
\begin{equation*}
\|y_{a,i}\|_{\psi_2}=\sup_{q\geq 1} q^{-1/2} \left(\mE|y_{a,i}|^q\right)^{1/q}.
\end{equation*}
%We claim that the sub-gaussian norm $\|y_{a,i}\|_{\psi_2}$ does not depend on
%the choice of $\mathbf{a}$.
By definition of sub-gaussian random vector and Lemma 5.24 in Vershynin
\cite{Vershynin}, we have for an absolute constant $C$,
\begin{equation*}
\|u_{i}\|_{\psi_{2}}=\sup_{\mathbf{a}\in S^{n-1}}\|u_{i}^T
\mathbf{a}\|_{\psi_{2}}\leq C \max_{1\leq j\leq n}
\|u_{ij}\|_{\psi_2}=C\|u_{11}\|_{\psi_2}:=K.
\end{equation*}
Thus,
\begin{equation*}
\|y_{a,i}\|_{\psi_2}=\|u_{i}^T\mathbf{a}\|\leq \|u_{i}\|_{\psi_{2}}\leq K,
\end{equation*}
and the latter bound does not depend on $\mathbf{a}$ or $i$. Thus, applying
Lemma \ref{lem:Hanson_Wright_inequality}, we can derive that for any $\eta:=
\eta(K, \bar{b}_2)$, there exists $C_{\eta}>0$ and  $N_{\epsilon,\eta}>0$ such
that for $n> N_{\epsilon,\eta}$
\begin{equation*}
\mP\left(\left|\frac{1}{n}y_a^* B_{n}y_a-\frac{1}{n}\tr(B_n)\right|>
C_{\eta(K)}\sqrt{\frac{p}{n}}\right)\leq 2\exp \left\{-\frac{cp}{K^4
\tr(B^2_n)/n}\right\}\leq 2\exp\left\{-\eta p\right\}.
\end{equation*}
This proves (\ref{eq:quadratic_form_tail_bound}).
%We apply this result to our context by setting $\mathbf{y} = X_1^T \mathbf{a}$.
%Then $\mathbf{y}$ has i.i.d. sub-Gaussian entries with zero mean and scale
%parameter $\sigma = 1$.
Thus, by Borel-Cantelli lemma and (\ref{eq:bound_term_III}), we have
$\|\uppercase\expandafter{\romannumeral3}\|=O_{a.s.}(\sqrt{p/n})$. Similarly,
$\|\uppercase\expandafter{\romannumeral4}\|=O_{a.s.}(\sqrt{p/n})$.

Next, we show that
$\|\uppercase\expandafter{\romannumeral1}\|=O_{a.s.}(\sqrt{p/n})$. Note that,
since $V_j^* V_j = I_{p_j}$ for $j=1,2$, we have $\parallel V_{jk}\parallel
\leq 1$ for $1\leq j,k \leq 2$, and hence
\begin{eqnarray*}
\|\uppercase\expandafter{\romannumeral1}\|=\sqrt{\alpha_1\alpha_2}\|V_{22}^{*}X_2B_{n}X_{1}^{*}V_{11}\|\leq
\sqrt{\alpha_1\alpha_2}\|\frac{1}{n}X_1B_nX_2^{*}\|.
\end{eqnarray*}
Let $U_{12}=\frac{1}{n}X_1B_nX_2^{*}$.
%In order to show that $\|U_{12}\|=O_{a.s.}(\sqrt{p/n})$,
We will prove that
$\|U_{12}U_{12}^*\|=\|\frac{1}{n}X_1B_n\frac{1}{n}(X_2^{*}X_{2})B_{n}X_{1}^{*}\|=
O_{a.s.}(p/n)$. Accordingly, define
$\widetilde{D}=n^{-1}B_{n}X_{2}^{*}X_{2}B_{n}$, and note that $\widetilde{D}$
has the same $p_{2}$ non-zero eigenvalues as $n^{-1}X_{2}B_{n}^{2}X_{2}^*$ as
well as $(n-p_2)$ zero eigenvalues. Let
$\widetilde{D}:=Q\widetilde{\Lambda}Q^{*}$ denote the spectral decomposition of
$\widetilde{D}$ where $Q$ is $n \times p_2$ and $\widetilde{\Lambda}$ is a $p_2
\times p_2$ diagonal matrix. Define $\widetilde{X}_{1}=X_{1}Q$ and observe that
the rows of $\widetilde{X}_{1}$ are i.i.d. and sub-Gaussian conditionally on
$X_2$. Also note that if $x_j^*$ and $\widetilde x_j^*$ denote the $j$-th row
of $X_1$ and $\widetilde X_1$, respectively, then $\widetilde x_j = Q^* x_j$,
so that $\mE(\widetilde{x}_{j}\widetilde{x}_{j}^{*}|X_2) =Q^*
\mE(x_{j}x_{j}^{*})Q = I_{p_2}$, which shows that rows of $\widetilde{X}_{1}$
are isotropic random vectors conditionally on $X_2$. In addition,
\begin{equation*}
\parallel U_{12} U_{12}^* \parallel = \parallel \frac{1}{n}\widetilde{X}_{1}\widetilde{\Lambda}\widetilde{X}_{1}^{*} \parallel
\leq \frac{p_2}{n} \parallel \frac{1}{p_2} \widetilde{X}_{1}
\widetilde{X}_{1}^{*}\parallel \parallel \widetilde\Lambda \parallel.
\end{equation*}
Then by Lemma 5.9 and Theorem 5.39 in Vershynin \cite{Vershynin}, applied to the matrix
$p_2^{-1/2} \widetilde{X}_1$, and the fact that the entries of the diagonal
matrix $\widetilde{\Lambda}$ are bounded by $\parallel B_n\parallel^2\parallel
n^{-1} X_2 X_2\parallel$ which is a.s. finite (again, by Theorem 5.39 in
Vershynin \cite{Vershynin}), from the above display we conclude that $\parallel U_{12}
U_{12}^* \parallel = O_{a.s.}(p/n)$. Hence,
$\|\uppercase\expandafter{\romannumeral1}\|= O_{a.s.}(\sqrt{p/n})$ and
similarly,  $\|\uppercase\expandafter{\romannumeral2}\|= O_{a.s.}(\sqrt{p/n})$.
So we finish the proof of Proposition \ref{proposition_levy_F_D_and_F_E} for
$m=2$ case.

We now give a brief outline of the induction argument. Suppose that $\alpha_1
> \cdots > \alpha_m \geq 0$ and (\ref{eq:max_eigenvalue_difference}) holds for
$m=M-1$. We want to establish that the same holds when $m=M$. Accordingly, we
write $S_n$ as
$$
S_n = \begin{pmatrix}
\widetilde S_{M-1,M-1} & \widetilde S_{M-1,M}\\
\widetilde S_{M-1,M}^*& S_{MM}\\
\end{pmatrix}
~~\mbox{and define}~~ \widetilde{\mathbf{D}}(S_n) = \begin{pmatrix}
\widetilde S_{M-1,M-1} & \mathbf{0}\\
\mathbf{0} & S_{MM}\\
\end{pmatrix},
$$
where $\widetilde S_{M-1,M-1}$ is the $(p-p_M)\times (p-p_M)$ principal
submatrix of $S_n$, $\widetilde S_{M-1,M}$ is $(p-p_M) \times p_M$ and $S_{MM}$
is $p_M \times p_M$. The proof follows by first showing that
$$
\max_{1\leq i \leq p} \sqrt{\frac{n}{p}}|\lambda_i(\widetilde{\mathbf{D}}(S_n))
- \lambda_i(S_n)| \stackrel{a.s.}{\longrightarrow} 0,
$$
through proving that $\parallel \widetilde S_{M-1,M}\parallel^2 =
O_{a.s.}(p/n)$, which requires a minor modification of the argument for showing
(\ref{eq:S_12_square}), and then applying the induction hypothesis. The details
are omitted.
\end{proof}

%%%---------------------------------------------------------------------
\section{Simulation}\label{sec:simulation}
In this section we carry out a simulation study to:
\begin{itemize}
\item[(i)] demonstrate  the
convergence of the ESD of $C_n$ to the limiting distribution by considering different
combinations of $(p,n)$;
\item[(ii)] to illustrate the performance of the test based on the statistic $L_n$ proposed in
Section \ref{subsec:application} by considering a specific null $H_{0}:A=A_0;
B=B_{0}$ versus a specific simple alternative $H_{1}: A=A_1;B=B_{1}$.
\end{itemize}
We numerically investigate the convergence of the ESD of
$C_n = \sqrt{n/p}(n^{-1}Y_nY_n^* - n^{-1}\tr(B_{0}) A_{0})$
to the LSD under $H_0$, viz. $F^{A_0,B_0}$. Note that, under $H_0$, $Y_n = A_{0}^{1/2} X_n B_{0}^{1/2}$,
while under $H_1$, $Y_n = A_{1}^{1/2} X_n B_{1}^{1/2}$.
%In Figure \ref{figure:compare_two_true} and Figure \ref{figure:compare_two_false} we display the histograms of
%ESD of $C_n$ computed under $H_0$, i.e.,
We specifically assume that
\begin{equation}\label{eq:A_0_B_0}
A_{0}=\mbox{diag}(\underbrace{1\cdots 1}_{p/3},
\underbrace{2 \cdots 2}_{p/3}, \underbrace{3 \cdots 3}_{p/3}),~~B_{0}= I_{n},
\end{equation}
and
\begin{equation}\label{eq:A_1_B_1}
A_{1}=\mbox{diag}(\underbrace{1.5\cdots 1.5}_{p/3},
\underbrace{2 \cdots 2}_{p/3}, \underbrace{2.5 \cdots 2.5}_{p/3}) ;~B_{1}=
I_{n}.
\end{equation}
Since $B_{n}$ only influences the scale of the spectrum through the factor
$n^{-1}\tr(B_{n}^2)$, for ease of comparison we take $B_{0} = B_{1} = I_n$.

First, to empirically investigate the rate of convergence of the ESD to the LSD,
we simulate data under $H_0$ and plot the relative frequency histogram of eigenvalues of $C_n$
together with the density of the LSD $F^{A_0,B_0}$, denoted by
$f_0$. As indicated in Section \ref{subsec:computation_density}, this involves
solving the following equation for $\beta(x)$:
\begin{equation}\label{polynomial_equation}
18\beta(x)^{4}+33x\beta(x)^{3}+(18x^2+18)\beta(x)^{2}+(3x^{3}+22x)\beta(x)+6x+2
= 0.
\end{equation}
The histograms for five different combinations of $(p,n)$ are shown in Figure \ref{fig:multi_compare}.
As we can see, with increasing values of $p$ and $n$ such that $p/n$ becomes smaller, the
histograms closely match the smooth curve representing the density $f_0$ of the LSD.

%Through comparing the histogram of $C_n$ with the theoretical limiting density
%$f_{0}$ under $H_0$, we see that the histogram in Figure \ref{figure:compare_two_true} matches
%well with its theoretical density line (marked in red), while the one in Figure
%\ref{false} shows considerable departure from the histogram. %%%%-------------------------------------------------------------------------
%\begin{figure}[H]
%\centering
%\begin{minipage}[t]{0.75\textwidth}
%\centering
%\includegraphics[width=0.75\textwidth]{true_compare_n_p.jpeg}
%\caption{Histogram of the eigenvalues of $C_{n}$ from real Gaussian
%distribution versus the theoretical density line $f_{0}(x)$ when $A=A_{0};
%B=B_{0}$.  Left panel: $p=201, n=40000$; right panel : $p=600, n=120000$. }
%\label{figure:compare_two_true}
%\end{minipage}
%\begin{minipage}[t]{0.75\textwidth}
%\centering
%\includegraphics[width=0.75\textwidth]{false_comparison_b_identity.jpeg}
%\caption{Histogram of the eigenvalues of $C_{n}$ from real Gaussian
%distribution versus the theoretical density $f_{0}(x)$ when $A=A_{1}; B=B_{1}$.
%Left panel: $p=201, n=40000$; right panel : $p=600, n=120000$.} \label{false}
%\label{figure:compare_two_false}
%\end{minipage}
%\end{figure}

In addition to the graphical comparison, we also compute the value of the statistic $L_n$ defined
in (\ref{eq:CVM_statistic}), which measures the discrepancy between the ESD of $C_n$ (when
the data follow $H_0$) and the LSD $F^{A_0,B_0}$. We make a three-way comparison, namely,
(i) fixing $p$ and letting $n$ increase; (ii) fixing $p$ and letting $n$ increase; and
(iii) allowing both $p$ and $n$ increase such that $p/n \to 0$. The third
scenario connects directly to the theory developed in this paper. The values of the means
and standard deviations of the statistic $L_n$ based on 100 replicates for each
of the $(p,n)$ combinations are reported in Table \ref{table:convergence_rate}.
\begin{itemize}
\item \textit{Fix $p$, increase $n$:} Along the rows of Table
\ref{table:convergence_rate}, i.e., for a fixed $p$, as $n\to \infty$, the matrix
$C_{n}$ converges in distribution to a
matrix of the form $G_p = p^{-1/2} \sqrt{\bar{b}_2} A_p^{1/2} W_p A_p^{1/2}$ where $W_p$ is a $p\times p$
(real or complex) Wigner matrix, and so the ESD of $C_n$ converges to that of $G_p$
which is different from $F^{A_0,B_0}$. As can be seen from Table \ref{table:convergence_rate}, that
along the rows, with increasing $n$, the mean value of $L_{n}$ stabilizes to a nonzero
value due to the fact that
the LSD of $F^{C_n}$ is a limit distribution that is different from $F^{A_0,B_0}$.
\item
\textit{Fix $n$, increase $p$:} This comparison relates to the columns of Table
\ref{table:convergence_rate}.  The limiting behavior of $F^{C_n}$ under this setting
is unclear and is beyond the scope of this paper. However, for any given $n$,
for large enough $p$, the ESD of $C_n$ will be quite different
from $F^{A_0,B_0}$.
\item\textit{$p$, $n$ both increase such that $p/n\to 0$:} This is the setting
studied in this paper. For this comparison, we focus on the main diagonal of Table
\ref{table:convergence_rate}
Under this setting, $F^{C_n}$ converges to $F^{A_0,B_0}$ almost surely. The
mean and $\pm 2$ standard deviation bars are depicted in Figure \ref{figure:ratio_mean_std_L},
with $p/n$ taking values $33/1000$, $66/3300$, $99/10000$, $201/40000$ and $600/240000$,
respectively. We observe that both the mean and standard deviation of $L_n$ decrease to zero
as $p/n$ decreases to zero. This observation is consistent with
the comparison of the histograms of eigenvalues of $C_n$ for the same combinations
of $(p,n)$ as depicted in Figure \ref{fig:multi_compare}.
\end{itemize}
\begin{table}[H]
\begin{center}
\caption{Mean and standard deviations (within parentheses) of $L_n$ under $H_0$.} {\footnotesize}
\begin{tabular}{|c |c*{4}{c}|c|}
\hline
~~~~~~$n$  &1000&3300&10000&40000& 240000\\
$p$ &&&& &\\
\hline
33& 0.0050 &0.0044 &0.0042 &0.0037 & 0.0041\\
& (0.0021) & (0.0020) & (0.0018) & (0.0015) & (0.0017)\\
\hline
66 &0.0033&0.0018&0.0013&0.0011&0.0011\\
      &(8.9903e-4)&(6.1469e-4)&(4.5269e-4)&(3.4770e-4)&(3.7956e-4)\\
\hline
99 &0.0037& 0.0015&8.3441e-4&6.5708e-4&5.6750e-4\\
      &(8.0365e-4)&(4.2526e-4)&(2.2154e-4)&(2.6689e-4)&(2.0820e-4)\\
\hline
201 &0.0065&0.0020&8.1588e-4&3.0589e-4&1.7812e-4 \\
        &(5.0315e-4)&(2.7464e-4)&(1.7132e-4)&(8.0617e-5)&(6.2289e-5)\\
\hline
600 &0.0193&0.0058&0.0019&4.9400e-4&1.0062e-4\\
        &(2.7617e-4)&(1.4565e-4)&(8.4915e-5)&(3.9138e-5)&(1.7237e-5)\\
\hline
\end{tabular}
\label{table:convergence_rate}
\end{center}
\end{table}
%%%%-----------------------------------------------------
\begin{figure}%[H]
\centering
\includegraphics[width=0.75\textwidth]{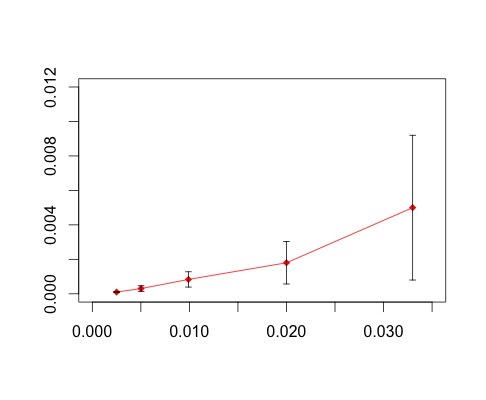}
\caption{Mean $\pm$ 2 $\times$ standard deviation of $L_n$ under different $p/n$ ratios.}
\label{figure:ratio_mean_std_L}
\end{figure}
%%%%------------------------------------------------------------------------

%%%%------------------------------------------------------------------------
\begin{figure}%[H]
\centering
\includegraphics[width=0.75\textwidth]{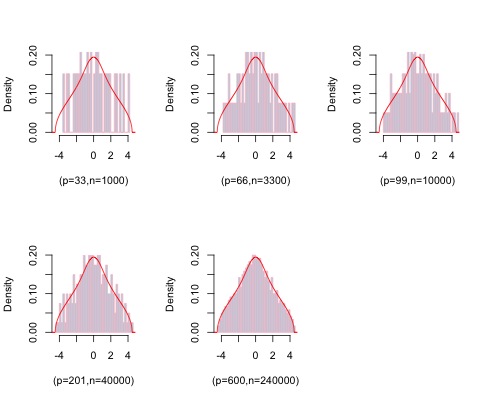}
\caption{Histogram of the eigenvalues under five different combinations of $(p,n)$.}
\label{fig:multi_compare}
\end{figure}
%%%%-------------------------------------------------------------------------

%\subsection{Study on $L_{n}$ and $V_{n}$}\label{section_H_1}
Next, we show the performance of the test for $H_{0}:A=A_0;
B=B_{0}$ versus $H_{1}: A=A_1;B=B_{1}$ based on the test statistic of $L_n$, where $A_{j}$ and $B_{j}$, $j=0,1$
are defined in (\ref{eq:A_0_B_0}) and (\ref{eq:A_1_B_1}).
Rather than performing the test at a specific level of significance, we compute
the quantiles of the distribution of $L_n$ under $H_0$ and $H_1$ corresponding
to a given set of probabilities. In order to
evaluate the quantiles empirically, we simulate 500 replicates for each setting.
The quantiles of the test statistics $L_n$ under $H_1$ are plotted against
those under $H_0$ in Figure \ref{l2_qq_plot}. Since the points lie well above
the $45^o$ line, it shows that the test is able to reject
the null hypothesis at any reasonable level of significance when the data
are generated under the alternative.

\begin{figure}[t]
\centering
\includegraphics[width=0.75\textwidth]{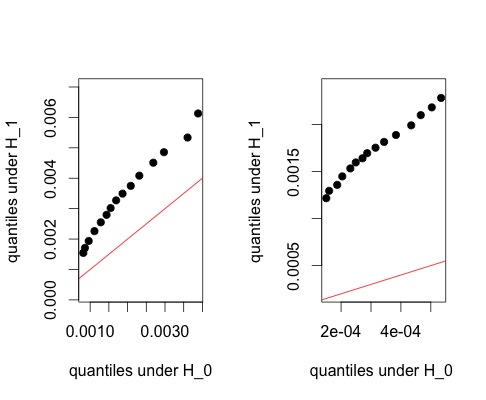}
\caption{QQ plot of the test statistic $L_n$ under $H_{0}$ versus under
$H_{1}$. Left panel: $p=66, n=3300$; right panel : $p=201, n=40000$.}
\label{l2_qq_plot}
\end{figure}

The numerical values of the quantiles of the distribution of $L_n$ under $H_0$
and $H_1$ are given in Table \ref{table:quantiles}. It shows that especially
for $p = 201$; $n = 40000$ setting, the effective supports of the distributions
of the test statistic are essentially separated under $H_0$ and $H_1$, indicating that the test
is able to clearly discriminate between the two hypotheses.

\begin{table}[H]
\begin{center}
\caption{Quantiles of $L_n$ under $H_0$ and $H_1$ for $(p,n) = (66,3300)$ and $(201,40000)$.} {\footnotesize}
\begin{tabular}{|c|cc|cc|}
\hline
 &  \multicolumn{2}{c|}{(66,3300)} & \multicolumn{2}{c|}{(201,40000)}\\
Probability      &$H_0$&$H_1$&$H_0$&$H_1$\\
 \hline
0.01              &0.0008&0.0015&1.506e-4& 0.0012\\
0.02              &0.0009&0.0017&1.601e-4&0.0013\\
0.05              &0.0010&0.0019&1.871e-4&0.0014\\
0.1                &0.0011&0.0023&2.040e-4&0.0014\\
0.2                &0.0013&0.0026&2.309e-4&0.0015\\
0.3                &0.0014&0.0028&2.491e-4&0.0016\\
0.4                &0.0015&0.0030&2.716e-4&0.0016\\
0.5                &0.0017&0.0033&2.872e-4&0.0017\\
0.6                &0.0019&0.0035&3.151e-4&0.0018\\
0.7                &0.0021&0.0037&3.438e-4&0.0018\\
0.8                &0.0023&0.0041&3.835e-4&0.0019\\
0.9                &0.0027&0.0045&4.345e-4&0.0020\\
0.95              &0.0030&0.0049&4.664e-4&0.0021\\
0.98              &0.0036&0.0053&5.033e-4&0.0022\\
0.99              &0.0039&0.0061&5.343e-4&0.0023\\
\hline
\end{tabular}
\label{table:quantiles}
\end{center}
\end{table}

%%%----------------------------------------------------------------------
\section*{\ackname}
The authors thank the anonymous referees for their valuable suggestions regarding
improving the quality of the manuscript.
This work was done during a visit of the first author to the Department of
Statistics, University of California, Davis. Wang was partially supported by
NSFC grant 11071213, NSFC 11371317, NSFC grant 11101362, ZJNSF grant R6090034 and SRFDP grant
20100101110001. Paul was partially supported by the NSF grants DMR-1035468 and
DMS-1106690.

%%%----------------------------------------------------------------------

\newpage

%%%%%%%%%---------------------------------------------------------------
\section{Appendix}

\subsection{Auxiliary lemmas}

\begin{lemma}\label{lemma:quad_bound}
(Lemma 2.6 of Silverstein and Bai \cite{SB}): Let $z \in \mathbb{C}^+$ with $v
= \Im(z)$. Let $D$ and $F$ be $n \times n$ matrices with $D$ Hermitian, and let
$\mbf{r} \in \mathbb{C}^n$. Then,
$$
\left|\tr\left(((D-zI)^{-1} - (D+\mbf{r}\mbf{r}^* - zI)^{-1})F\right)\right| =
\left|\frac{\mbf{r}^* (D-zI)^{-1} F (D-zI)^{-1}
\mbf{r}}{1+\mbf{r}^*(D-zI)^{-1}\mbf{r}}\right| \leq \frac{\parallel F
\parallel}{v}~.
$$
\end{lemma}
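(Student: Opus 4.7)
\vskip.1in
\textbf{Proof plan for Lemma \ref{lemma:quad_bound}.} The plan is to first convert the difference of resolvents into a rank-one expression via the Sherman--Morrison formula, then reduce the trace to a scalar quadratic form using the cyclic property of the trace, and finally bound the resulting ratio by exploiting the spectral decomposition of the Hermitian matrix $D$.

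First, I would apply the Sherman--Morrison identity to obtain
\[
(D-zI)^{-1} - (D+\mathbf{r}\mathbf{r}^* - zI)^{-1}
= \frac{(D-zI)^{-1}\mathbf{r}\mathbf{r}^*(D-zI)^{-1}}{1+\mathbf{r}^*(D-zI)^{-1}\mathbf{r}}.
\]
Multiplying by $F$, taking the trace, and using $\tr(\mathbf{r}\mathbf{r}^* M) = \mathbf{r}^* M \mathbf{r}$ with $M = (D-zI)^{-1}F(D-zI)^{-1}$ delivers the first equality in the statement. This is a purely algebraic identity, so it takes no real work.

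The main content is the norm bound. Write $R := (D-zI)^{-1}$. Because $D$ is Hermitian, let $D = U\Lambda U^*$ be its spectral decomposition with $\Lambda = \mathrm{diag}(\lambda_1,\dots,\lambda_n)$, and set $\mathbf{s} = U^* \mathbf{r}$. Then $R$ and $R^*$ commute (both are functions of $D$), so $\|R\mathbf{r}\|^2 = \|R^*\mathbf{r}\|^2 = \mathbf{r}^* R^* R \mathbf{r} = \sum_i |s_i|^2/|\lambda_i - z|^2$. By Cauchy--Schwarz applied to $\mathbf{r}^* R F R \mathbf{r} = (R^*\mathbf{r})^* F (R\mathbf{r})$,
\[
|\mathbf{r}^* R F R \mathbf{r}| \;\leq\; \|F\|\cdot \|R^*\mathbf{r}\|\cdot \|R\mathbf{r}\| \;=\; \|F\| \sum_{i} \frac{|s_i|^2}{|\lambda_i - z|^2}.
\]
For the denominator, I would extract the imaginary part: $\Im(1+\mathbf{r}^* R\mathbf{r}) = \sum_i |s_i|^2 v/|\lambda_i - z|^2$, whence $|1+\mathbf{r}^* R \mathbf{r}| \geq v \sum_i |s_i|^2/|\lambda_i - z|^2$. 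Dividing the numerator bound by this denominator bound causes the spectral sums to cancel, yielding exactly $\|F\|/v$.

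There is no serious obstacle here; the only subtlety worth flagging is that the two $R$ factors in the numerator do not combine directly (one should be thought of as acting on $\mathbf{r}$ from the right and the other as $R^*\mathbf{r}$ from the left after conjugation), which is why the Hermiticity of $D$, through the identity $R R^* = R^* R$, is essential for producing the same spectral sum as appears in the lower bound on the denominator.
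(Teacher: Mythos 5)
The paper does not prove this lemma---it cites it as Lemma 2.6 of Silverstein and Bai and uses it as a black box---so there is no in-paper argument to compare against. Your proof is correct and is the standard one: Sherman--Morrison plus cyclicity of the trace give the algebraic identity, Cauchy--Schwarz bounds the numerator by $\|F\|\,\|R^*\mathbf{r}\|\,\|R\mathbf{r}\|$, and the denominator is bounded below by its imaginary part $v\|R\mathbf{r}\|^2$, the key point being $\|R\mathbf{r}\|=\|R^*\mathbf{r}\|$, which follows from the normality of $R=(D-zI)^{-1}$. A minor stylistic remark: the spectral decomposition can be avoided altogether, since the resolvent identity $R-R^*=2\mathrm{i}v\,RR^*$ gives $\Im(\mathbf{r}^*R\mathbf{r})=v\|R^*\mathbf{r}\|^2$ directly with no sum over eigenvalues; this also makes the (harmless) degenerate case $\mathbf{r}=\mathbf{0}$ transparent, where both your numerator and denominator bounds vanish but the original ratio is simply $0$.
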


\begin{lemma}\label{eg:burkholder_inequality}
(Burkh\"{o}lder's Inequality): Let $\{X_{k}\}$ be a complex martingale
difference sequence with respect to the increasing $\sigma$-field
$\{\mathcal{F}_{k}\}$. Then for $p >1,$
$$\mE\left|\sum X_{k}\right|^{p}\leq K_{p}\mE\left(\sum |X_{k}|^2\right)^{p/2}.
$$
\end{lemma}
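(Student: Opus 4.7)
The plan is to establish the moment bound $\mE|S_n|^{p}\leq K_p\mE Q_n^{p/2}$, where $S_n:=\sum_{k\leq n} X_k$ and $Q_n:=\sum_{k\leq n}|X_k|^{2}$, by separating the regimes $p\geq 2$ and $1<p<2$. The case $p=2$ falls out immediately: the martingale-difference hypothesis $\mE[X_k\mid\mathcal{F}_{k-1}]=0$ together with the tower property of conditional expectation gives $\mE(X_j\bar X_k)=0$ for $j\neq k$, so that $\mE|S_n|^{2}=\sum_k\mE|X_k|^{2}=\mE Q_n$, yielding the claim with $K_2=1$.

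For general $p\geq 2$, I would telescope $|S_n|^{p}=\sum_{k=1}^{n}\bigl(|S_k|^{p}-|S_{k-1}|^{p}\bigr)$ and Taylor-expand the scalar map $t\mapsto|t|^{p}$ about $S_{k-1}$. The first-order term in $X_k$ vanishes in conditional expectation by the martingale difference property; the second-order term is controlled by $|S_{k-1}|^{p-2}|X_k|^{2}$, and the remainder is majorised by $|X_k|^{p}$. Using $\sum_k|X_k|^{p}\leq Q_n^{p/2}$ for $p\geq 2$, one obtains
$$
\mE|S_n|^{p}\;\leq\; C_p\,\mE\sum_{k}|S_{k-1}|^{p-2}|X_k|^{2}\;+\;C_p\,\mE Q_n^{p/2}.
$$
Applying H\"older with exponents $p/(p-2)$ and $p/2$ to the first sum, followed by Doob's $L^{p}$ maximal inequality, yields
$$
\mE\sum_{k}|S_{k-1}|^{p-2}|X_k|^{2}\;\leq\;C_p\bigl(\mE|S_n|^{p}\bigr)^{(p-2)/p}\bigl(\mE Q_n^{p/2}\bigr)^{2/p},
$$
placing $\mE|S_n|^{p}$ on both sides in a self-bounded form from which the desired bound follows by rearrangement.

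For $1<p<2$, the cleanest route is a duality argument: pair $S_n$ with a bounded test martingale and invoke the $p\geq 2$ bound at the conjugate exponent $q=p/(p-1)>2$; alternatively, apply the Davis decomposition to split each $X_k$ into a predictable uniformly bounded part and a pure-jump part, and handle these separately via the $p\geq 2$ case and a good-$\lambda$ inequality respectively. The main technical obstacle is precisely this $1<p<2$ regime, which requires delicate control of the maximal function by the square function; the $p\geq 2$ case---which is the only one actually needed for the applications in this paper, e.g.\ in the appendix estimates of $d_{32}$ and $\mE\|r_j\|^{2k}$---follows cleanly from the self-bounded Taylor-expansion argument described above.
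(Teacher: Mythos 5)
The paper states this lemma as a classical result (Burkh\"{o}lder's square-function inequality) and supplies no proof of its own; only the $p=2$ case, which is orthogonality of martingale differences with $K_2=1$, is actually invoked in the appendix bound on $d_{32}$. So there is no paper argument to compare against, and your proposal should be judged against the standard literature proof.

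Your $p\geq 2$ sketch is in the right spirit---telescope, kill the linear term by the martingale-difference property, H\"{o}lder plus Doob---but two points need sharpening. First, for $2\leq p<3$ the map $t\mapsto|t|^p$ is not $C^3$, so one cannot literally invoke a third-order Taylor remainder; the correct substitute is the elementary pointwise inequality
$$
|x+y|^p\;\leq\;|x|^p+p|x|^{p-2}\Re(\bar{x}y)+C_p\bigl(|x|^{p-2}|y|^2+|y|^p\bigr),
$$
valid for all complex $x,y$ and $p\geq 2$, which has to be proved directly (e.g.\ by treating $|y|\leq|x|$ and $|y|>|x|$ separately). Second, the self-bounding rearrangement at the end is only legitimate once $\mE|S_n|^p<\infty$ is known a priori; you need either a truncation of the increments followed by a Fatou passage, or a stopping-time localization, before you may divide through. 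For $1<p<2$ what you have written is a pointer rather than a proof: the duality route needs the \emph{reverse} inequality $\mE Q_n^{q/2}\leq C_q\,\mE|S_n|^q$ at the conjugate exponent $q=p/(p-1)>2$, which is a separate theorem requiring its own argument, while the Davis route needs the full good-$\lambda$ machinery. Since the paper only invokes $p=2$, none of this affects the applications, but the lemma as stated covers all $p>1$ and a complete proof would have to close these gaps.
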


\begin{lemma}\label{lemma:moments_of_quadratic_forms}
(Lemma 8.10 of Silverstein and Bai \cite{SB}): Let $A = (a_{ij})$ be an
$n\times n$ non-random matrix and $X=(x_1, \cdots, x_n)^{'}$ be random vector
of independent entries. Assume that $\mE x_{i}=0,$ $\mE|x_{i}|^2=1$ and
$\mE|x_{j}|^{l}\leq \nu_{l}$. Then for any $q\geq 2$,
\begin{equation*}
\mE|X^{*}AX-\tr(A)|^{q}\leq
C_{q/2}\left(\nu_{2q}\tr(AA^{*})^{q/2}+(\nu_{4}tr(AA^*)^{q/2})\right)
\end{equation*}
where $C_{q}$ is a constant depending on $p$ only.
\end{lemma}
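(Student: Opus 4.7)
The plan is to split $X^{*}AX-\tr(A)$ into a diagonal and an off-diagonal part and to estimate each using classical moment inequalities. Writing
\begin{equation*}
X^{*}AX-\tr(A)\;=\;\sum_{i=1}^{n}a_{ii}(|x_i|^{2}-1)\;+\;\sum_{i\ne j}a_{ij}\bar x_i x_j\;=:\;D+O,
\end{equation*}
one has $\mathbb{E}|X^{*}AX-\tr(A)|^{q}\le 2^{q-1}(\mathbb{E}|D|^{q}+\mathbb{E}|O|^{q})$, so it suffices to control $D$ and $O$ separately.

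For the diagonal piece $D$, the summands are independent and centred, with $\mathbb{E}|a_{ii}(|x_i|^{2}-1)|^{q}\le 2^{q}|a_{ii}|^{q}\nu_{2q}$ and variance bounded by $\nu_4|a_{ii}|^{2}$. Rosenthal's inequality (a standard corollary of Lemma~\ref{eg:burkholder_inequality} applied to the canonical filtration) yields
\begin{equation*}
\mathbb{E}|D|^{q}\;\le\;C_q\Bigl[\bigl(\nu_4\textstyle\sum_i|a_{ii}|^{2}\bigr)^{q/2}+\nu_{2q}\sum_i|a_{ii}|^{q}\Bigr],
\end{equation*}
which is of the desired form after using $\sum_i|a_{ii}|^{2}\le\tr(AA^{*})$ and $\sum_i|a_{ii}|^{q}\le\tr((AA^{*})^{q/2})$. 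The latter follows from the pointwise bound $|a_{ii}|^{q}\le (AA^{*})_{ii}^{q/2}$ combined with the convexity inequality $(AA^{*})_{ii}^{q/2}\le((AA^{*})^{q/2})_{ii}$, both valid for $q\ge 2$.

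For the off-diagonal piece $O$, introduce the filtration $\mathcal{F}_k:=\sigma(x_1,\ldots,x_k)$ and the martingale differences
\begin{equation*}
Y_k\;:=\;\mathbb{E}[O\mid\mathcal{F}_k]-\mathbb{E}[O\mid\mathcal{F}_{k-1}]\;=\;\bar x_k\sum_{j<k}a_{jk}x_j+x_k\sum_{j<k}a_{kj}\bar x_j.
\end{equation*}
Burkholder's inequality, in its Rosenthal form, then gives
\begin{equation*}
\mathbb{E}|O|^{q}\;\le\;K_q\Bigl\{\mathbb{E}\bigl(\textstyle\sum_k\mathbb{E}_{k-1}|Y_k|^{2}\bigr)^{q/2}+\sum_k\mathbb{E}|Y_k|^{q}\Bigr\}.
\end{equation*}
The conditional variance $\mathbb{E}_{k-1}|Y_k|^{2}$ is a quadratic form in $(x_j)_{j<k}$ whose total expectation is at most $2\nu_4\tr(AA^{*})$, which will ultimately produce the $(\nu_4\tr(AA^{*}))^{q/2}$ contribution. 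The individual moments $\mathbb{E}|Y_k|^{q}$ are estimated by conditioning on $\mathcal{F}_{k-1}$ and applying a further round of Burkholder to the linear forms $\sum_{j<k}a_{jk}x_j$ and $\sum_{j<k}a_{kj}\bar x_j$; telescoping these bounds across $k$ furnishes the $\nu_{2q}\tr((AA^{*})^{q/2})$ contribution.

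The main obstacle is estimating the $q/2$-th moment of $\sum_k\mathbb{E}_{k-1}|Y_k|^{2}$: this is itself a quadratic form in independent centred variables of the same structural flavour as the quantity being bounded, so a naïve recursion does not close. I would resolve this by induction on $q$ along dyadic scales, using as base case $q=2$ (where $\mathbb{E}|D|^{2}$ and $\mathbb{E}|O|^{2}$ are computed directly), and at each inductive step extracting the deterministic mean $\mathbb{E}\sum_k\mathbb{E}_{k-1}|Y_k|^{2}\le 2\nu_4\tr(AA^{*})$ separately while controlling its centred fluctuation at the strictly smaller exponent $q/2$ via the induction hypothesis. The induction terminates after $O(\log q)$ steps with the claimed constants.
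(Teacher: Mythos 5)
The paper does not prove this lemma; it invokes it as a known auxiliary result from the Bai--Silverstein literature, so there is no in-paper argument to compare against. Your sketch is, in spirit, a reconstruction of the published proof: the decomposition into a diagonal sum of independent centred variables and an off-diagonal martingale, Rosenthal for the former, Burkholder--Rosenthal for the latter, and the recognition that the conditional-variance term $\sum_k\mathbb{E}_{k-1}|Y_k|^2$ is itself a quadratic form that must be tamed by a descent in the exponent --- these are precisely the standard ingredients, and you have correctly located the one genuinely delicate step. The pointwise estimates $|a_{ii}|^q\le (AA^*)_{ii}^{q/2}\le((AA^*)^{q/2})_{ii}$ used for the diagonal part are also correct for $q\ge 2$.

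Two remarks if you write this out in full. First, the mean $\mathbb{E}\sum_k\mathbb{E}_{k-1}|Y_k|^2$ is bounded by $2\tr(AA^*)$ with no $\nu_4$; the $\nu_4$ in the final estimate enters through the diagonal variance and through the normalisation $\nu_4\ge 1$, so your bound $2\nu_4\tr(AA^*)$ is correct but unnecessarily loose and slightly misattributes where the moment constant originates. Second, closing the dyadic recursion is where essentially all of the work lies: you must express $\sum_k|b_k|^2$ and $\sum_k|c_k|^2$ as quadratic forms $X^*\tilde A X$ with $\tilde A\succeq 0$, check $\tr\tilde A\le\tr(AA^*)$, and then verify that $(\nu_4\tr(\tilde A^2))^{q/4}$ and $\nu_q\tr(\tilde A^{q/2})$ are dominated by the two terms in the claimed bound for $A$ --- the first via $\tr(\tilde A^2)\le(\tr\tilde A)^2$, the second via Schatten-norm comparisons together with $\sum_{j,k}|a_{jk}|^q\le\tr((AA^*)^{q/2})$ for $q\ge 2$. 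None of this fails, but it is several lines of bookkeeping, not an $O(\log q)$ one-liner, and should be spelled out rather than asserted.
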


The following lemma is a consequence of Theorem A.38 and Remark A.39 in Bai and
Silverstein \cite{BSbook}.
\begin{lemma}\label{lem:levy_distance_inequality}
Let $\{\lambda_k\}_{k=1}^n$ and $\{\delta_k\}_{k=1}^n$ be two sets of real and
let their empirical distributions be denoted by $F$ and $G$, respectively.
Then, for any $\alpha > 0$,
\begin{equation}\label{eq:levy_distance_inequality}
L^{\alpha + 1}(F,G)\leq \min_{\pi} \frac{1}{n} \sum_{k=1}^n |\lambda_k -
\delta_{\pi(k)}|^\alpha,
\end{equation}
where the minimum is taken over all permutation $\pi$ of the indices
$\{1,\ldots,n\}$, and $L(F,G)$ denotes the L\'{e}vy distance between the
distributions $F$ and $G$.
\end{lemma}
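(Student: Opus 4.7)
The plan is to deduce the inequality directly from the definition of the L\'{e}vy distance by a counting argument, bypassing any reduction to sorted sequences. Writing $\epsilon := L(F,G)$, I fix an arbitrary $\epsilon' \in (0,\epsilon)$. By definition of the L\'{e}vy distance, there exists some $x \in \mathbb{R}$ at which the L\'{e}vy inequality fails at level $\epsilon'$; by symmetry I may assume $G(x) > F(x+\epsilon') + \epsilon'$. My aim is to show that for \emph{every} permutation $\pi$ of $\{1,\ldots,n\}$, the sum $\frac{1}{n}\sum_{k=1}^{n}|\lambda_k - \delta_{\pi(k)}|^\alpha$ exceeds $(\epsilon')^{\alpha+1}$, so that letting $\epsilon' \nearrow \epsilon$ yields the claim.

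The key observation is that the cardinalities $\#\{k : \delta_{\pi(k)} \le x\} = nG(x)$ and $\#\{k : \lambda_k \le x+\epsilon'\} = nF(x+\epsilon')$ are independent of the permutation $\pi$ (the first being a relabeling and the second involving no $\pi$ at all). An elementary inclusion--exclusion argument on the sets
$J_1 = \{k : \lambda_k > x+\epsilon'\}$ and $J_2 = \{k : \delta_{\pi(k)} \le x\}$
then shows that $|J_1 \cap J_2| \ge n(G(x) - F(x+\epsilon')) > n\epsilon'$. For every $k \in J_1 \cap J_2$ we have $\lambda_k - \delta_{\pi(k)} > \epsilon'$, so $|\lambda_k - \delta_{\pi(k)}|^\alpha > (\epsilon')^\alpha$. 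Summing over these indices and dividing by $n$ gives
\begin{equation*}
\frac{1}{n}\sum_{k=1}^{n}\bigl|\lambda_k - \delta_{\pi(k)}\bigr|^\alpha \;>\; \frac{|J_1 \cap J_2|}{n}\,(\epsilon')^{\alpha} \;>\; (\epsilon')^{\alpha+1}.
\end{equation*}

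Since this bound is uniform in $\pi$, taking the minimum over $\pi$ and then letting $\epsilon' \nearrow L(F,G)$ yields $\min_{\pi} \frac{1}{n}\sum_k |\lambda_k - \delta_{\pi(k)}|^\alpha \ge L(F,G)^{\alpha+1}$, as required. The symmetric case $F(x-\epsilon') > G(x) + \epsilon'$ is handled by swapping the roles of the two sequences. I do not anticipate a serious obstacle: the only point requiring care is to manage the strict-versus-weak inequality in the definition of $L(F,G)$ (the L\'{e}vy distance is the infimum of $\epsilon$ satisfying the two inequalities for all $x$, with a non-strict formulation), so one must approach $L(F,G)$ from below via $\epsilon' \nearrow L(F,G)$ in order to guarantee that some $x$ witnesses the violation strictly. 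Apart from that minor bookkeeping, the argument is a one-line counting bound.
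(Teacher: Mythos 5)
Your argument is correct. The paper does not actually give a proof of this lemma; it simply appeals to Theorem A.38 and Remark A.39 of Bai and Silverstein \cite{BSbook}, which establish $L^{\alpha+1}(F,G) \leq n^{-1}\sum_{k}|\lambda_{(k)}-\delta_{(k)}|^\alpha$ for the increasingly ordered sequences, the $\min_\pi$ form then following because any rearrangement can only increase that sum. Your counting argument bypasses both the citation and the sorting step: since the cardinalities $|J_1|=n(1-F(x+\epsilon'))$ and $|J_2|=nG(x)$ depend only on the underlying multisets and not on the pairing, the bound $n^{-1}\sum_k|\lambda_k-\delta_{\pi(k)}|^\alpha > (\epsilon')^{\alpha+1}$ holds uniformly over \emph{all} permutations $\pi$, so the $\min_\pi$ claim comes for free rather than via a separate rearrangement lemma (whose form for $0<\alpha<1$ is itself a point one would need to check). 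You also correctly flag the one technical subtlety — taking $\epsilon'$ strictly below $L(F,G)$ so that some $x$ genuinely witnesses the violation, and then letting $\epsilon' \nearrow L(F,G)$ — and your handling of the symmetric case by swapping the roles of the two sequences (under which the minimized sum is invariant) is fine. Overall this is a clean, self-contained, and slightly more general route than the one the paper outsources to the reference.
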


\begin{lemma}\label{lem:vershynin_berstein_tail}
(Bernstein's inequality): Let $X_{1}, \cdots, X_{N}$ be independent centered
sub-exponential random variables, and $K=\max_{i}\|X_{i}\|_{\psi_{1}}$ where
$\|X\|_{\psi_1}=\sup_{p\geq 1}p^{-1}\left(\mE|X|^p\right)^{1/p}.$ Then for
every $a=(a_1, \cdots, a_{N})\in \mR^{N}$ and every $t\geq 0,$ we have
\begin{equation*}
\mP\left(\left|\sum_{i=1}^{N}a_{i}X_{i}\right|>t\right)\leq
2\exp\left\{-c\min\left(\frac{t^2}{K^2\|a\|_{2}^2},
\frac{t}{K\|a\|_{\infty}}\right)\right\}
\end{equation*}
\end{lemma}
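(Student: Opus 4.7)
\textbf{Proof proposal for Lemma \ref{lem:vershynin_berstein_tail}.}

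The plan is to apply a standard Chernoff-style argument based on an exponential moment bound for each summand, then optimize in the dual parameter. The first step is to convert the sub-exponential norm condition into a quantitative bound on the moment generating function. Specifically, from the equivalent characterizations of sub-exponentiality (see Vershynin \cite{Vershynin}, Lemma 5.15), there exist absolute constants $c_1, C_1 > 0$ such that, since each $X_i$ is centered with $\|X_i\|_{\psi_1} \leq K$,
\begin{equation*}
\mE \exp(\lambda X_i) \leq \exp(C_1 \lambda^2 K^2) \qquad \text{whenever } |\lambda| \leq \frac{c_1}{K}.
\end{equation*}

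Next, by independence, for any $\lambda \in \mathbb{R}$ with $|\lambda a_i| \leq c_1/K$ for all $i$, equivalently $|\lambda| \leq c_1/(K\|a\|_\infty)$, one has
\begin{equation*}
\mE \exp\Bigl(\lambda \sum_{i=1}^N a_i X_i\Bigr) = \prod_{i=1}^N \mE \exp(\lambda a_i X_i) \leq \exp\Bigl(C_1 \lambda^2 K^2 \|a\|_2^2\Bigr).
\end{equation*}
Markov's inequality then yields, for each $t \geq 0$ and admissible $\lambda \geq 0$,
\begin{equation*}
\mP\Bigl(\sum_{i=1}^N a_i X_i > t\Bigr) \leq \exp\Bigl(-\lambda t + C_1 \lambda^2 K^2 \|a\|_2^2\Bigr).
\end{equation*}

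The core of the proof is the optimization over $\lambda \in [0, c_1/(K\|a\|_\infty)]$. The unconstrained minimizer is $\lambda^\ast = t/(2 C_1 K^2 \|a\|_2^2)$, producing the Gaussian-type bound $\exp\bigl(-t^2/(4 C_1 K^2 \|a\|_2^2)\bigr)$. This is the right bound precisely when $\lambda^\ast$ is admissible, i.e., when $t/\|a\|_2^2 \leq (2 C_1 c_1 K)/\|a\|_\infty$, which is the small-deviation regime. In the complementary regime one takes $\lambda = c_1/(K\|a\|_\infty)$ at the boundary, and the exponent becomes linear in $t/(K\|a\|_\infty)$, yielding the exponential-type bound. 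A routine bookkeeping of constants merges these two regimes into a single estimate proportional to $\min\bigl(t^2/(K^2 \|a\|_2^2),\ t/(K\|a\|_\infty)\bigr)$. Applying the same argument to $-\sum_i a_i X_i$ and taking a union bound supplies the factor of $2$.

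The main obstacle is not the optimization itself, which is elementary, but rather establishing the MGF control from the sub-exponential norm condition with explicit constants, and then tracking constants carefully through the two regimes to arrive at the single clean bound stated. Once the MGF estimate is in hand, the remainder is a classical Bernstein-type argument; the case distinction between the sub-Gaussian and the sub-exponential tail is a direct consequence of the admissible range of the dual parameter $\lambda$, rather than a separate probabilistic input.
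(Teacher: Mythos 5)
Your proposal is correct, and it reproduces the standard Chernoff-type argument for Bernstein's inequality for sub-exponential random variables as found in the cited reference (Vershynin's lecture notes, Proposition 5.16). The paper itself states this lemma as a known auxiliary result without proof, so there is no internal proof to compare against; your argument — MGF control via the $\psi_1$-norm characterization, independence, Markov's inequality, and optimizing the dual parameter over its admissible range to split into the sub-Gaussian and sub-exponential regimes — is exactly the canonical derivation.
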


\begin{lemma}\label{lem:vershynin_subexponential_tail}
(Corollary 5.17 in Vershynin \cite{Vershynin}): Let $X_{1}, \cdots, X_{N}$ be
independent centered sub-exponential random variables, and let
$K=\max_{i}\|X_{i}\|_{\psi_1}$ where $\|X\|_{\psi_1}=\sup_{p\geq
1}p^{-1}\left(\mE|X|^p\right)^{1/p}.$ Then for every $\epsilon\geq 0,$ we have
\begin{equation*}
\mP\left(\left|\sum_{i=1}^{N}X_{i}\right|\geq \epsilon N\right)\leq
2\exp\left\{-c\min\left(\frac{\epsilon^2}{K^2},
\frac{\epsilon}{K}\right)N\right\}
\end{equation*}
where $c>0$ is an absolute constant.
\end{lemma}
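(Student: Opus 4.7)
The plan is to prove this by the classical Cram\'er/Chernoff exponential moment method, with the sub-exponential moment generating function (MGF) bound providing the input that controls the optimization and produces the characteristic two-regime ($\epsilon^2/K^2$ vs.~$\epsilon/K$) exponent.

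First, I would establish the one-variable MGF estimate: there exist absolute constants $c_0,C>0$ such that for any centered random variable $X$ with $\|X\|_{\psi_1}\leq K$,
\begin{equation*}
\mathbb{E}[\exp(\lambda X)] \leq \exp(C\lambda^2 K^2), \qquad |\lambda|\leq c_0/K.
\end{equation*}
To derive this, I would expand $e^{\lambda X}$ as a power series, use centering to drop the linear term, and bound $\mathbb{E}|X|^p \leq (pK)^p$ directly from the definition $\|X\|_{\psi_1}=\sup_{p\geq 1}p^{-1}(\mathbb{E}|X|^p)^{1/p}$. Applying Stirling's estimate $p!\geq (p/e)^p$, the tail of the series becomes geometric in $|\lambda|eK$, so choosing $c_0$ so that $c_0 e < 1$ lets the series be dominated by a quadratic in $\lambda K$. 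Next, for any $\lambda\in (0,c_0/K]$, Markov's inequality plus independence yields
\begin{equation*}
\mathbb{P}\Big(\sum_{i=1}^N X_i \geq \epsilon N\Big) \leq e^{-\lambda\epsilon N}\prod_{i=1}^N \mathbb{E}[e^{\lambda X_i}] \leq \exp(-\lambda\epsilon N + CN\lambda^2 K^2).
\end{equation*}

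Second, I would optimize over $\lambda$ in the admissible interval $(0,c_0/K]$. The unconstrained minimizer is $\lambda^\ast=\epsilon/(2CK^2)$, giving exponent $-\epsilon^2 N/(4CK^2)$, and lies in the interval precisely when $\epsilon\leq 2Cc_0 K$; this is the sub-Gaussian regime producing the $\exp(-c\epsilon^2 N/K^2)$ bound. In the complementary regime $\epsilon > 2Cc_0 K$, I would instead take the boundary value $\lambda=c_0/K$, obtaining exponent $N(-c_0\epsilon/K + Cc_0^2)$, which is majorized by $-(c_0/2)\epsilon N/K$ once $\epsilon$ exceeds the threshold (absorbing the constant term into a slightly smaller linear coefficient). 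Combining the two regimes yields the $\min(\epsilon^2/K^2,\epsilon/K)$ behavior. Applying the identical argument to $-X_i$ controls the lower tail, and a union bound produces the prefactor $2$.

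The main technical obstacle I foresee is step one: establishing the MGF bound with the correct quantitative dependence on $\lambda$ and $K$, and in particular showing that $\|\cdot\|_{\psi_1}$ (defined through polynomial moments) is equivalent, up to absolute constants, to the Orlicz/MGF characterization that actually drives Chernoff. This requires careful tracking of absolute constants in the chain moments $\Rightarrow$ MGF $\Rightarrow$ tails, and is the only place where genuine sub-exponentiality (as opposed to, say, bounded variance) enters. Once the MGF estimate is in hand, the Chernoff optimization and the regime split are routine; the structure of the bound — quadratic in $\epsilon$ below a scale of order $K$, linear above — is forced by the fact that the admissible range of $\lambda$ is itself of order $1/K$.
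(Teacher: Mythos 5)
Your proof is correct and is essentially the canonical Chernoff-bound derivation of Bernstein's inequality for sub-exponential variables; it matches the argument in the cited source (Vershynin, Proposition 5.16 together with Corollary 5.17), and the paper itself states this lemma as a citation without reproducing a proof, so there is no internal proof to compare against. The MGF bound $\mathbb{E}[e^{\lambda X}]\leq\exp(C\lambda^2K^2)$ for $|\lambda|\leq c_0/K$ via the power series expansion with $\mathbb{E}|X|^p\leq(pK)^p$ and Stirling, followed by optimization over the constrained interval $(0,c_0/K]$ and the resulting two-regime split, is exactly the intended route, and your bookkeeping of the absolute constants (interior optimum when $\epsilon\leq 2Cc_0K$, boundary point otherwise, union bound for the factor $2$) is accurate.
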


\begin{lemma}\label{lem:hoeffding_type_inequality}
(Hoeffding's inequality : Proposition 5.10 in Vershynin \cite{Vershynin}): Let
$X_{1}, \cdots, X_{N}$ be independent centered sub-gaussian random variables,
and let $K=\max_{i}\|X_{i}\|_{\psi_{2}}$ where $\|X\|_{\psi_2}=\sup_{p\geq
1}p^{-1/2}\left(\mE|X|^p\right)^{1/p}.$ THen for every $a=(a_{1}, \cdots,
a_{N})\in \mR^{N}$ and every $t\geq 0,$ we have
\begin{equation*}
\mP\left(\left|\sum_{i=1}^{N}a_{i}X_{i}\right|\geq t\right)\leq e\cdot
\exp\left\{-\frac{ct^2}{K^2\|a\|_{2}^{2}}\right\}
\end{equation*}
where $c>0$ is an absolute constant.
\end{lemma}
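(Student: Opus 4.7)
The plan is to prove Hoeffding's inequality by the classical Cram\'er--Chernoff exponential moment method, with the only ingredient beyond the standard Chernoff bound being a sub-Gaussian moment generating function estimate. Set $S_N := \sum_{i=1}^N a_i X_i$. The key auxiliary fact I would establish first is the following MGF bound: for any centered random variable $X$ with $\|X\|_{\psi_2} \leq K$, there is a universal constant $c_0 > 0$ such that
$$
\mE[e^{\lambda X}] \leq \exp\{c_0 \lambda^2 K^2\}, \qquad \lambda \in \mR.
$$
This is obtained by Taylor expanding the exponential, applying the moment characterization $\mE|X|^p \leq K^p p^{p/2}$ implied by $\|X\|_{\psi_2} \leq K$, and summing the resulting series using Stirling's estimate $p! \geq (p/e)^p$; the centering kills the $p=1$ term, and the even and odd terms for $p \geq 2$ can be dominated uniformly. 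This is the standard equivalence between sub-Gaussianity defined through moment growth and through exponential integrability (cf.\ Lemma 5.5 in Vershynin \cite{Vershynin}).

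With this MGF estimate in hand, the rest is the textbook Chernoff argument. For any $\lambda > 0$, Markov's inequality applied to $e^{\lambda S_N}$ together with independence and centering of the $X_i$ gives
$$
\mP(S_N \geq t) \leq e^{-\lambda t} \mE[e^{\lambda S_N}] = e^{-\lambda t} \prod_{i=1}^N \mE[e^{\lambda a_i X_i}] \leq \exp\{-\lambda t + c_0 \lambda^2 K^2 \|a\|_2^2\}.
$$
Since $X_i$ sub-Gaussian with norm at most $K$ implies $a_i X_i$ sub-Gaussian with norm at most $|a_i| K$, the product bound is valid. Optimizing the right-hand side in $\lambda$ at $\lambda^\star = t/(2 c_0 K^2 \|a\|_2^2)$ yields the one-sided tail bound $\exp\{-t^2/(4 c_0 K^2 \|a\|_2^2)\}$. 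Applying the same argument to $-S_N = \sum_i (-a_i) X_i$ and taking a union bound produces the two-sided inequality with absolute constant $c := 1/(4 c_0)$; the leading factor $e$ in the stated form absorbs the factor of $2$ from the union bound together with any slack from the optimization.

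There is no real obstacle in this argument. The sub-Gaussian MGF bound is the only nontrivial input and is entirely standard, and once it is available the Cram\'er--Chernoff scheme is routine. If one wished to avoid verifying the MGF bound from scratch, one could alternatively deduce the conclusion directly from Bernstein's inequality (Lemma~\ref{lem:vershynin_berstein_tail}) by observing that a centered sub-Gaussian variable is a fortiori sub-exponential with $\|X\|_{\psi_1} \leq \|X\|_{\psi_2}$, but this route gives a slightly weaker (mixed $t^2$ vs.\ $t$) tail and so is not quite as sharp as the pure Hoeffding form stated here.
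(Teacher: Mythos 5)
The paper does not prove this lemma at all: it is stated as a direct citation to Proposition~5.10 in Vershynin \cite{Vershynin}, with no argument supplied. Your Cram\'er--Chernoff proof is correct and is precisely the standard derivation used in the cited reference (sub-Gaussian MGF bound via the moment characterization, independence to factor the MGF, optimization in $\lambda$, and a union bound for the two-sided tail), so it is a sound self-contained replacement for the black-box citation.
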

%%%%%%%%-----------------------------------------------------------------

\subsection{Bound on $d_{1}$ : proof of (\ref{eq:Y_eh_Y_lambda_bound})}\label{attach_estimation}

This is a direct application of the strategy shown in Section
\ref{subsec:random part}. We will show that
$d_1=\frac{1}{p}\mE\left|tr\left[\left(Y^{-1}(z)-Y_{(k)}^{-1}(z)\right)\Lambda\right]\right|\leq
M/p$. To this end, we repeat the computation in
(\ref{eg:difference_resolvent_deleting_kth_row}). Since
\begin{equation}\label{eg:Y_z_decomposition}
Y(z)=\sqrt{\frac{n}{p}}(VV^*-\bar{b}_n\Lambda)=Y_{(k)}(z)+\omega_{k}e_{k}^{*}+e_{k}\omega_{k}+\tau_{kk}e_{k}e_{k}^{*}.
\end{equation}
Let
$\omega_{k}e_{k}^{*}+e_{k}\omega_{k}=\mbf{u}_{k}\mbf{u}_{k}^{*}-\mbf{v}_{k}\mbf{v}_{k}^{*}$,
where $\mbf{u}_{k}=2^{-1/2}(e_k+\omega_{k})$ and
$\mbf{v}_{k}=2^{-1/2}(e_{k}-\omega_{k})$. Also, define
$D_{1k}=Y_{(k)}+\mbf{u}_{k}\mbf{u}_{k}^{*}$ and
$D_{2k}=D_{1k}-\mbf{v}_{k}\mbf{v}_{k}^{*}$ so that
$D_{1k}=D_{2k}+\mbf{v}_{k}\mbf{v}_{k}^{*}$. Then  from
(\ref{eg:Y_z_decomposition}) we have $Y(z)=D_{2k}+\tau_{kk}e_{k}e_{k}^{*}$.
Therefore,
\begin{eqnarray*}
&&\tr\left[\left(Y^{-1}(z)-Y_{(k)}^{-1}(z)\right)\Lambda\right]\\
&=& \tr\left[\left(Y^{-1}(z)-(D_{2k}-zI)^{-1}\right)\Lambda\right]+ \tr\left[\left((D_{2k}-zI)^{-1}-(D_{1k}-zI)^{-1}\right)\Lambda\right]\\
&& ~~~~+ \tr\left[\left((D_{1k}-zI)^{-1}-Y_{(k)}^{-1}(z)\right)\Lambda\right]\\
&=&
\frac{\tau_{kk}e_{k}^{*}(D_{2k}-zI)^{-1}\Lambda(D_{2k}-zI)^{-1}e_{k}}{1+\tau_{kk}e_{k}^{*}(D_{2k}-zI)^{-1}e_{k}}
+
\frac{\mbf{v}_{k}^{*}(D_{1k}-zI)^{-1}\Lambda(D_{2k}-zI)^{-1}\mbf{v}_{k}}{1+\mbf{v}_{k}^{*}(D_{1k}-zI)^{-1}\mbf{v}_{k}}
+ \frac{\mbf{u}_{k}^{*}Y_{(k)}^{-1}\Lambda
Y_{(k)}^{-1}\mbf{u}_{k}}{1+\mbf{v}_{k}^{*}Y_{(k)}^{-1}(z)\mbf{u}_{k}}.
\end{eqnarray*}
According to (\ref{eq:C_diff}) and Lemma \ref{lemma:quad_bound}, each term
above is bounded by $a_{0}/v$. Thus
\begin{equation}
\frac{1}{p}\mE\left|\tr\left[\left(Y^{-1}(z)-Y^{-1}_{(k)}(z)\right)\Lambda\right]\right|\leq
\frac{3a_{0}}{pv}\leq \frac{M}{p}.
\end{equation}
So we have $d_{1}\leq M/p$.

%%%%%%--------------------------------------------------

\subsection{Bound on $d_{31}$}\label{d_31}

Since
\begin{eqnarray*}
d_{31}&=&\mE\left|-\omega_{k}^{*}Y_{(k)}^{-1}(z)\omega_{k}+\tau_{kk}
+\bar{b}_{2}(n)\frac{\lambda_{k}}{p} \tr(Y^{-1}_{(k)}(z)\Lambda_{(k)})\right|^2\\
&\leq& 2\mE
\left|\tau_{kk}\right|^2+2\mE\left|\omega_{k}^{*}Y_{(k)}^{-1}(z)\omega_{k} -
\bar{b}_{2}(n)\frac{\lambda_{k}}{p}\tr\left[Y_{(k)}^{-1}(z)\Lambda_{(k)}\right]\right|^2,
\end{eqnarray*}
and we already have
\begin{equation}\label{eq:E_tau_kk_bound}
\mE |\tau_{kk}|^2\leq \frac{n}{p}\mE
\left|\frac{\lambda_{k}}{n}\widetilde{x}_{k}^{*}B_{n}\widetilde{x}_{k}-\bar{b}(n)\lambda_{k}\right|^{2}\leq
\frac{M}{p},
\end{equation}
to prove the claim that $d_{31}\leq M/p$, we need a bound on the expected value
of the term
\begin{equation*}
\omega_{k}^{*}Y_{(k)}^{-1}(z)\omega_{k}-\bar{b}_{2}(n)\frac{\lambda_{k}}{p}\tr\left[Y_{(k)}^{-1}(z)\Lambda_{(k)}\right]:=d_{k}^{(2)}
\end{equation*}
defined in (\ref{eq:quadratic term}). Note that
\begin{eqnarray*}
d_k^{(2)}&=& \frac{n}{p}v_{k}^{*}V_{(k)}^{*}Y_{(k)}^{-1}(z)V_{(k)}v_{k}
- \bar{b}_{2}(n)\frac{\lambda_{k}}{p}\tr\left[Y_{(k)}^{-1}(z)\Lambda_{(k)}\right]\\
&=&\frac{\lambda_{k}}{p}tr\left[V_{(k)}B_{n}V_{(k)}^{*}Y_{(k)}^{-1}(z)\right]
- \bar{b}_{2}(n)\frac{\lambda_{k}}{p}\tr\left[Y_{(k)}^{-1}(z)\Lambda_{(k)}\right] + d_{k}^{(1)}\\
&=& \frac{\lambda_{k}}{pn}\sum_{i,j \ne k}(\sqrt{\lambda_{i}\lambda_{j}}\widetilde{x}_{i}^{*}B_{n}^{2}\widetilde{x}_{j})
(Y_{(k)}^{-1}(z))_{ji}-\bar{b}_{2}(n)\frac{\lambda_{k}}{p}\tr\left[Y_{(k)}^{-1}(z)\Lambda_{(k)}\right]+d_{k}^{(1)}\\
&=&\frac{\lambda_{k}}{p}\sum_{i\ne k} \lambda_{i}\left(\frac{1}{n}\widetilde{x}_{i}^{*}B_{n}^{2}\widetilde{x}_{i}
-\bar{b}_{2}(n)\right)(Y_{(k)}^{-1}(z))_{ii}+\frac{\lambda_{k}}{p}\sum_{i\ne j\ne k}\sqrt{\lambda_{i}\lambda_{j}}
\frac{1}{n}\widetilde{x}_{i}^{*}B_{n}^{2}\widetilde{x}_{j}(Y_{(k)}^{-1}(z))_{ji}+d_{k}^{(1)}\\
&:=& d_{k}^{(3)}+d_{k}^{(4)}+d_{k}^{(1)}.
\end{eqnarray*}
In order to show $\mE |d_{k}^{(2)}|^2\leq M/p$, we need to derive corresponding
bounds on $\mE|d_{k}^{(3)}+d_{k}^{(4)}|^2$ and $\mE|d_{k}^{(1)}|^2$. Using
Lemma \ref{lemma:moments_of_quadratic_forms}, we have that for any $q \geq 2$
\begin{eqnarray}\label{eg:bound_expectation_quadra}
\mE \left|\frac{1}{n}\widetilde{x}_{i}^{*}B_{n}^{2}\widetilde{x}_{i}
-\bar{b}_{2}(n)\right|^{q}&\leq& C_{q} \left(3^{q/2}(n^{-2}\tr(B_{n}^4))^{q/2}
+ \nu_{2q}n^{-q}\sum_{i=1}^{n}\lambda_{i}(B_n)^{2q}\right)\nonumber\\
&\leq& \frac{C_{q}^{'}}{n^{q}}\left((tr(B_n^{4}))^{q/2}+\tr(B_{n}^{2q})\right)\nonumber\\
&\leq& \frac{C_{q}^{'}}{n^{q}}\left(b_{0}^{q}(n\bar{b}_{2}(n))^{q/2}+b_{0}^{2q-2}(n\bar{b}_2(n))\right)\nonumber\\
&\leq&
C_{q}^{'}\left(\frac{b_{0}^{q}(\bar{b}_{2}(n))^{q/2}}{n^{q/2}}+\frac{b_{0}^{2q-2}\bar{b}_2(n)}{n^{q-1}}\right)\leq
\frac{M}{n^{q/2}}.
\end{eqnarray}
Thus, taking $q=2$ in (\ref{eg:bound_expectation_quadra}) and using
Cauchy-Schwarz inequality, we have
\begin{eqnarray}
\mE \left|d_{k}^{(3)}+d_{k}^{(4)}\right|^2
&=& \mE \left|\frac{\lambda_{k}}{p}\tr\left[\left(V_{(k)}B_{n}V_{(k)}^{*}-\bar{b}_{2}(n)\Lambda_{(k)}\right)
Y_{(k)}^{-1}(z)\right]\right|^2\nonumber\\
&=&\frac{\lambda_{k}^2}{p^2}\mE \left|\tr\left[\left(\frac{1}{n}\widetilde{X}_{(k)}B_{n}^{2}\widetilde{X}_{(k)}^{*}
-\bar{b}_2(n)I_{(k)}\right)\left(\Lambda_{(k)}^{1/2}Y_{(k)}^{-1}(z)\Lambda_{(k)}^{1/2}\right)\right]\right|^2\nonumber\\
%&\leq& \frac{a_{0}^{2}}{p^2}\mE \left|\tr\left[\left(V_{(k)}B_{n}V_{(k)}^{*}-\bar{b}_{2}(n)\Lambda_{(k)}\right)\Lambda_{(k)}Y_{(k)}^{-1}(z)\Lambda_{(k)}\right]\right|^2\nonumber\\
&\leq& \frac{a_{0}^2}{p^2} \mE \left[\tr\left(\frac{1}{n}\widetilde{X}_{(k)}B_{n}^{2}\widetilde{X}_{(k)}^{*}
-\bar{b}_2(n)I_{(k)}\right)^2\tr\left(\Lambda_{(k)}^{1/2}Y_{(k)}^{-1}\Lambda_{(k)}Y_{(k)}^{-1}(\bar{z})\Lambda_{(k)}^{1/2}\right)\right]\nonumber\\
&\leq& \frac{a_{0}^4}{pv^2}\mE
\tr\left(\frac{1}{n}\widetilde{X}_{(k)}B_{n}^{2}\widetilde{X}_{(k)}^{*}-\bar{b}_2(n)\Lambda_{(k)}\right)^2,
\end{eqnarray}
where $I_{(k)}=I-e_{k}e_{k}^{*}$. Indeed,
\begin{eqnarray}
\mE\tr\left(\frac{1}{n}\widetilde{X}_{(k)}B_{n}^{2}\widetilde{X}_{(k)}^{*}
-\bar{b}_2(n)\Lambda_{(k)}\right)^2 &=&\sum_{i\ne k} \mE \left(\frac{1}{n}\widetilde{x}_{i}^{*}B_{n}\widetilde{x}_{i}
-\bar{b}_{2}(n)\right)^2+\sum_{i \ne j\ne k}\mE \left(\frac{1}{n}\widetilde{x}_{i}^{*}B_{n}\widetilde{x}_{j}\right)^2\nonumber\\
&\leq& (p-1)\frac{M_{1}}{n}+(p-1)(p-2)\frac{M_2}{n}.
\end{eqnarray}
Then we have
\begin{equation}\label{eg:bound_on_d_3_plus_d_4}
\mE\left|d_{k}^{(3)}+d_{k}^{(4)}\right|^2\leq
\frac{a_{0}^2}{pv^2}\left((p-1)\frac{M_{1}}{n}+(p-1)(p-2)\frac{M_2}{n}\right)\leq
\frac{pM}{n}
\end{equation}
which goes to zero as $n\to \infty$. Next, we show that $\mE
|d_{k}^{(1)}|^2\leq M/p$. Since
\begin{eqnarray}
d_{k}^{(1)}&=&\frac{n}{p}v_{k}^{*}V_{(k)}^{*}Y_{(k)}^{-1}(z)V_{(k)}v_{k}
-\frac{\lambda_{k}}{p}tr\left[V_{(k)}B_{n}V_{(k)}^{*}Y_{(k)}^{-1}(z)\right]\nonumber\\
&=&
\frac{\lambda_{k}}{n}\widetilde{x}_{k}^{*}Q_n(z)\widetilde{x}_{k}-\tr(Q_n(z))
\end{eqnarray}
where $Q_n(z):=B_{n}^{1/2}V_{(k)}^{*}Y_{(k)}^{-1}(z)V_{(k)}B_{n}^{1/2}$, we get
\begin{eqnarray}\label{eg:bound_on_d_1}
\mE|d_{k}^{(1)}|^2
&=& \frac{\lambda_{k}^2}{p^2}\mE
\left[\mE\left(\left|(\widetilde{x}_{k}^{*}Q_n(z)\widetilde{x}_{k}
-\tr(Q_n(z)\right|^2\big| \widetilde{X}_{(k)}\right)\right]\nonumber\\
&\leq& C\frac{a_0^{2}}{p^2} \mE\tr\left(Q_{n}(z)Q_{n}(z)^{*}\right)\nonumber\\
&\leq& \frac{C^{'}}{p}\mE \|Q_n(z)\|^2\nonumber\\
&\leq& \frac{M}{p}.
\end{eqnarray}
The last inequality holds due to the fact that under Gaussianity, we have
\begin{eqnarray*}
\|Q_{n}(z)\|^2 \leq
\|B_n\|^2\|n^{-1}\widetilde{X}_{(k)}\Lambda\widetilde{X}_{(k)}\|\|Y_{(k)}^{-1}(z)\|\leq
\frac{b_{0}^2a_{0}}{v}\|n^{-1}\widetilde{X}_{(k)}\widetilde{X}_{(k)}^{*}\|\leq
\frac{b_{0}^2a_{0}}{v}\|n^{-1}\widetilde{X}\widetilde{X}^{*}\|
\end{eqnarray*}
so that
\begin{equation*}
\mE \|Q_{n}(z)\|^2\leq
\frac{b_{0}^2a_{0}}{v}\mE\|n^{-1}\widetilde{X}\widetilde{X}^{*}\|\leq
\frac{b_{0}^2a_{0}}{v}(1+c\sqrt{p/n})\leq M.
\end{equation*}
Therefore, combining (\ref{eg:bound_on_d_3_plus_d_4}) and
(\ref{eg:bound_on_d_1}) we derive that $\mE |d_{k}^{(2)}|\leq M/p$. This,
together with (\ref{eq:E_tau_kk_bound}), implies that $d_{31}\leq M/p$.

%%%-------------------------------------------------------------------------------------
\subsection{Bound on $d_{32}$}\label{d_32}
Denote by $\mE_{j}(\cdot)=\mE(\cdot|\mathcal{F}_{j})$ the conditional
expectation with respect to the $\sigma$-field generated by the first $j$ rows
of $X_{n}=(\mbf{x}_1^*, \cdots, \mbf{x}_{p}^*)^{*}$ except for
$\mbf{x}_{k}^{*}$, say, $\mathcal{F}_{j}=\sigma(\{\mbf{x}_l:1\leq l \leq j,
l\neq k\})$. Let $X_{(k,j)}=X_{(k)}-e_{j}\mbf{x}_{j}^{*}$, where $e_j$ denotes
the vector in $\mathbb{R}^p$ with 1 in $j$-th coordinate and zero elsewhere.
Then,
\begin{equation*}
\tr\left[Y^{-1}_{(k)}(z)\Lambda_{(k)}\right]-\mE
\tr\left[Y^{-1}_{(k)}(z)\Lambda_{(k)}\right]=\sum_{j\ne
k}^{p}\left[\mE_{j}\tr\left(Y_{(k)}^{-1}(z)\Lambda_{(k)}\right)-\mE_{j-1}\tr\left(Y_{(k)}^{-1}(z)\Lambda_{(k)}\right)\right]:=\sum_{
j\ne k}^{p}\gamma_{j}
\end{equation*}
where $\{\gamma_{j}\}$ forms a martingale difference sequence and can be
written as
\begin{eqnarray}\label{eq:martingale_gamma_j_expression}
\gamma_{j}&=&\mE_{j}\tr\left(Y_{(k)}^{-1}(z)\Lambda_{(k)}\right)-\mE_{j-1}\tr\left(Y_{(k)}^{-1}(z)\Lambda_{(k)}\right)\nonumber\\
&=& \mE_{j}\tr\left(Y_{(k)}^{-1}(z)\Lambda_{(k)}\right)-\mE_{j}\tr\left(Y_{(k,j)}^{-1}(z)\Lambda_{(k)}\right)\nonumber\\
&+& \mE_{j-1}\tr\left(Y_{(k,j)}^{-1}(z)\Lambda_{(k)}\right)-\mE_{j-1}\tr\left(Y_{(k)}^{-1}(z)\Lambda_{(k)}\right)\nonumber\\
&=&(\mE_{j}-\mE_{j-1})\left[\tr\left(Y_{(k)}^{-1}(z)\Lambda_{(k)}\right)-\tr\left(Y_{(k,j)}^{-1}(z)\Lambda_{(k)}\right)\right].
\end{eqnarray}
The second equality above holds because of the fact that
$$\mE_{j}\tr\left(Y_{(k,j)}^{-1}(z)\Lambda_{(k)}\right)=\mE_{j-1}\tr\left(Y_{(k,j)}^{-1}(z)\Lambda_{(k)}\right).$$
Thus, by Lemma \ref{lemma:quad_bound} we get
$$
\left|\tr\left(Y_{(k)}^{-1}(z)\Lambda_{(k)}\right)-\tr\left(Y_{(k,j)}^{-1}(z)\Lambda_{(k)}\right)\right|\leq
\frac{3a_{0}}{v}
$$
and hence $|\gamma_{j}|\leq 6a_{0}/v$ by
(\ref{eq:martingale_gamma_j_expression}). Applying Burkh\"{o}lder inequality
(Lemma \ref{eg:burkholder_inequality}), we have
\begin{eqnarray*}
d_{32}&=&\mE\left|\bar{b}_{2}(n)\frac{\lambda_{k}}{p}\tr\left[Y^{-1}_{(k)}(z)\Lambda_{(k)}\right]
- \bar{b}_{2}(n)\frac{\lambda_{k}}{p}\mE \tr\left[Y^{-1}_{(k)}(z)\Lambda_{(k)}\right]\right|^2\\
&=&\frac{(\bar{b}_{2}(n)\lambda_{k})^2}{p^2}\mE\left|\sum_{j\ne k}\gamma_{j}\right|^2\\
&\leq& \frac{K}{p^2}\mE\left(\sum_{j\ne k} |\gamma_{j}|^2\right)^{1/2}\\
&\leq& \frac{K}{p^2v^2}\left[\frac{36(p-1)a_{p}^2}{v^2}\right]^{1/2}\leq
\frac{M}{p^{3/2}}.
\end{eqnarray*}

%%%%-----------------------------------------------------------------------------------
\subsection{Calculation on extension to non-Gaussian case}
Since
\begin{eqnarray*}
\frac{\p C_{n}}{\p X_{ij}}&=&\frac{1}{\sqrt{np}}\lim_{\epsilon\to 0}
\frac{\left(A_{p}^{1/2}(X_n+\epsilon \bigtriangleup_{ij})B_n(X_n+\epsilon\bigtriangleup_{ij})^{*}A_{p}^{1/2}-tr(B_n)A_{p}\right)-\left(A_{p}^{1/2}X_nB_nX_n^{*}A_{p}^{1/2}-tr(B_n)A_{p}\right)}{\epsilon} \\
&=&\frac{1}{\sqrt{np}}\left(A_{p}^{1/2}X_{n}B_{n}\bigtriangleup_{ji}A_{p}^{1/2}+A_{p}^{1/2}\bigtriangleup_{ij}B_{n}X_{n}^{*}A_{p}^{1/2}\right)\\
&=&
\frac{1}{\sqrt{np}}\left(A_{p}^{1/2}X_{n}B_{n}\widetilde{e}_{j}e_{i}^{T}A_{p}^{1/2}
+ A_{p}^{1/2}e_{i}\widetilde{e}_{j}^{T}B_{n}X_{n}^{*}A_{p}^{1/2}\right),
\end{eqnarray*}
in which $e_{i}$ is a $p\times 1$ unit vector with 1 in $i$-th coordinate and
$\widetilde{e}_{i}$ is a $n\times 1$ unit vector with 1 in $i$-th coordinate.
%and $\bigtriangleup_{ij}$ is an indicator matrix with the $(i,j)$ entry equals
%1 and all other entries equal 0.
Let $r_j = A_{p}^{1/2} X_{n} B_{n} \widetilde{e}_{j}$ and $\xi_{i} =
A_{p}^{1/2} e_{i}$. Then
\begin{equation*}
\frac{\p C_{n}}{\p X_{ij}} =
\frac{1}{\sqrt{np}}\left(r_{j}e_{i}^{T}A_{p}^{1/2}+
A_{p}^{1/2}e_{i}r_{j}^{*}\right) =
\frac{1}{\sqrt{np}}\left(r_{j}\xi_{i}^{*}+\xi_{i} r_{j}^{*}\right),
\end{equation*}
\begin{equation*}
\frac{\p^{2}C_{n}}{\p
X_{ij}^{2}}=\frac{2}{\sqrt{np}}b_{jj}A_{p}^{1/2}e_{i}e_{i}^{T}A_{p}^{1/2}=\frac{2}{\sqrt{np}}b_{jj}\xi_{i}\xi_{i}^{*},
\qquad \frac{\p^{3}C_{n}}{\p X_{ij}^{3}}=0,
\end{equation*}
\begin{equation*}
\frac{\p^2 G_{n}}{\p X_{ij}^{2}} = -\frac{\p^2 C_{n}}{\p
X_{ij}^2}G^{2}_{n}+2\frac{\p C_{n}}{\p X_{ij}}G_{n}\frac{\p C_{n}}{\p
X_{ij}}G^{2}_{n}= \frac{2}{\sqrt{np}}b_{jj}\xi_{i}\xi_{i}^{*}G_n^2,
\end{equation*}
\begin{eqnarray*}
\frac{\p^3 G_{n}}{\p X_{ij}^{3}} &=& -\frac{\p^3 C_{n}}{\p X^3_{ij}}G_{n}^{2}(z)+2\frac{\p^2 C_{n}}{\p X_{ij}^2}G_{n}\frac{\p C_{n}}{\p X_{ij}}G_{n}^2+2\frac{\p^2 C_{n}}{\p X^2_{ij}}G_{n}\frac{\p C_{n}}{\p X_{ij}}G^{2}_{n}\\
&-& 2\frac{\p C_{n}}{\p X_{ij}}\frac{\p C_{n}}{\p X_{ij}}G^{2}_{n}\frac{\p C_{n}}{\p X_{ij}}G^{2}_{n}+2\frac{\p C_{n}}{\p X_{ij}}G_{n}\frac{\p^2 C_{n}}{\p X_{ij}^2}G^{2}_{n}- 4 \frac{\p C_{n}}{\p X_{ij}}G_{n}\frac{\p C_{n}}{\p X_{ij}}G_{n}\frac{\p C_{n}}{\p X_{ij}}G^{2}_{n}\\
&=& 4\frac{\p^2 C_{n}}{\p X^2_{ij}}G_{n}\frac{\p C_{n}}{\p X_{ij}}G^{2}_{n}-2\frac{\p C_{n}}{\p X_{ij}}\frac{\p C_{n}}{\p X_{ij}}G^{2}_{n}\frac{\p C_{n}}{\p X_{ij}}G^{2}_{n}\\
&& + 2\frac{\p C_{n}}{\p X_{ij}}G_{n}\frac{\p^2 C_{n}}{\p X_{ij}^2}G^{2}_{n}- 4
\frac{\p C_{n}}{\p X_{ij}}G_{n}\frac{\p C_{n}}{\p X_{ij}}G_{n}\frac{\p
C_{n}}{\p X_{ij}}G^{2}_{n}.
\end{eqnarray*}
So we get
\begin{equation*}
\frac{1}{p}\tr\left[\frac{\p^3 G_{n}}{\p X_{ij}^{3}} \right]=
\frac{6}{p}\tr\left[\frac{\p C_{n}}{\p X_{ij}}G_{n}\frac{\p^2 C_{n}}{\p
X_{ij}^{2}}G_{n}^{2}\right]-\frac{6}{p}\tr\left[\frac{\p C_{n}}{\p
X_{ij}}G_{n}\frac{\p C_{n}}{\p X_{ij}}G_{n}\frac{\p C_{n}}{\p
X_{ij}}G_{n}^2\right]
\end{equation*}
where
\begin{equation*}
\frac{1}{p}\tr\left[\frac{\p C_{n}}{\p X_{ij}}G_{n}\frac{\p^2 C_{n}}{\p
X_{ij}^{2}}G_{n}^{2}\right]
=\frac{12b_{jj}}{np^2}\tr\left[\xi_{i}^{*}G^{2}_n(z)(r_{j}\xi_{i}^{*}+\xi_{i}r_{j}^{*})G_{n}\xi_{i}\right]
\end{equation*}
and
\begin{eqnarray*}
\frac{1}{p}\tr\left[\frac{\p C_{n}}{\p X_{ij}}G_{n}\frac{\p C_{n}}
{\p X_{ij}}G_{n}\frac{\p C_{n}}{\p X_{ij}}G^{2}_{n}\right]
&=&
\frac{1}{n^{3/2}p^{5/2}} \tr\left[(r_{j}\xi_{i}^{*} + \xi_{i}r_{j}^{*})G_{n}(r_{j}\xi_{i}^{*}
+ \xi_{i}r_{j}^{*})G_{n}(r_{j}\xi_{i}^{*}+\xi_{i}r_{j}^{*})G_{n}^{2}\right]\\
&:=& 2\eta_{3}(n)+2\eta_{4}(n)+2\eta_{5}(n)+2\eta_{6}(n)
\end{eqnarray*}
where
\begin{eqnarray*}
\eta_{3}(n)&=& \frac{1}{n^{3/2}p^{5/2}}\left[(r_{j}^{*}G_{n}\xi_{i})^2r_{j}^{*}G_{n}^{2}\xi_{i}\right]\\
\eta_{4}(n) &=& \frac{1}{n^{3/2}p^{5/2}}\left[r_{j}^{*}G_{n}\xi_{i}r_{j}^{*}G_{n}r_{j}\xi_{i}^{*}G_{n}^{2}\xi_{i}\right]\\
\eta_{5}(n) &=&  \frac{1}{n^{3/2}p^{5/2}}\left[r_{j}^{*}G_{n}r_{j}\xi_{i}^{*}G_{n}\xi_{i}r_{j}^{*}G^{2}_{n}\xi_{i}\right]\\
\eta_{6}(n)&=&
\frac{1}{n^{3/2}p^{5/2}}\left[\xi_{i}^{*}G_{n}\xi_{i}r_{j}^{*}G_{n}\xi_{i}r_{j}^{*}G^{2}_{n}r_{j}\right].
\end{eqnarray*}

\subsection{Proof of Lemma \ref{lem:estimation_r}}

%\begin{proof}
Let $B_{\cdot j}=B_{n}\widetilde{e}_{j}=(b_{1}, \cdots, b_{n})^{T}$ (for
brevity, dropping index $j$ on the right) and $M_{n}=A_{p}^{1/2}X_{n}$. Since
$r_{j}= M_{n}B_{\cdot j}$, where $\mE M_{ij}=0$ and
$\mE|M_{ij}|^{2}=(A_{p})_{ii}$, we have
\begin{equation*}
\mE\left(\|r_{j}\|^2\right)^{k}=\mE \left(B_{\cdot j}^{*}M_{n}^{*}M_{n}B_{\cdot
j}\right)^{k} = \mE
\left(\sum_{i=1}^{p}\left[\sum_{k=1}^{n}M_{ik}b_{k}\right]^2\right)^{k} = \mE
\left(\sum_{i=1}^{p}N_{i}^{2}\right)^{k},
\end{equation*}
where $N_{i}$, $i=1, \cdots p$, are independent, sub-Gaussian random variables
with $\mE N_{i}=0$ and $\mE N_{i}^{2}=(A_{p})_{ii}\|B_{\cdot j}\|^2$. Then we
have
\begin{equation*}
\mE
\left(\sum_{i=1}^{p}N_{i}^2\right)^{k}=\mE\left(\sum_{i=1}^{p}(N_{i}^{2}-\mE
N_{i}^2)+ \|B_{\cdot j}\|^2 \tr(A_{p})\right)^{k},
\end{equation*}
where $N^2_{i}-\mE N^2_{i}$ is a mean zero sub-exponential random variable.
Thus,
\begin{equation*}
\frac{1}{p^k}\mE
\left(\sum_{i=1}^{p}N_{i}^2\right)^{k}=\mE\left(\frac{1}{p}\sum_{i=1}^{p}(N_{i}^{2}-\mE
N_{i}^2)+\|B_{\cdot j}\|^2 \frac{\tr(A_{p})}{p}\right)^{k}=O(1).
\end{equation*}
The term $\|B_{\cdot j}\|^2 \tr(A_{p})/p\leq a_{0}b^{2}_{0} = O(1)$. On the
other hand, $\frac{1}{p}\sum_{i=1}^{p}(N_{i}^{2}-\mE N^{2}_{i})$ is the average
independent sub-exponential random variables with mean zero and uniformly
bounded sub-exponential norm (can be verified). So by Bernstein's inequality
(Lemma \ref{lem:vershynin_berstein_tail}), the tail probability can be
controlled adequately so that
$\mathbb{E}(\frac{1}{p}\sum_{i=1}^{p}(N_{i}^{2}-\mE N^{2}_{i}))^k = O(1)$ for
any $k \geq 1$. Hence (\ref{r}) holds.
%\end{proof}

%%%%%%%%%%%--------------------------------------------

\end{document}